\documentclass[11pt, english]{article}
\usepackage[T1]{fontenc}
\usepackage[latin9]{inputenc}
\usepackage{geometry}
\usepackage{array}
\usepackage{float}
\usepackage{mathrsfs}
\usepackage{amsmath,amsfonts,amssymb,mathtools}
\usepackage{xcolor,tabularx,nccmath}
\usepackage{amsthm,enumitem}
\usepackage{booktabs,mathtools,multirow}
\usepackage{subcaption}
\usepackage{rotating}
\PassOptionsToPackage{normalem}{ulem}
\usepackage{rotating}
\usepackage{makecell}
\usepackage{amsmath}        
\usepackage{amsfonts}       
\usepackage{algorithm}      
\usepackage{algorithmicx}   
\usepackage{algpseudocode}  
\usepackage{comment}
\usepackage{algorithm}
\usepackage{algpseudocode}

\newtheorem{remark}{Remark}

\usepackage[round]{natbib}
\setcitestyle{aysep={},citesep={, }}
\usepackage{hyperref}
\usepackage{todonotes}
\usepackage{setspace}
\usepackage{adjustbox}
\usepackage{xurl}      
\usepackage{csquotes}
\usepackage{authblk}

\newtheorem{prop}{Proposition}


\title{\vspace{-50pt}\textbf{Designing Hierarchical Hub-and-Spoke Drone-Based Networks for Delivering Time-Sensitive Healthcare Items}}

\author[1]{Sayed Hamid Hosseini Dolatabadi}
\author[1,*]{Tanveer Hossain Bhuiyan}
\author[2]{Bo Zeng}
\author[3]{J. Tyler Kennedy}
\author[4]{Brian O' Neill}

\affil[1]{Department of Mechanical, Aerospace, and Industrial Engineering, The University of Texas at San Antonio, TX, USA.}
\affil[2]{Department of Industrial Engineering, University of Pittsburgh, PA, USA.}
\affil[3]{Interpath Laboratory, Inc., Pendleton, OR, USA.}
\affil[4]{San Antonio Fire Department, TX, USA.}

\affil[*]{Corresponding author: \href{mailto:tanveer.bhuiyan@utsa.edu}{tanveer.bhuiyan@utsa.edu};
Contributing authors: \href{mailto:sayedhamid.hosseinidolatabadi@utsa.edu}{sayedhamid.hosseinidolatabadi@utsa.edu}; \href{mailto:bzeng@pitt.edu}{bzeng@pitt.edu}; \href{mailto:TylerKennedy@adaugeohealthcare.com}{TylerKennedy@adaugeohealthcare.com}; \href{mailto:brian.o'neill@sanantonio.gov}{brian.o'neill@sanantonio.gov}
}

\usepackage{babel}

\begin{document}
\setstretch{1.15}
\date{}
\maketitle

\begin{abstract}

Timely distribution of medical items (e.g., whole blood and vaccines) across regional healthcare networks often requires high-volume delivery operations from central facilities (e.g., blood banks) to intermediate regional hospitals, followed by rapid last-mile deliveries to points of injury. Traditional ground-based delivery systems often suffer from limited responsiveness (e.g., traffic congestion) and operational inefficiencies (e.g., blood waste). Leveraging aerial-drone-based delivery systems offers a promising solution for the fast and efficient delivery of time-sensitive medical items across regional healthcare networks. Therefore, we study a hierarchical hub-and-spoke drone-based network for delivering time-sensitive medical items with distinct release and due times to fixed and mobile delivery destinations. We consider a heterogeneous fleet of drones with distinct characteristics (e.g., cost, battery capacity, and speed) and different multi-trip delivery modes. We propose an efficient mixed-integer programming model for location/allocation of mobile delivery destinations, as well as routing and scheduling drones to minimize the total investment and operational costs of the drone delivery network while maintaining the delivery due times. We develop a customized exact solution method integrating problem-specific reformulations and dynamic cutting planes, as well as a fast heuristic algorithm by leveraging a simplified problem variant. Results based on a real-life case study of whole blood delivery data from Pendleton, Oregon, United States, and actual drone flight test data demonstrate that our exact and heuristic solution methods are 31 and 3,375 times faster, respectively, than the Gurobi solver. Results also show that allowing drones to perform multiple trips is 142.8\% more cost-efficient than single trips.
\end{abstract}

\textbf{Keywords: } Heterogeneous drone fleet, Two-echelon vehicle routing, Release and due times, Mixed-integer linear programming, Dynamic cutting plane.

\vspace{-10pt}
\section{Introduction}

\vspace{-5pt}
\subsection{Motivation}

The rapid expansion of medical logistics, the increasing frequency of disasters, and the persistent limitations in healthcare accessibility across rural regions, have revealed critical limitations in conventional delivery systems~\citep{palmer2024unique, koshta2022evaluating, national2022building}. Time-sensitive medical products (e.g., whole blood, blood products, and emergency medications) typically require delivery within strict time windows---defined by release times and patient-dependent due times~\citep{AmericaBlood2022, FDA2025EmergencyDevices, gentili2022locating}. Traditional ground transportation often fails to guarantee such responsiveness, particularly in regions with limited infrastructure, road barriers, or congestion~\citep{varela2019transportation, eisner2024transportation}.
Unmanned aerial vehicles, or aerial drones, have emerged as promising alternatives for last-mile delivery of time-sensitive medical items due to their ability to bypass ground congestion, reach hard-to-access communities, and maintain predictable travel times~\citep{kellermann2020drones, mohsan2023unmanned}. Real-world deployments have demonstrated the effectiveness of drones in critical healthcare logistics~\citep{karsten2018drones}. Leading companies, such as Wingcopter and Zipline, have implemented large-scale medical drone delivery networks across several countries~\citep{wingcopter2020vanuatu, ackerman2019blood, jairoun2025evolution, flyzipline_website}. For instance, Zipline's logistics network in Rwanda delivers more than 35\% of the country's blood supply~\citep{Simons_Rwanda}. Accounting for the critical role of drones in the healthcare logistics, the global medical drone delivery sector was valued at \$245.4 million in 2023, and is projected to reach nearly \$1.9 billion by 2032 with more than 808 million drone deliveries until 2034 \citep{jairoun2025evolution, pwc2024drone}.

Despite these benefits and real-world implications, the limited flight range and battery capacity of drones limit their applicability in wide geographical regions~\citep{hosseini2026branch}. To mitigate this limitation and expand the coverage of drone-based delivery logistics, three main strategies have been proposed: ($i$) synchronized and unsynchronized drone-truck multimodal logistics (e.g., \citealt{murray2015flying, li2021ground}), ($ii$) intermediate battery recharging or swapping stations (e.g., \citealt{hong2018range, cokyasar2021optimization}), and ($iii$) hub-and-spoke (H\&S) drone delivery network designs (e.g., \citealt{gao2022optimizing, hu2024drone}). Although drone-truck logistics enable trucks to reach locations outside the drones' coverage~\citep{mirzapour2024optimizing, moshref2021comparative}, the integration of trucks reduces the autonomy of drones~\citep{pinto2022point, murray2015flying}. Intermediate battery recharging or swapping stations enable drones to make frequent stops to recharge/swap their batteries~\citep{hong2018range, cokyasar2021designing}. Although this offers a viable solution to extend the operational range of drones, placing intermediate stations in rural areas often poses significant logistical complexities, such as limited grid connectivity and the lack of accessible installation sites~\citep{hassan2022charging}. Moreover, frequent stops and long queues at intermediate stations further delay the delivery operations. This time delay can cause perishable medical items (e.g., whole blood, emergency medications, and transplant organs) to be damaged, and may lead to patient death in case of an immediate delivery requirement to a point of injury in mass shootings~\citep{glick2022case}.

In contrast, an H\&S drone delivery network, comprising a central depot, regional hospitals, and clinics, provides an effective solution for time-sensitive healthcare logistics, especially when perishable medical items with limited shelf life need to be transported across wide regional healthcare networks. Medical items often vary in package weight, release time at the distribution center (i.e., central depot), final delivery destination, and delivery due time. A hierarchical (i.e., multi-layer) H\&S structure allows heterogeneous medical items to be first transported in bulk shipments from a central depot to intermediate logistics hubs (e.g., large regional hospitals), using drones with large payloads and battery capacities. The hubs serve as regional intermediate locations, where medical items are consolidated, temporarily stored in clinically acceptable conditions, and prepared for secondary transshipment to the final delivery locations. In healthcare logistics, final delivery destinations are often points of injury, rural clinics, and temporary triage zones, where patients require urgent medical attention. Therefore, the secondary distribution (i.e., distribution from the logistics hubs to the final delivery destinations) in the H\&S structure utilizes small high-speed drones to perform the last-mile delivery of medical items within the specified time windows.

To fully leverage the autonomy of drones in a hierarchical H\&S network, an efficient last-mile delivery necessitates utilizing a heterogeneous drone fleet. In contrast to a homogeneous fleet, a heterogeneous fleet of drones with distinct characteristics---cost, battery capacity, power consumption, package weight carrying capacity (PWCC), and speed---provides a greater operational flexibility to meet the varying requirements (e.g., package weight and time window) of the medical items. Smaller drones can be deployed for lightweight medical items with tighter time windows, whereas larger drones with extended range and larger PWCC are better suited for heavier packages (i.e., medical items) that need to be transported over longer distances.

Furthermore, safe and efficient operation of drones requires ($i$) continuously tracking their battery energy consumption, accounting for practical factors (e.g., package weight, flight distance, and drone type); and ($ii$) efficiently managing their battery swapping. For drones with larger battery capacities, battery swapping can be performed exogenously at each return to the origin (i.e., central depot or hubs). In contrast, for smaller drones that perform more frequent, shorter-distance delivery operations, the decision to swap the drone battery is made endogenously (i.e., only when needed).
Moreover, due to the dynamic nature of emergency medical response, final delivery locations are often mobile (e.g., ambulances) or reallocated frequently according to real-time incident locations. Delivering medical items to these mobile destinations requires inter-hub and intra-hub coordination, where hubs serve as fixed regional points for consolidation, and last-mile delivery operations dynamically adapt to serve mobile destinations.

Despite the benefits of the hierarchical H\&S networks, to the best of our knowledge, there is a lack of studies on designing hierarchical H\&S drone delivery networks for healthcare logistics, accounting for all the aforementioned practical aspects. Moreover, there is a critical need for fast and exact solution methods to provide accurate and practically usable healthcare logistics solutions. However, developing efficient exact solution methods with tailored enhancement strategies (e.g., problem-specific reformulations and dynamic cutting planes) to tighten the underlying LP-relaxation of this inherently complex problem has not received considerable attention. Therefore, this study aims to develop efficient models and exact solution methods for routing and scheduling a heterogeneous drone fleet within a hierarchical H\&S network to deliver time-sensitive medical items to fixed and mobile delivery destinations.

\vspace{-5pt}
\subsection{Related Literature}

The existing literature relevant to our study falls into four key categories, including (1) the hub-and-spoke (H\&S) networks in transportation logistics~\citep{li2025hierarchical, sun2024multi, lin2008integral}, (2) the multi-echelon vehicle routing problems (VRPs)~\citep{santos2015branch, dellaert2021multi, marques2022branch}, (3) the drone delivery network designs~\citep{pinto2020network, jiu2024benders, hu2024drone}, and (4) the drone-based healthcare logistics ~\citep{enayati2023multimodal, kar2024optimal, gentili2022locating}. 

The H\&S network structures form the foundation of the hierarchical logistics and transportation systems~\citep{unisHS}. The early study by~\citet{chou1990hierarchical} introduced the pioneering hierarchical hub model that used a simplified shortest path algorithm to determine the optimal number and arrangement of hubs in multi-tiered airline networks. \citet{lin2004hierarchical} leveraged the hierarchical network design for time-definite express common carriers. They developed a binary programming model, solved using an implicit enumeration algorithm (IEA), to determine the optimal fleet size and scheduling decisions across primary (inter-hub) and secondary (intra-hub) routes. \citet{thomadsen2005hierarchical} further advanced the hierarchical network design through a two-layer ring network model in telecommunications, solved using a branch-and-price (B\&P) algorithm. \citet{lin2008integral} extended this line of research by formulating an integer programming model, solved using IEA, for time-definite freight carriers that minimized total operating costs over a generalized H\&S structure. \citet{correia2011hub} integrated hub capacity decisions in a hierarchical hub location problem, formulated as a mixed-integer linear programming (MILP) model and solved using CPLEX. Furthermore, \citet{carlsson2013euclidean} investigated the H\&S networks in a continuous Euclidean space and proved that the Euclidean H\&S networks in a continuous space can be reduced to a generalized discrete $p$-median problem. \citet{dukkanci2017routing} studied a hierarchical hub location problem with a ring-star-star (RSS) network structure, solved using a subgradient-based heuristic algorithm. \citet{shang2020stochastic} also adopted this RSS network structure in a stochastic hierarchical multimodal hub location problem for cargo delivery systems. They formulated the problem as a mixed-integer nonlinear programming (MINLP) model, solved using a memetic algorithm and Monte Carlo simulation, to consider uncertainties in the travel times and construction costs. Incorporating demand uncertainty, \citet{li2025hierarchical} proposed a hierarchical hub-location model for integrated urban-rural logistics, solved using a scenario-based branch-and-benders-cut algorithm. While these studies significantly advanced the design and operation of H\&S networks in transportation logistics, they did not include aerial drones for freight transshipment, nor did they explicitly study the vehicle routing and scheduling problems to transport different commodities across a hierarchical H\&S network.

The integration of vehicle routing within hierarchical network structures for freight transshipment has led to extensive research on the two-echelon vehicle routing problem (2E-VRP). \citet{hemmelmayr2012adaptive} introduced one of the early studies on the 2E-VRP, where freight is shipped from a depot through intermediate satellite points to the final customers. The authors used an adaptive large neighborhood search algorithm to solve the problem. Later, \citet{baldacci2013exact} and \citet{santos2013branch} incorporated capacity constraints into the 2E-VRP. They proposed MILP models, solved using B\&P, for the two-echelon capacitated vehicle routing problem (2E-CVRP). To develop more advanced exact solution methods, \citet{santos2015branch} solved the 2E-CVRP using a branch-price-and-cut (BPC) algorithm. \citet{wang2018two} extended this line of research by incorporating time windows into the 2E-VRP, solved using a non-dominated sorting Genetic algorithm (NSGA). Later, \citet{dellaert2019branch} proposed an exact B\&P algorithm for the 2E-VRP with time windows (2E-VRPTW), where the authors used a dynamic programming algorithm for solving the pricing subproblems. \citet{mhamedi2022branch} further improved the computational efficiency in solving the 2E-VRPTW, using a BPC algorithm. They introduced multiple classes of valid inequalities to strengthen the linear relaxation. \citet{marques2022branch} extended the 2E-VRPTW to the multi-trip operations of the vehicles. The authors formulated the problem as an MILP model, and solved using an advanced BPC algorithm. \citet{dellaert2021multi} also studied the 2E-VRPTW by incorporating multiple commodities with customer-specific demands and distinct origin-destination pairs, solved using a BPC algorithm. Furthermore, \citet{wu2023branch} studied the 2E-VRP with electric vehicles (EVs) in the second echelon, where EVs need to be recharged at recharging stations. The authors formulated the problem as an arc flow MILP model, and solved using a B\&P algorithm. Despite developing advanced exact solution methods for the 2E-VRPs with different practical aspects, such as time windows and capacity constraints, these studies did not fully address the hierarchical H\&S network design for a heterogeneous fleet of drones with capacity and energy constraints. Moreover, the simultaneous integration of fixed and mobile delivery locations with distinct release times and delivery due times remains largely unexplored.

The growing need for automation and on-demand logistics has shifted research toward drone-based delivery systems. However, the limited battery capacity and flight range of drones limit the adaptability of drone logistics systems in wide geographical distribution networks. To address this challenge, numerous studies have focused on drone delivery network designs to extend the coverage of the drone-based delivery system. Many researchers studied the integration of ground vehicles with drones to extend the drones' coverage (e.g., \citealt{mirzapour2024optimizing, moshref2021comparative, ulmer2018same, zou2024delivery}). Another line of research incorporated intermediate recharging stations or automated battery swapping machines (ABSMs) to enable drones operating over extended distances. For instance, \citet{pinto2020network} proposed a network design model for a meal delivery service using drones. They developed an MILP model, solved using Gurobi, to determine the optimal number and locations of charging stations to maximize the potential demand that can be covered. Later, \citet{pinto2022point} studied the point-to-point drone-based delivery network design with intermediate charging stations to extend the drone coverage. This study used a heuristic method based on the $m$-shortest path concept to solve the problem. \citet{cokyasar2021designing} also studied a drone delivery network design, leveraging ABSMs to further extend the drones' coverage, solved using a cutting plane algorithm. A few studies leveraged the H\&S network design in drone-based delivery logistics. \citet{gao2022optimizing} studied a truck-drone H\&S delivery network, where trucks transport goods between the hubs, and drones perform the last-mile delivery to the spoke nodes. They used a three-stage metaheuristic algorithm---variable neighborhood search, nearest neighbor algorithm, and reassignment strategy---to determine the optimal location of hubs, and the allocation of spoke nodes to the hubs. \citet{hu2024drone} further extended this research by including the flight range limitation of the drones and service time limits of the customers. The authors used an NSGA heuristic to determine the minimum consolidation and transshipment cost of the assignment of the spoke nodes to the hubs. However, the hierarchical H\&S network structure has not been studied yet in the drone delivery network design literature, especially considering a heterogeneous fleet of drones with different operational characteristics and delivery modes. The hierarchical H\&S network designs are particularly important for regionally distributed demand points, where consolidation, transshipment, and last-mile operations must be coordinated across multiple layers of the network~\citep{ogazon2025designing}. Furthermore, the coordinated delivery of time-sensitive products with distinct weights, release times, and time windows, has remained largely unexplored in the hierarchical drone delivery network design context.

In the healthcare domain, several studies have explored drone-based delivery logistics to improve the accessibility of critical supplies in time-sensitive contexts. \citet{wen2016multi} studied a drone-based logistics system for delivering critical medical supplies to wounded individuals in an emergency situation. They proposed a bi-objective non-linear model, solved using an NSGA. Their goal was to minimize the total distribution time, as well as the required number of drones. \citet{kim2017drone} designed a drone-aided healthcare service for patients with Chronic diseases in rural areas. The authors developed a pre-processing algorithm, a partition method, and a Lagrangian relaxation (LR) to minimize the drone operating costs, where drones deliver medications and collect the test kits. Later, \citet{rabta2018drone} proposed an MILP model, solved using GAMS, to deliver medical items using a homogeneous drone fleet in disaster relief operations. \citet{ghelichi2021logistics} and \citet{gentili2022locating} extended this line of research by studying drone-based timely deliveries of medical items to remote or hard-to-access locations. Both studies developed time-slot-based MILP models, solved using commercial solvers (e.g., Gurobi and CPLEX), to jointly route and schedule drones, and determine the optimal locations for intermediate facilities. \citet{ghelichi2021logistics} used intermediate recharging stations, while \citet{gentili2022locating} focused on the location of drone platforms, as well as the allocation of demand points to the platforms. The aforementioned studies addressed important aspects of drone-based healthcare and emergency logistics. However, they lack hierarchical drone delivery network designs, which are essential for serving geographically distributed demand points in a multi-layer distribution system. Moreover, continuous tracking of the energy consumption of a heterogeneous drone fleet across multiple trips to deliver medical items with distinct package release times and delivery due times remains largely unexplored. Furthermore, these studies typically used heuristic methods without an optimality guarantee, or commercial solvers (e.g., CPLEX) with limited scalability to realistic-sized problem instances. Given the life-saving implications of drone-based healthcare logistics, strengthening the mathematical formulation through advanced separation procedures is essential for solving large-scale instances to optimality. Yet, such methodological advancements remain largely unexplored in this literature.

To the best of our knowledge, only a few studies have explored the multi-layer drone-delivery network designs in healthcare logistics. \citet{enayati2023multimodal} addressed a multimodal vaccine distribution network that integrates traditional transportation modes (e.g., trucks, airplanes, and boats) with drones. Their problem determined the locations of distribution centers, drone bases, and drone relay stations to recharge the drones. This study also considered the limited flight range of two types of drones, as well as the cold chain time limit of vaccines. They developed two MILP formulations, including a compact model that tracks the aggregated travel time for vaccines, and a layered-network model that explicitly traces the vaccine flow associated with each origin-destination pair. The authors proposed a modified Benders decomposition algorithm to solve large-scale instances of the problem. However, in this study, the vaccine demand is modeled as a continuous quantity at the regional healthcare zones, rather than as discrete, package-specific deliveries that can capture distinct package weights, release times, and delivery due times. \citet{amirsahami2023hierarchical} presented a hierarchical model for strategic and operational planning in blood transportation with drones. The authors developed an MILP model, solved using an augmented $\epsilon$-constraint method, to minimize the total cost of a three-layer distribution network, including blood collection centers, blood transfusion centers, and hospitals. However, the flow of donated blood is not explicitly modeled in their study, and the demand is represented as the required number of drone trips at each facility. Additionally, neither study addressed the hierarchical H\&S network design with explicit inter-hub and intra-hub coordination, as well as routing and scheduling a heterogeneous drone fleet with different operational characteristics and delivery modes for delivering medical items with distinct weights, release times, and time windows.

Despite the valuable insights from the existing literature, several critical gaps remain unaddressed. \textbf{First}, the most important research gap is the lack of studies on the hierarchical H\&S drone delivery network design for delivering time-sensitive medical items with distinct package weights, release times, delivery destinations, and delivery due times. \textbf{Second}, existing studies often overlooked heterogeneous drone fleets with distinct costs and operational characteristics (i.e., battery capacity, package weight carrying capacity, speed, and power consumption), which perform multiple trips through different delivery modes (i.e., direct delivery vs. circular delivery). Moreover, continuously tracking the payload and energy consumption of each drone during each flight segment, accounting for the actual drone data and practical factors---package weight, distance, and drone type---with exogenous and endogenous battery swapping of the drones, remains largely unexplored. \textbf{Third}, the existing literature did not simultaneously consider fixed and mobile delivery destinations, each requiring a specific package delivery, which necessitates inter-hub and intra-hub coordination among drones across the delivery network. \textbf{Fourth}, current studies typically overlooked the need for efficient exact methods which tighten the underlying LP-relaxation through problem-specific reformulations and tailored dynamic cutting planes, leveraging the hierarchical H\&S structure. Unlike generic valid inequalities, these cuts arise from the structure of the hierarchical H\&S drone delivery problem itself, and require customized separation routines on the fractional LP solutions to tighten the LP-relaxation. Developing such efficient exact methods is essential for solving large-scale instances of the hierarchical H\&S drone delivery problem, which remain largely unexplored in the existing literature.

\vspace{-5pt}
\subsection{Contributions}

To fill the aforementioned gaps in the existing literature, we make the following contributions. \textbf{First,} we develop a comprehensive MILP model for a novel problem of designing a hierarchical H\&S network for the last-mile delivery of time-sensitive medical items in healthcare logistics. We address the following practical aspects: (1) a hierarchical H\&S network design with inter-hub and intra-hub coordination, where logistics hubs are intermediate locations (e.g., regional hospitals) for consolidation between a central depot and geographically distributed delivery destinations; (2) a heterogeneous fleet of drones with different delivery modes---direct delivery vs. circular delivery---and distinct characteristics, including cost, battery capacity, PWCC, speed, and power consumption; (3) different packages (i.e., medical items), each with a distinct package weight, release time at the central depot, delivery destination, and delivery due time; (4) simultaneous consideration of fixed (e.g., clinics) and mobile delivery destinations (e.g., ambulances); (5) continuous tracking of the energy consumption of drones across multiple trips, accounting for actual drone data and practical factors (e.g., package weight, delivery distance, and drone type); and (6) adaptive battery swapping with respect to drone type, where longer-range drones perform exogenous battery swaps (i.e., at the end of each trip), whereas shorter-range drones perform endogenous battery swaps (i.e., on an as-needed fashion during the delivery operation).

\textbf{Second,} we propose a customized exact solution method integrating the following set of enhancement strategies: (1) defining restricted sets and variable fixings to reduce the feasible region; (2) developing problem-specific reformulations to tighten the LP-relaxation; (3) proposing valid inequalities to reduce the solution space; and (4) designing dynamic cutting plane generation procedures to tighten the underlying LP-relaxation.

\textbf{Third,} we develop a heuristic solution algorithm, leveraging a simplified variant of the original problem, which integrates a set-partitioning model to decouple the hierarchical structure. The proposed heuristic algorithm solves large-scale problem instances faster, while providing high-quality near-optimal solutions.

\textbf{Fourth,} based on a real-life case study of whole blood delivery data and actual drone flight test data, we provide valuable managerial insights for healthcare logistics system owners into (1) the performance of different solution methods in solving the hierarchical H\&S drone delivery network design problem to deliver time-sensitive medical items across a regional healthcare network; (2) the effect of fleet type on the total cost and the required fleet composition of drones; and (3) the effect of maximum permissible delay, consolidation delay, and the multi-trip delivery mode of drones on the total cost and the required number of drones of each type.

\vspace{-10pt}
\section{Problem Description}\label{sec:Problem_Description}

In this paper, we study a hierarchical H\&S drone delivery network design for a heterogeneous fleet of drones to deliver time-sensitive medical items (e.g., blood products) across a regional healthcare system. The hierarchical structure in our study, comprises three functional layers and two operational echelons, as shown in Figure \ref{fig:Problem_Description}.

\begin{figure}[h!]
    \centering
    \includegraphics[width=0.75\linewidth]{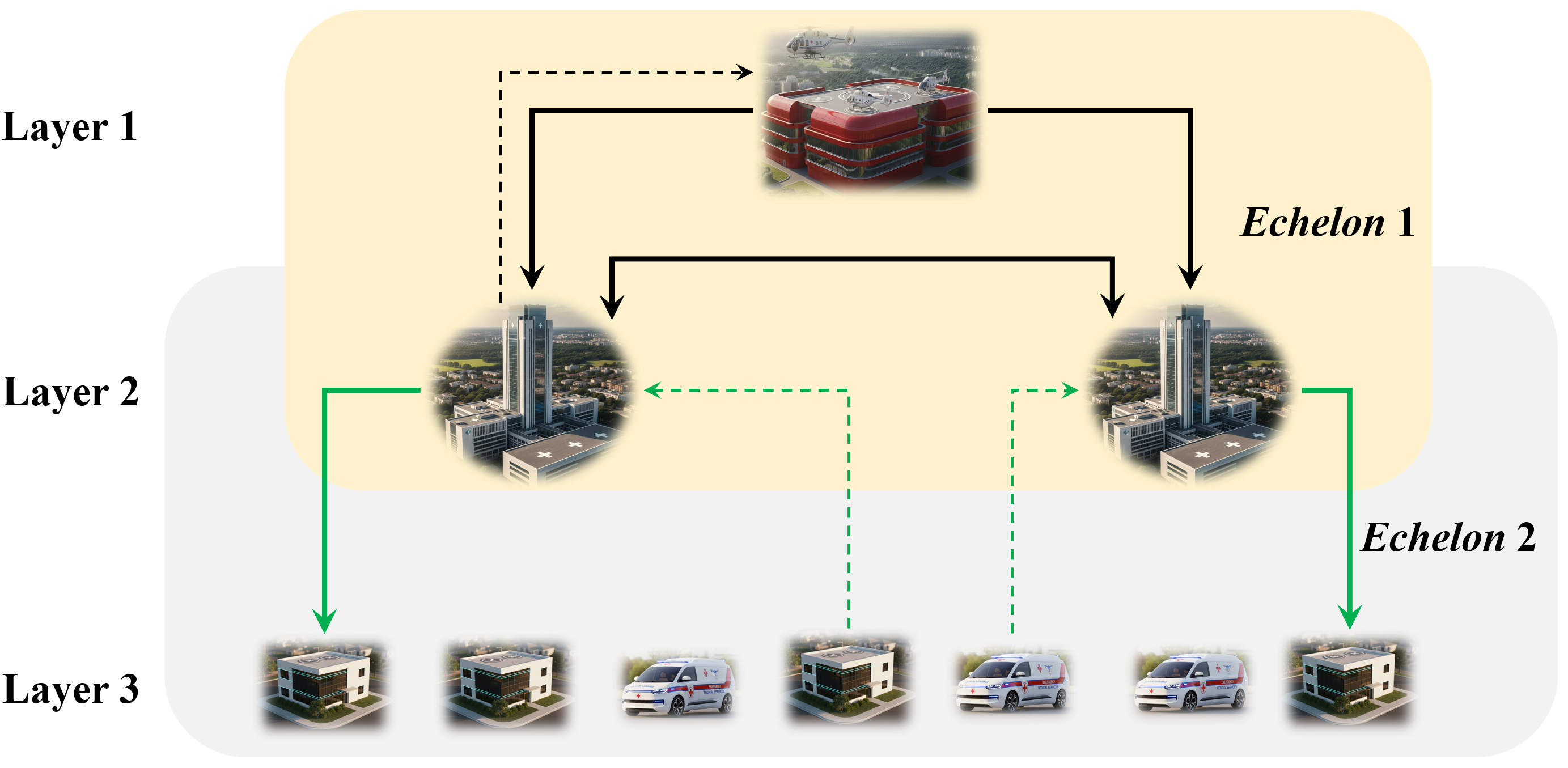}
    \caption{Visualization of the hierarchical network design to deliver time-sensitive medical items.}
    \label{fig:Problem_Description}
\end{figure}

\textbf{Layer 1} is the strategic supply layer, represented by a \textit{central depot} (CD), such as a central blood bank, that acts as the primary source of all medical items (e.g., whole blood). Each medical item (i.e., package) has distinct characteristics, including weight, release time at the CD, and the delivery due time at the corresponding delivery destination. \textbf{Layer 2} is the intermediate logistics layer, responsible for receiving bulk shipments from layer 1, temporarily storing medical items (if needed), and coordinating their secondary distribution to the next layer. We represent this layer by a set of \textit{logistics hubs} (LHs), which corresponds to regional medical centers (e.g., major hospitals). \textbf{Layer 3} represents the final delivery locations (i.e., delivery destinations). Each delivery location requires a single package delivery before its respective delivery due time to support emergency operations, such as blood transfusion to a patient. This layer includes two types of delivery locations: ($i$) a set of \textit{clinics} (CLs), representing stationary medical sites; and ($ii$) a set of \textit{ambulances} (AMBs), representing mobile medical sites. Each AMB is assigned to an \textit{ambulance location} (AL) selected from a set of candidate ALs, and receives its corresponding medical item at the designated AL.

The two echelons define the operational interactions among these three layers. \textbf{Echelon 1} manages the long-range transportation of the medical items from layer 1 (i.e., the CD) to layer 2 (i.e., the LHs), using a homogeneous fleet of large drones (LDs), as shown in Figure \ref{subfig:Echelon1}. In this echelon, LDs with long flight ranges and large package weight carrying capacities (e.g., Aero-200 long-range drones) perform multiple circular delivery trips. Each LD's trip starts from the CD by picking up multiple packages that are available (i.e., released). The LD then sequentially visits the LHs, and delivers multiple packages to each visited LH. The long flight ranges and large battery capacities of the LDs enable them to complete each trip without requiring en-route battery swapping. Upon delivering all the assigned packages in each trip, the LD returns to the CD to pick up the next available packages, swaps its battery, and conducts its next trip. Each LH in echelon 1 can be visited multiple times across different trips of the same LD, and/or by different LDs. Therefore, the multi-trip circular delivery of the LDs repeats until all the packages are transported from the CD to the LHs.

\begin{figure}[htbp]
    \centering
    \begin{subfigure}[b]{0.45\textwidth}
        \centering
        \includegraphics[width=\linewidth]{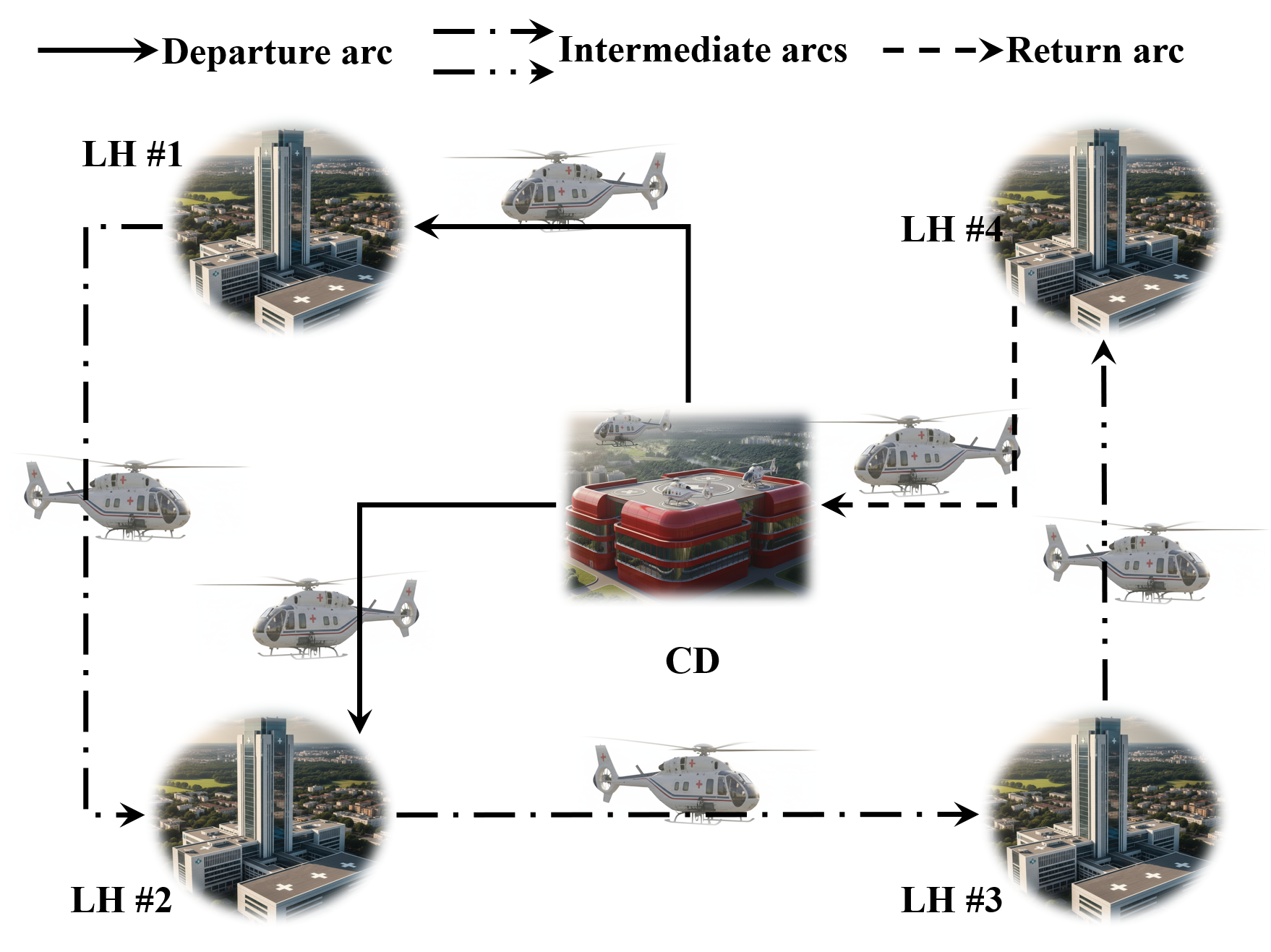}
        \caption{LDs in echelon 1.\vspace{-7pt}}
        \label{subfig:Echelon1}
    \end{subfigure}
    \hfill
    \begin{subfigure}[b]{0.45\textwidth}
        \centering
        \includegraphics[width=\linewidth]{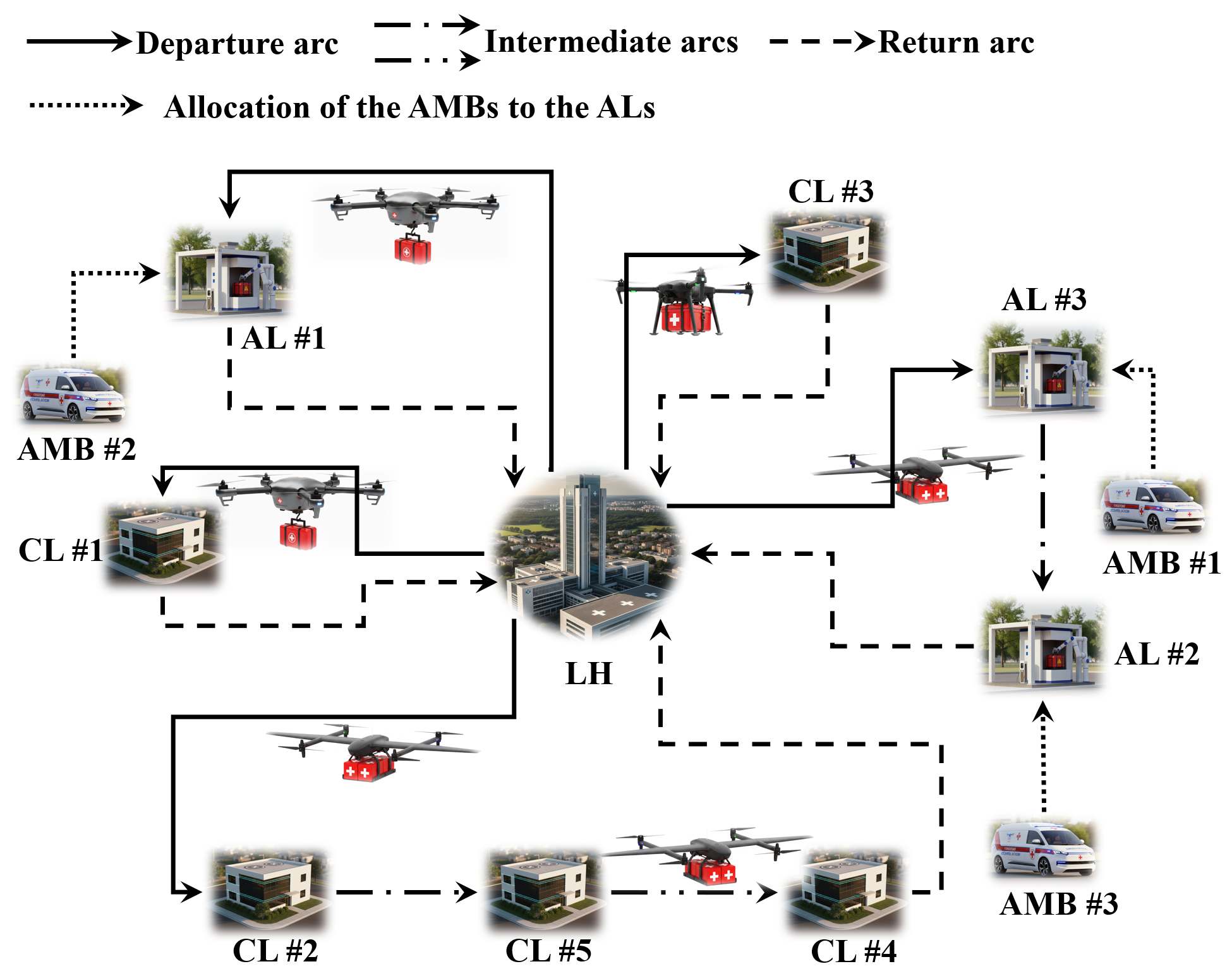}
        \caption{MDs and SDs in echelon 2.\vspace{-7pt}}
        \label{subfig:Echelon2}
    \end{subfigure}
    \caption{Visualization of the routing and scheduling of drones in the two echelons.}
    \label{fig:Echelon2_routing}
\end{figure}
Although we consider the LDs with long flight ranges and the ability to carry a large amount of packages in each trip, their battery capacity and package weight carrying capacity (PWCC) are limited. Therefore, the total package weight each LD carries in each trip must not exceed its PWCC. Moreover, to ensure the safe operation of drones, we explicitly account for the actual energy consumption of the LDs during the delivery process, based on real drone flight test data and practical factors (e.g., package weight, travel distance, and drone type). As package weight directly affects the energy consumption of drones during each trip, we track the payload (i.e., package weight) and energy consumption of each LD during each flight segment---traveling between different nodes in echelon 1 (i.e., the CD and the LHs)---to ensure that each LD can complete each trip with sufficient remaining battery energy and return to the CD to swap its battery.

Considering all the above practical aspects, the multi-trip circular delivery of each LD in echelon 1 accounts for: (1) the routing and scheduling of the LD between the CD and the corresponding LHs (i.e., the order and timing of visiting the LHs in each trip), (2) the packages that the LD carries in each trip, (3) the delivery location of each package (i.e., the corresponding LH that each package is delivered), and (4) the delivery time of each package to the corresponding LH.

\textbf{Echelon 2} conducts the localized distribution (i.e., last-mile delivery) of medical items from layer 2 (i.e., LHs) to layer 3 (i.e., delivery destinations---CLs and AMBs), as shown in Figure \ref{subfig:Echelon2}. At each LH, a homogeneous fleet of medium-range drones (MDs), and a heterogeneous fleet of short-range drones (SDs) are stationed. Given the larger PWCC and the longer flight range of the MDs (e.g., Wingcopter 198 vertical-takeoff-and-landing) compared to SDs (e.g., rotary drones---Tarot 650 and DJI Matrice 600 Pro), the MDs perform multi-trip circular deliveries, whereas the SDs conduct multiple direct delivery trips. Additionally, each SD type has distinct operational characteristics, including battery capacity, PWCC, power consumption, and speed. As each delivery destination requires a single package delivery, each CL/AMB is visited exactly once by a single drone. Furthermore, the delivery of the packages to the CLs and the AMBs is performed with separate drones, i.e., a drone serving the CLs does not deliver packages to the AMBs. This is because, packages destined for the AMBs typically require different packaging and transfer protocols than those delivered to the CLs. For instance, blood transfusion in ambulances requires maintaining a specific temperature range, which necessitates using drones equipped with this technology.

In the circular delivery mode, each MD performs multiple trips between an LH and the CLs/AMBs. In each trip, an MD departs an LH, carrying multiple available packages (i.e., packages that are delivered to the LH before the departure time of the MD). Then the MD visits a sequence of CLs/AMBs to deliver packages, and returns to the same LH to swap its battery and perform the next trip. Similar to the LDs, we account for the PWCC of the MDs, and track their payload and energy consumption during each trip. Additionally, the MDs have limited spatial capacity, where the number of packages each MD can carry in each trip is limited. Therefore, the circular delivery of each trip of an MD accounts for: (1) the departure time from the corresponding LH, (2) the set of packages that need to be delivered to the corresponding CLs/AMBs, (3) the sequence of delivering the packages, and (4) the delivery time of each package to the corresponding CL/AMB.

Unlike MDs, each SD performs multiple direct delivery trips between its associated LH and the CLs/AMBs, carrying a single package at a time. Each direct delivery trip starts from an LH by picking up an available package. The SD then flies to the corresponding CL/AMB to deliver the package, and immediately returns to the same LH to perform its next trip. In contrast to the MDs and the LDs, we consider endogenous battery swapping for the SDs to ensure safe and efficient operation of drones, where a drone's battery is swapped only if it is needed. For each SD, after returning to the LH from a CL/AMB, and before leaving the LH for the next CL/AMB, we compute the potential remaining energy in the drone's battery. The potential remaining energy is the amount of energy that will be remaining in the drone's battery upon its return to the LH, if the drone delivers the package to the next CL/AMB and returns. If the potential remaining energy falls below the minimum required energy in the drone's battery, we swap the SD's battery with a fully charged one at the LH before it departs for the next CL/AMB. Therefore, the multi-trip direct delivery schedule of each SD in echelon 2 accounts for: (1) the packages that need to be delivered across multiple trips, (2) the sequence of delivering the packages, (3) the delivery time of each package to the corresponding CL/AMB, and (4) the endogenous battery swaps.

The goal of the logistics system owner (i.e., decision-maker) in this problem is to determine the allocation of the AMBs to the ALs, as well as the efficient routing and scheduling of all the drones in the network. The decision-maker's goal is to efficiently make up a heterogeneous fleet of drones, as well as to minimize the total system cost, while delivering all medical items within their respective time windows---defined by the package release times and delivery due times. The total cost includes the investment and transportation energy consumption costs of drones, the total cost of the required drone batteries, and the allocation cost of the AMBs to the ALs. We refer to this problem as the hierarchical H\&S drone delivery network design for time-sensitive medical items with distinct release times and delivery due times (\textit{HDDNM}).

\vspace{-10pt}
\section{Mathematical Formulation}\label{sec:Model-Formulations}

In this section, we present the mathematical formulation of the \textit{HDDNM} problem, which we formulate as an arc flow MILP model. The mathematical formulation in this section, presented in Eqs. \eqref{eq:obj} - \eqref{E2-Const65}, is a complete polynomial-sized MILP model based on the classical Miller-Tucker-Zemlin (MTZ) formulation (referred to as \textit{BaM} throughout the remainder of the paper). We list the necessary sets, parameters, and variables in Tables \ref{tab:Sets}, \ref{tab:Parameters}, and \ref{tab:Variables}, respectively, that support the mathematical formulation presented in this section. \vspace{-5pt}

\begin{table}[H]
  \caption{Sets.\vspace{-5pt}}
  \label{tab:Sets}
  \begin{tabularx}{\linewidth}{lX}
  \toprule
    \textbf{Set} & \textbf{Description}\\
    \midrule
    \hline
    $\mathcal I$ & Set of CLs, indexed by $i$ and $j$ \\
    $\mathcal M$ & Set of AMBs and their corresponding packages, indexed by $m$ and $n$ \\
    $\mathcal P$ & Set of all packages for CLs and AMBs, indexed by $p$, where $\mathcal P = \mathcal I \cup \mathcal M$ \\
    $\mathcal D$ & Set of LDs, indexed by $d$ \\
    $\mathcal H$ & Set of LHs, indexed by $h$ and $k$ (Note: $h$ is the hub index. In echelon 2 of the network, we represent the dummy source and sink for each LH $h$, as $h$ and $h'$.)  \\
    $\mathcal H'$ & Set of all nodes in echelon 1, including the dummy source (CD) $0$ and sink $S$, where $\mathcal H' = \{0\} \cup \mathcal H \cup \{S\}$  \\
    $\mathcal G^{LD}$ & Set of trips of LDs, indexed by $g$ \\
    $\mathcal E$ & Set of arcs in echelon 1 of the network, where $\mathcal E = \{(0, h): h \in \mathcal H\} \cup \{(h, k) \in \mathcal H \times \mathcal H: h \neq k\} \cup \{(h, S): h \in \mathcal H\}$ \\
    $\mathcal L$ & Set of candidate locations for the AMBs (ALs), indexed by $l$ and $o$  \\
    $\mathcal T_h$ & Set of SD types stationed at LH $h$, indexed by $t$   \\
    $\mathcal V_h$ & Set of MDs stationed at LH $h$, indexed by $v$  \\
    $\mathcal G^{MD}$ & Set of trips of MDs, indexed by $g'$  \\
    $\mathcal A_h$ & Set of arcs in echelon 2 of the network between LH $h$ and the CLs, where $\mathcal A_h = \{(h,i):i\in\mathcal I \} \cup \{(i,j) \in\mathcal I \times \mathcal I: i \neq j \} \cup \{(i,h'): i\in\mathcal I \}$  \\
    $\mathcal B_h$ & Set of arcs in echelon 2 of the network between LH $h$ and the ALs, where $\mathcal B_h = \{(h,l):l\in\mathcal L \} \cup \{(l,o) \in\mathcal L \times \mathcal L: l \neq o \} \cup \{(l,h'): l\in\mathcal L \}$  \\
    \bottomrule
  \end{tabularx}
\end{table}

\begin{table}[H]
  \caption{Parameters.\vspace{-5pt}}
  \label{tab:Parameters}
  \begin{tabularx}{\linewidth}{lX}
  \toprule
    \textbf{Parameter} & \textbf{Description}\\
    \midrule
    \hline
    $F^{LD/MD/t}$ & Fixed cost of using an LD/MD/SD of type $t$ \\
    $C^E$ & Energy consumption cost (\$/kWh) \\
    $C_{bat}^{LD/MD/t}$ & Fixed cost of the battery of an LD/MD/SD of type $t$ \\
    $T_{hk}^{LD}$ & Time required for an LD to travel on arc $(h,k)\in\mathcal E$ \\
    $T_{ij/lo}^{MD}$ & Time required for an MD to travel on arc $(i,j)\in\mathcal A_h$/$(l,o)\in\mathcal B_h, \forall h\in\mathcal H$\\
    $T_{hi}^{t}, T_{ih'}^{t}$ & Time required for an SD of type $t$ to travel between LH $h$ and CL $i\in\mathcal I$ \\
    $T_{hl}^{t}, T_{lh'}^{t}$ & Time required for an SD of type $t$ to travel between LH $h$ and AL $l\in\mathcal L$ \\
    $T_{bat}^{LD/MD/t}$ & Battery swapping time for an LD/MD/SD of type $t$ \\
    $T_L^{LD/MD/t}$ & Package loading time for an LD/MD/SD of type $t$ \\
    $T_U^{LD/MD/t}$ & Package unloading time for an LD/MD/SD of type $t$ \\
    $R_{p}$ & Release time of package $p\in\mathcal P$ at the CD \\
    $L_{p}$ & Due time of delivering package $p$ to the corresponding CL/AMB \\
    $P_{p}$ & Weight of package $p$ \\
    $P_{max}^{LD/MD/t}$ & Package weight carrying capacity of an LD/MD/SD of type $t$ \\
    $P_N^{MD}$ & Maximum number of packages an MD can carry in each trip \\
    $E_{hk}^{0, LD}$ & Base energy consumption (kWh) of an LD to travel on arc $(h,k)\in\mathcal E$ \\
    $E_{hk}^{1, LD}$ & Rate of increment in the energy consumption (kWh/kg) of an LD to travel on arc $(h,k)\in\mathcal E$ with respect to the increment in the package weight \\
    $E_{ij/lo}^{0, MD}$ & Base energy consumption (kWh) of an MD to travel on arc $(i,j)\in\mathcal A_h$/$(l,o)\in\mathcal B_h, \forall h\in\mathcal H$\\
    $E_{ij/lo}^{1, MD}$ & Rate of increment in the energy consumption (kWh/kg) of an MD to travel on arc $(i,j)\in\mathcal A_h$/$(l,o)\in\mathcal B_h, \forall h\in\mathcal H$ with respect to the increment in the package weight \\
    $E_{hi/hl}^{0, t}$  & Base energy consumption (kWh) of an SD of type $t$ to travel on arc $(h,i)$/$(h,l)$ \\
    $E_{hi/hl}^{1, t}$ & Rate of increment in the energy consumption (kWh/kg) of an SD of type $t$ to travel on arc $(h,i)$/$(h,l)$ with respect to the increment in the package weight \\
    $B_{max}^{LD/MD/t}$ & The capacity of a fully charged battery of an LD/MD/SD of type $t$   \\
    $B_{min}^{LD/MD/t}$ & Minimum required energy in the battery of an LD/MD/SD of type $t$   \\
    $LC_l$ & Fixed cost of opening AL $l$ for serving an ambulance \\
    $AC_{ml}$ & Cost of allocating AMB $m\in\mathcal M$ to AL $l$ \\
    \bottomrule
  \end{tabularx}
\end{table}

\begin{table}[H]
  \caption{Variables.\vspace{-5pt}}
  \label{tab:Variables}
  \begin{tabularx}{\linewidth}{lX}
  \toprule
    \textbf{Variable} & \textbf{Description}\\
    \midrule
    \hline
    $w_{ph}^{dg}$ & 1 if package $p\in\mathcal P$ is delivered to LH $h\in \mathcal H$ by LD $d\in\mathcal D$ on its trip $g\in\mathcal G^{LD}$,
    0 otherwise \\
    $x_{hk}^{dg}$ & 1 if LH $h$ is visited immediately before LH $k\in\mathcal H$ by LD $d$ on its trip $g$, 0 otherwise \\
    $dt_0^{dg}$ & Departure time of LD $d$ on its trip $g$ from the CD  \\
    $rt_0^{dg}$ & Timing of when LD $d$ returns to the CD after concluding its trip $g$ \\
    $at_{h}^{dg}$ & Arrival time of LD $d$ to LH $h$ on its trip $g$ \\
    $\pi_p$ & Delivery time of package $p$ to an LH  \\
    $u^d$ & 1 if LD $d$ is used, 0 otherwise \\
    $u^{dg}$ & 1 if LD $d$ performs its trip $g$, 0 otherwise \\
    $r_h^{dg}$ & 1 if LD $d$ visits LH $h$ on its trip $g$, 0 otherwise \\
    $pw_{hk}^{dg}$ & Continuous variable representing the total weight of the packages carried by LD $d$ on its trip $g$, while traveling on arc $(h,k)\in\mathcal E$ \\
    $e_{hk}^{dg}$ & Continuous variable representing the energy consumption of LD $d$ on its trip $g$, while traveling on arc $(h,k)\in\mathcal E$\\
    $\alpha_l$ & 1 if AL $l$ is opened for serving an ambulance, 0 otherwise \\
    $\gamma_{ml}$ & 1 if AMB $m$ is allocated to AL $l$, 0 otherwise \\
    $\beta_{ph}$ & 1 if package $p$ is delivered to LH $h$, 0 otherwise \\
    $\theta_{ij/lo}^t$ & 1 if CL $i$/AL $l$ is served immediately before CL $j$/AL $o$ by an SD of type $t$, 0 otherwise \\
    $z_{ij/lo}^{vg'}$ & 1 if CL $i$/AL $l$ is visited immediately before CL $j$/AL $l$ by MD $v$ on its trip $g'$, 0 otherwise \\
    $\mu_h^v$ & 1 if MD $v$ from LH $h$ is used, 0 otherwise \\
    $\mu_h^{vg'}$ & 1 if MD $v$ from LH $h$ performs its trip $g'$, 0 otherwise \\
    $r_{ih/lh}^t$ & 1 if CL $i$/AL $l$ is served by an SD of type $t$ from LH $h$, 0 otherwise \\
    $y_{ih/lh}^{vg'}$ & 1 if CL $i$/AL $l$ is served by MD $v$ from LH $h$ on its trip $g'$, 0 otherwise \\
    $pw_{ij/lo}^{vg'}$ & Continuous variable representing the total weight of the packages carried by MD $v$ on its trip $g$, while traveling on arc $(i,j)\in\mathcal A_h$/$(l, o)\in\mathcal B_h, \forall h\in\mathcal H$  \\
    $\rho_{ih/lh}^t$ & Continuous variable representing the energy consumption of an SD of type $t$, while traveling on arc $(i,h)$/$(l,o)$  \\
    $q_{ij/lo}^{vg'}$ & Continuous variable representing the energy consumption of MD $v$ on its trip $g'$, while traveling on arc $(i,j)\in\mathcal A_h$/$(l,o)\in\mathcal B_h, \forall h\in\mathcal H$  \\
    $bt_h^{vg'}$ & Timing of when MD $v$ returns to LH $h$ after concluding its trip $g'$  \\
    $pt_h^{vg'}$ & Timing of when MD $v$ departs LH $h$ with carrying some packages on its trip $g'$  \\
    $s_{ih/lh}^t$ & 1 if the battery of an SD of type $t$ originating from LH $h$ is swapped after serving CL $i$/AL $l$ for visiting the next CL/AL, 0 otherwise \\
    $\delta_p$ & Delivery time of package $p$ to its corresponding CL/AMB  \\
    \bottomrule
  \end{tabularx}
\end{table}

\paragraph{Objective Function:}

\begin{align}
    \min & \; \textbf{C} = \textbf{C}^\text{fleet} + \textbf{C}^\text{batt} + \textbf{C}^\text{energy} + \textbf{C}^\text{alloc} \label{eq:obj}\\
    \text{where} & \; \textbf{C}^\text{fleet} = \sum_{d\in\mathcal D} F^{LD} \cdot u^d  + \sum_{h\in\mathcal H}\sum_{t\in\mathcal T_h}  F^t \Big(\sum_{i\in\mathcal I} \theta_{hi}^t + \sum_{l\in\mathcal L} \theta_{hl}^t \Big) + \sum_{h\in\mathcal H} \sum_{v\in\mathcal V_h} F^{MD} \cdot \mu_h^v \nonumber\\
     & \; \textbf{C}^\text{batt} = \sum_{d\in\mathcal D} \sum_{g\in\mathcal G^{LD}} C_{bat}^{LD}\cdot u^{dg} + \sum_{h\in\mathcal H}\sum_{t\in\mathcal T_h}  C_{bat}^t\Big(\sum_{i\in\mathcal I}s_{ih}^t + \sum_{l\in\mathcal L}s_{lh}^t \Big) + \sum_{h\in\mathcal H}\sum_{v\in\mathcal V_h} \sum_{g'\in\mathcal G^{MD}} C_{bat}^{MD}\cdot \mu_h^{vg'} \nonumber \\ 
     & \quad \textbf{C}^\text{energy} =  C^E \Bigg\{ \sum_{d\in\mathcal D} \sum_{g\in\mathcal G^{LD}} \sum_{(h,k)\in\mathcal E}e_{hk}^{dg}  + \sum_{h\in\mathcal H}\sum_{t\in\mathcal T_h} \Big(\sum_{i\in\mathcal I} \rho_{ih}^t + \sum_{l\in\mathcal L} \rho_{lh}^t \Big) \nonumber\\ 
     & \qquad \qquad \qquad+ \sum_{h\in\mathcal H}\sum_{v\in\mathcal V_h} \sum_{g'\in\mathcal G^{MD}} \Big(\sum_{(i,j)\in\mathcal A_h} q_{ij}^{vg'} +  \sum_{(l,o)\in\mathcal B_h} q_{lo}^{vg'} \Big) \Bigg\} \nonumber \\
     & \quad \textbf{C}^\text{alloc} = \sum_{l\in\mathcal L} LC_l \cdot \alpha_l + \sum_{l\in\mathcal L} \sum_{m\in\mathcal M}AC_{ml} \cdot \gamma_{ml} \nonumber
\end{align}

\vspace{-5pt}The objective function \eqref{eq:obj} aims to minimize the total system cost, denoted by $\textbf{C}$, to deliver all the packages from the CD to the final delivery locations (i.e., CLs and AMBs) within the specified time windows---defined by the package release times and delivery due times. Here, $\textbf{C}^\text{fleet}$ is the investment and operating costs of all the drones (i.e., LDs, MDs, and SDs). $\textbf{C}^\text{batt}$ is the investment cost of the required number of drone batteries. $\textbf{C}^\text{energy}$ is the total transportation energy consumption cost of all the drones in the two echelons of the network. Furthermore, $\textbf{C}^\text{alloc}$ is the total cost of opening the ALs and allocating the AMBs to the ALs. 

\vspace{-10pt}
\paragraph{Echelon 1:}

\allowdisplaybreaks
\begin{align}
    & \sum_{h\in\mathcal H} x_{0h}^{dg} = \sum_{k\in\mathcal H} x_{kS}^{dg} = u^{dg}, \quad \forall d\in\mathcal D,\, g\in\mathcal G^{LD} \label{eq:E1-Const1}\\
    & \sum_{k\in\mathcal H' \setminus \{S\}} x_{kh}^{dg} = \sum_{k\in\mathcal H' \setminus \{0\}} x_{hk}^{dg} = r_h^{dg}, \quad \forall h\in\mathcal H, \, d\in\mathcal D,\, g\in\mathcal G^{LD} \label{eq:E1-Const2} \\
    & w_{ph}^{dg} \leq r_h^{dg},\quad \forall p\in\mathcal P,\, h\in\mathcal H,\, d\in\mathcal D,\, g\in\mathcal G^{LD} \label{eq:E1-Const3} \\
    & r_h^{dg} \leq u^{dg},\quad \forall h\in\mathcal H,\, d\in\mathcal D,\, g\in\mathcal G^{LD} \label{eq:E1-Const4} \\
    & u^{dg} \leq u^d, \quad \forall d\in\mathcal D,\, g\in\mathcal G^{LD} \label{eq:E1-Const5} \\
    & \sum_{g\in\mathcal G^{LD}}\sum_{d\in\mathcal D}\sum_{h\in\mathcal H} w_{ph}^{dg} = 1, \quad \forall p\in\mathcal P  \label{eq:E1-Const6} \\
    & at_h^{dg} \geq dt_0^{dg}  + T_L^{LD} + T_U^{LD} + T_{0h}^{LD} - M_1 (1- x_{0h}^{dg}), \quad \forall h\in\mathcal H,\, d\in\mathcal D,\, g\in\mathcal G^{LD}  \label{eq:E1-Const8}\\
    & at_k^{dg} \geq at_h^{dg} + T_U^{LD} + T_{hk}^{LD} - M_1 (1 - x_{hk}^{dg}), \quad \forall h,k\in\mathcal H,\, h \neq k,\, d\in\mathcal D,\, g\in\mathcal G^{LD}  \label{eq:E1-Const9}\\
    & rt_0^{dg} \geq at_h^{dg} + T_{hS}^{LD} - M_1 (1 - x_{hS}^{dg}), \quad \forall h\in\mathcal H,\, d\in\mathcal D,\, g\in\mathcal G^{LD}  \label{eq:E1-Const10}\\
    & dt_0^{dg} \geq R_p \cdot \sum_{h\in\mathcal H}w_{ph}^{dg}, \quad \forall p\in\mathcal P,\, d\in\mathcal D,\, g\in\mathcal G^{LD}  \label{eq:E1-Const11}\\
    & dt_0^{dg} \geq rt_0^{d,g''} + T_{bat}^{LD},\quad \forall d\in\mathcal D,\, g,g''\in\mathcal G^{LD} , \, g''<g \label{eq:E1-Const12} \\
    & pw_{hk}^{dg} \leq M_2 \cdot x_{hk}^{dg}, \quad \forall (h,k)\in\mathcal E,\, d\in\mathcal D,\, g\in\mathcal G^{LD} \label{eq:E1-Const15}\\
    & -M_2 (1- x_{0h}^{dg}) \leq pw_{0h}^{dg} - \sum_{p\in\mathcal P}\sum_{k\in\mathcal H} P_p \cdot w_{pk}^{dg} \leq M_2 (1- x_{0h}^{dg}), \quad \forall h\in\mathcal H,\, d\in\mathcal D,\, g\in\mathcal G^{LD} \label{eq:E1-Const16}\\
    & -M_2 (1- r_{h}^{dg}) \leq \sum_{k': (k',h)\in\mathcal E} pw_{k'h}^{dg} - \sum_{p\in\mathcal P} P_p \cdot w_{ph}^{dg} - \sum_{k: (h,k)\in\mathcal E} pw_{hk}^{dg} \leq M_2 (1- r_{h}^{dg}), \label{eq:E1-Const17}\\
    & \qquad  \quad \forall h\in\mathcal H,\, d\in\mathcal D,\, g\in\mathcal G^{LD} \nonumber \\
    & \sum_{p\in\mathcal P}\sum_{h\in\mathcal H}P_p \cdot w_{ph}^{dg} \leq P_{max}^{LD}, \quad  d\in\mathcal D,\, g\in\mathcal G^{LD} \label{eq:E1-Const7} \\
    & e_{hk}^{dg} = E_{hk}^{0, LD} \cdot x_{hk}^{dg} + E_{hk}^{1, LD} \cdot pw_{hk}^{dg}, \quad \forall (h,k)\in\mathcal E,\, d\in\mathcal D,\, g\in\mathcal G^{LD} \label{eq:E1-Const18}\\
    & \sum_{(h,k)\in\mathcal E} e_{hk}^{dg} \leq B_{max}^{LD} - B_{min}^{LD}, \quad \forall d\in\mathcal D,\, g\in\mathcal G^{LD} \label{eq:E1-Const19} \\
    & u^d, u^{dg}, r_h^{dg}, w_{ph}^{dg}, x_{hk}^{dg} \in \{0,1\}, \, \qquad dt_0^{dg}, rt_{0}^{dg}, at_h^{dg}, pw_{hk}^{dg}, e_{hk}^{dg} \in \mathbb{R}^{+} \label{eq:E1-Const20}
\end{align}

\vspace{-5pt}Constraints \eqref{eq:E1-Const1} - \eqref{eq:E1-Const2} maintain the flow balance of the LDs traveling between the CD and the LHs.
Constraints \eqref{eq:E1-Const3} ensure that package $p\in\mathcal P$ can be delivered to LH $h$ on trip $g\in\mathcal G^{LD}$ of LD $d\in\mathcal D$ only if LD $d$ visits LH $h$ on its trip $g$. For brevity, we refer to trip $g$ of LD $d$ as \enquote{LD-trip} $dg$ throughout the remainder of the paper. LH $h$ can be visited by LD-trip $dg$ only if LD $d$ performs its trip $g$, as enforced by constraints \eqref{eq:E1-Const4}. Constraints \eqref{eq:E1-Const5} also ensure that LD $d$ can perform its trip $g$ only if LD $d$ is used in the echelon 1 delivery operations. Furthermore, each package must be delivered to exactly one LH by exactly one LD-trip (constraints \eqref{eq:E1-Const6}).

Constraints \eqref{eq:E1-Const8} - \eqref{eq:E1-Const9} compute the earliest possible time that each LH $h$ can be visited by an LD-trip $dg$. The computation of the earliest possible visit times accounts for: ($i$) the departure time of LD-trip $dg$ from the CD ($dt_0^{dg}$), ($ii$) the loading/unloading times of the LDs ($T_L^{LD}$ and $T_U^{LD}$), and ($iii$) the travel time between different nodes in echelon 1 ($T_{hk}^{LD}$). Constraints \eqref{eq:E1-Const10} compute the earliest time LD $d$ concludes its trip $g$, and returns to the CD. Here, $M_1$ needs to be greater than $\max_{p\in\mathcal P} \{L_p\}$. The earliest possible departure time of LD-trip $dg$ from the CD is determined according to constraints \eqref{eq:E1-Const11} - \eqref{eq:E1-Const12}. Here, constraints \eqref{eq:E1-Const11} - \eqref{eq:E1-Const12} ensure that each LD $d$ can start its trip $g$ only when the following two conditions hold: ($i$) all the packages assigned to LD-trip $dg$ are released (i.e., ready); and ($ii$) LD $d$ returns to the CD from its previous trips and swaps its battery at the CD.

Constraints \eqref{eq:E1-Const15} - \eqref{eq:E1-Const17} compute the package weight each LD-trip $dg$ carries while traveling on each arc within echelon 1 of the network. For each LD-trip $dg$, constraints \eqref{eq:E1-Const16} compute the total package weight while departing the CD, whereas constraints \eqref{eq:E1-Const17} maintain the flow conservation of package weights at each LH $h$. In constraints \eqref{eq:E1-Const15} - \eqref{eq:E1-Const17}, $M_2$ must be greater than $P_{max}^{LD}$. Furthermore, constraints \eqref{eq:E1-Const7} ensure that the total package weight LD $d$ carries on its trip $g$ does not exceed the PWCC of the LDs.

Each constraint \eqref{eq:E1-Const18} computes the energy consumption of an LD-trip $dg$ during travel through arc $(h,k)\in\mathcal E$. The energy consumption on each arc comprises: ($i$) the base energy consumption of an LD to travel on that arc, accounting for the drone type, drone speed, and travel distance; and ($ii$) the increment in the energy consumption with respect to the weight of the packages the drone carries on that arc. Furthermore, constraints \eqref{eq:E1-Const19} ensure that the total energy consumption in each LD-trip does not exceed the available battery capacity of the LDs (i.e., $B_{max}^{LD} - B_{min}^{LD}$). This ensures that the LDs have sufficient remaining energy in their batteries upon returning to the CD. Constraints \eqref{eq:E1-Const20} define the restrictions on the binary and continuous decision variables.

\vspace{-10pt}
\paragraph{Interconnection Between the Two Echelons:}

\begin{align}
    & \beta_{ph} = \sum_{d\in\mathcal D} \sum_{g\in\mathcal G^{LD}} w_{ph}^{dg}, \quad \forall p\in\mathcal P,\, h\in\mathcal H \label{eq:Int-Const1}\\
    & at_h^{dg} - M_1 (1 - w_{ph}^{dg}) \leq \pi_p \leq at_h^{dg} + M_1 (1 - w_{ph}^{dg}), \quad \forall p\in\mathcal P,\, h\in\mathcal H,\, d\in\mathcal D,\, g\in\mathcal G^{LD}  \label{eq:Int-Const2}\\
    & \beta_{ph} \in \{0, 1\}, \quad \pi_p \in \mathbb{R}^+
\end{align}

\vspace{-5pt}Constraints \eqref{eq:Int-Const1} determine the delivery destination of each package $p\in\mathcal P$ in echelon 1 of the network. Here, $\beta_{ph}=1$ represents that package $p$ is delivered to LH $h\in\mathcal H$. Furthermore, constraints \eqref{eq:Int-Const2} compute the delivery time of each package to an LH in layer 2 of the network. Therefore, $\beta_{ph}$ and $\pi_p$ provide the availability information of each package for echelon 2 of the network. $\beta_{ph}$ represents the origin of package $p$, and $\pi_p$ determines the earliest time that package $p$ is available for the last-mile delivery operations in echelon 2. 

\vspace{-10pt}
\paragraph{Opening the ALs and Allocating the AMBs to the ALs:}

\begin{align}
    & \sum_{m\in\mathcal M} \gamma_{ml} = \alpha_l, \quad \forall l\in\mathcal L \label{eq:LocAlloc-Const1}\\
    & \sum_{l\in\mathcal L} \gamma_{ml} = 1, \quad \forall m\in\mathcal M \label{eq:LocAlloc-Const2} \\
    & \alpha_l, \gamma_{ml} \in \{0, 1\}
\end{align}

\vspace{-5pt}Constraints \eqref{eq:LocAlloc-Const1} determine AL opening and ensure that exactly one AMB is allocated to each opened AL. Constraints \eqref{eq:LocAlloc-Const2} ensure that each AMB is allocated to exactly one AL.

\vspace{-10pt}
\paragraph{Echelon 2:}

\begin{align}
    & \sum_{i\in\mathcal I} \theta_{hi}^t = \sum_{j\in\mathcal I} \theta_{jh'}^t, \quad \forall h\in\mathcal H,\, t\in\mathcal T_h \label{E2-Const1}\\
    & \sum_{i\in\mathcal I \cup \{h\}} \theta_{ij}^t = \sum_{i\in\mathcal I \cup \{h'\}} \theta_{ji}^t = r_{jh}^t, \quad \forall h\in\mathcal H,\, t\in\mathcal T_h ,\, j\in\mathcal I  \label{E2-Const2} \\
    & \sum_{l\in\mathcal L} \theta_{hl}^t = \sum_{o\in\mathcal L} \theta_{oh'}^t, \quad \forall h\in\mathcal H,\, t\in\mathcal T_h \label{E2-Const3}\\
    & \sum_{l\in\mathcal L \cup \{h\}} \theta_{lo}^t = \sum_{l\in\mathcal L \cup \{h'\}} \theta_{ol}^t = r_{oh}^t, \quad \forall h\in\mathcal H,\, t\in\mathcal T_h,\, o\in\mathcal L \label{E2-Const4}\\
    & \sum_{i\in\mathcal I} z_{hi}^{vg'} = \sum_{j\in\mathcal I} z_{jh'}^{vg'} = \mu_h^{vg'}, \quad \forall h\in\mathcal H,\, v\in\mathcal V_h, \, g'\in\mathcal G^{MD} \label{E2-Const5}\\
    & \sum_{i\in\mathcal I \cup \{h\}} z_{ij}^{vg'} = \sum_{i\in\mathcal I \cup \{h'\}} z_{ji}^{vg'} = y_{jh}^{vg'}, \quad \forall h\in\mathcal H,\, j\in\mathcal I, \, v\in\mathcal V_h,\, g'\in\mathcal G^{MD} \label{E2-Const6}\\
    & \sum_{l\in\mathcal L} z_{hl}^{vg'} = \sum_{o\in\mathcal L} z_{oh'}^{vg'} = \mu_h^{vg'}, \quad \forall h\in\mathcal H,\, v\in\mathcal V_h, \, g'\in\mathcal G^{MD} \label{E2-Const7}\\
    & \sum_{l\in\mathcal L \cup \{h\}} z_{lo}^{vg'} = \sum_{l\in\mathcal L \cup \{h'\}} z_{ol}^{vg'} = y_{oh}^{vg'}, \quad \forall h\in\mathcal H,\, o\in\mathcal L, \, v\in\mathcal V_h,\, g'\in\mathcal G^{MD} \label{E2-Const8} \\
    & \sum_{h\in\mathcal H}\sum_{t\in\mathcal T_h} r_{ih}^t + \sum_{h\in\mathcal H} \sum_{v\in\mathcal V_h} \sum_{g'\in\mathcal G^{MD}} y_{ih}^{vg'} = 1,\quad \forall i\in\mathcal I \label{E2-Const9_0}\\
    & \sum_{h\in\mathcal H}\sum_{t\in\mathcal T_h} r_{lh}^t + \sum_{h\in\mathcal H} \sum_{v\in\mathcal V_h} \sum_{g'\in\mathcal G^{MD}} y_{lh}^{vg'} = \alpha_l,\quad \forall l\in\mathcal L \label{E2-Const9}\\
    & \sum_{t\in\mathcal T_h}r_{ih}^t + \sum_{v\in\mathcal V_h} \sum_{g'\in\mathcal G^{MD}}  y_{ih}^{vg'} = \beta_{ih}, \quad \forall i\in\mathcal I,\, h\in\mathcal H \label{E2-Const10}\\
    & \sum_{t\in\mathcal T_h}r_{lh}^t + \sum_{v\in\mathcal V_h} \sum_{g'\in\mathcal G^{MD}}  y_{lh}^{vg'} = \sum_{m\in\mathcal M} \beta_{mh} \cdot \gamma_{ml}, \quad \forall l\in\mathcal L,\, h\in\mathcal H \label{E2-Const11} \\
    & y_{ih}^{vg'} \leq \mu_h^{vg'}, \quad \forall h\in\mathcal H,\, v\in\mathcal V_h,\, g'\in\mathcal G^{MD},\, i\in\mathcal I \label{E2-Const16}\\
    & y_{lh}^{vg'} \leq \mu_h^{vg'}, \quad \forall h\in\mathcal H,\, v\in\mathcal V_h,\, g'\in\mathcal G^{MD},\, l\in\mathcal L \label{E2-Const17} \\
    & \mu_h^{vg'} \leq \mu_h^v, \quad \forall h\in\mathcal H,\, v\in\mathcal V_h,\, g'\in\mathcal G^{MD} \label{E2-Const18} \\
    & P_i \cdot r_{ih}^t \leq P_{max}^t, \quad \forall h\in\mathcal H,\, t\in\mathcal T_h,\, i \in \mathcal I \label{E2-Const19}\\
    & P_m \cdot r_{lh}^t \leq P_{max}^t + M (1 - \gamma_{ml}), \quad \forall h\in\mathcal H,\, t\in\mathcal T_h,\, l \in \mathcal L,\, m\in\mathcal M \label{E2-Const20} \\
    & \sum_{i\in\mathcal I} P_i \cdot y_{ih}^{vg'} \leq P_{max}^{MD}, \quad h\in\mathcal H,\, \forall v\in\mathcal V_h,\, g'\in\mathcal G^{MD} \label{E2-Const21}\\
    & \sum_{l\in\mathcal L} \sum_{m\in\mathcal M}P_m \cdot \gamma_{ml} \cdot y_{lh}^{vg'} \leq P_{max}^{MD}, \quad h\in\mathcal H,\, \forall v\in\mathcal V_h,\, g'\in\mathcal G^{MD} \label{E2-Const22} \\
    & \sum_{i\in\mathcal I} y_{ih}^{vg'} \leq P_N^{MD}, \quad h\in\mathcal H,\, v\in\mathcal V_h,\, g'\in\mathcal G^{MD} \label{E2-Const23}\\
    & \sum_{l\in\mathcal L} y_{lh}^{vg'} \leq P_N^{MD}, \quad h\in\mathcal H,\, v\in\mathcal V_h,\, g'\in\mathcal G^{MD} \label{E2-Const23-2} \\
    & \delta_j \geq \pi_j + T_{bat}^t \cdot s_{ih}^t + T_L^t + T_U^t + T_{hj}^t - M_1 (1- \theta_{ij}^t), \quad \forall h\in\mathcal H,\, t\in\mathcal T_h,\, i\in\mathcal I\cup h, \, j\in\mathcal I, \, i \neq j \label{E2-Const24}\\
    & \delta_m \geq \pi_m + T_{bat}^t \cdot s_{oh}^t + T_L^t + T_U^t + T_{hl}^t - M_1 (1- \theta_{ol}^t) - M_1 (1- \gamma_{ml}) - M_1 (1 - \gamma_{no}), \label{E2-Const26}\\
    & \qquad \quad \forall h\in\mathcal H,\, t\in\mathcal T_h,\,  l\in\mathcal L, \, o\in\mathcal L \cup h, \, l \neq o,\, m,n \in\mathcal M,\, m \neq n  \nonumber \\
    & \delta_j \geq \delta_i + T_{bat}^t \cdot s_{ih}^t + T_L^t + T_U^t + T_{ih}^t + T_{hj}^t - M_1 (1 - \theta_{ij}^t), \quad \forall h\in\mathcal H,\, t\in\mathcal T_h,\,  i,j\in\mathcal I,\, i \neq j \label{E2-Const25}\\
    & \delta_n \geq \delta_m + T_{bat}^t \cdot s_{lh}^t + T_L^t + T_U^t + T_{lh}^t + T_{ho}^t - M_1 (1 - \theta_{lo}^t) - M_1 (1 - \gamma_{ml}) - M_1 (1 - \gamma_{no}), \label{E2-Const27}\\
    & \qquad \quad \forall h\in\mathcal H,\, t\in\mathcal T_h,\,  l,o\in\mathcal L,\, l \neq o,\, m,n \in\mathcal M,\, m \neq n \nonumber \\
    & \delta_j \geq pt_h^{vg'}  + T_L^{MD} + T_U^{MD} + T_{hj}^{MD} - M_1 (1- z_{hj}^{vg'}), \quad \forall h\in\mathcal H,\, j\in\mathcal I,\, v\in\mathcal V_h,\, g'\in\mathcal G^{MD} \label{E2-Const28}\\
    & \delta_m \geq pt_h^{vg'} + T_L^{MD} + T_U^{MD} + T_{hl}^{MD} - M_1 (1- z_{hl}^{vg'}) - M_1 (1 - \gamma_{ml}), \label{E2-Const29}\\
    & \qquad \quad \forall h\in\mathcal H,\, l\in\mathcal L,\, m\in\mathcal M,\, v\in\mathcal V_h,\, g'\in\mathcal G^{MD} \nonumber \\
    & \delta_j \geq \delta_i + T_U^{MD} + T_{ij}^{MD} - M_1 (1 - z_{ij}^{vg'}), \quad \forall h\in\mathcal H,\,  i,j\in\mathcal I,\, i \neq j,\, v\in\mathcal V_h,\, g'\in\mathcal G^{MD} \label{E2-Const30}\\
    & \delta_n \geq \delta_m + T_U^{MD} + T_{lo}^{MD} - M_1 (1 - z_{lo}^{vg'}) - M_1 (1 - \gamma_{ml}) - M_1 (1 - \gamma_{no}), \label{E2-Const31}\\
    & \qquad  \quad \forall h\in\mathcal H,\,  l,o\in\mathcal L,\, l \neq o,\, m,n \in\mathcal M,\, m \neq n,\, v\in\mathcal V_h,\, g'\in\mathcal G^{MD} \nonumber \\
    & bt_h^{vg'} \geq \delta_i + T_{ih'}^{MD} - M_1 (1 - z_{ih'}^{vg'}), \quad \forall h\in\mathcal H,\, i\in\mathcal I,\, v\in\mathcal V_h,\, g'\in\mathcal G^{MD} \label{E2-Const32}\\
    & bt_h^{vg'} \geq \delta_m + T_{lh'}^{MD} - M_1 (1 - z_{lh'}^{vg'}) - M_1 (1 - \gamma_{ml}), \label{E2-Const33}\\
    & \qquad \quad \forall h\in\mathcal H,\, l\in\mathcal L,\, m\in\mathcal M,\, v\in\mathcal V_h,\, g'\in\mathcal G^{MD}  \nonumber \\
    & pt_h^{vg'}\geq \pi_i - M_1 (1-y_{ih}^{vg'}), \quad \forall h\in\mathcal H,\, i\in\mathcal I,\, v\in\mathcal V_h,\, g'\in\mathcal G^{MD} \label{E2-Const34}\\
    & pt_h^{vg'}\geq \pi_m - M_1 (1-y_{lh}^{vg'}) - M_1 (1 - \gamma_{ml}), \quad \forall h\in\mathcal H,\, l\in\mathcal L,\, m\in\mathcal M,\, v\in\mathcal V_h,\, g'\in\mathcal G^{MD} \label{E2-Const35}\\
    & pt_h^{vg'} \geq bt_h^{v,g''} + T_{bat}^{MD}, \quad \forall h\in\mathcal H,\,  v\in\mathcal V_h,\, g', g''\in\mathcal G^{MD},\, g'' < g' \label{E2-Const36} \\
    & \delta_p \leq L_p, \quad \forall p\in\mathcal P \label{E2-Const37} \\
    & pw_{ij}^{vg'} \leq M_3 \cdot z_{ij}^{vg'}, \quad \forall h\in\mathcal H,\, (i,j)\in\mathcal A_h,\, v\in\mathcal V_h,\, g'\in\mathcal G^{MD} \label{E2-Const38}\\
    & pw_{lo}^{vg'} \leq M_3 \cdot z_{lo}^{vg'}, \quad \forall h\in\mathcal H,\, (l,o)\in\mathcal B_h,\, v\in\mathcal V_h,\, g'\in\mathcal G^{MD} \label{E2-Const39}\\
    & -M_3 (1- z_{hi}^{vg'}) \leq pw_{hi}^{vg'} - \sum_{j\in\mathcal I}P_j \cdot y_{jh}^{vg'} \leq M_3 (1- z_{hi}^{vg'}), \quad \forall h\in\mathcal H,\, i\in\mathcal I,\, v\in\mathcal V_h,\, g'\in\mathcal G^{MD} \label{E2-Const40}\\
    & -M_3 (1- z_{hl}^{vg'}) \leq pw_{hl}^{vg'} - \sum_{o\in\mathcal L} \sum_{m\in\mathcal M} P_m \cdot \gamma_{mo} \cdot y_{oh}^{vg'} \leq M_3 (1- z_{hl}^{vg'}), \label{E2-Const41}\\
    & \qquad \quad \forall h\in\mathcal H,\, l\in\mathcal L,\, v\in\mathcal V_h,\, g'\in\mathcal G^{MD} \nonumber \\
    & -M_3 (1- y_{ih}^{vg'}) \leq \sum_{j': (j',i)\in\mathcal A_h} pw_{j'i}^{vg'} - P_i \cdot y_{ih}^{vg'} - \sum_{j: (i,j)\in\mathcal A_h} pw_{ij}^{vg'} \leq M_3 (1- y_{ih}^{vg'}), \label{E2-Const42}\\
    & \qquad  \quad \forall h\in\mathcal H,\, i\in\mathcal I,\, v\in\mathcal V_h,\, g'\in\mathcal G^{MD} \nonumber\\
    & -M_3 (1- y_{lh}^{vg'}) \leq \sum_{o': (o',l)\in\mathcal B_h} pw_{o'l}^{vg'} - \sum_{m\in\mathcal M} P_m \cdot \gamma_{ml} \cdot y_{lh}^{vg'} - \sum_{o: (l,o)\in\mathcal A_h} pw_{lo}^{vg'} \leq M_3 (1- y_{lh}^{vg'}), \label{E2-Const43}\\
    & \qquad  \quad \forall h\in\mathcal H,\, l\in\mathcal L,\, v\in\mathcal V_h,\, g'\in\mathcal G^{MD} \nonumber \\
    & q_{ij}^{vg'} = E_{ij}^{0,MD} \cdot z_{ij}^{vg'} + E_{ij}^{1,MD} \cdot pw_{ij}^{vg'}, \quad \forall h\in\mathcal H,\, (i,j)\in\mathcal A_h,\, v\in\mathcal V_h,\, g'\in\mathcal G^{MD} \label{E2-Const44}\\
    & q_{lo}^{vg'} = E_{lo}^{0, MD} \cdot z_{lo}^{vg'} + E_{lo}^{1, MD} \cdot pw_{lo}^{vg'}, \quad \forall h\in\mathcal H,\, (l,o)\in\mathcal B_h,\, v\in\mathcal V_h,\, g'\in\mathcal G^{MD} \label{E2-Const45}\\
    & \sum_{(i,j)\in\mathcal A_h} q_{ij}^{vg'} \leq B_{max}^{MD} - B_{min}^{MD}, \quad \forall h\in\mathcal H,\, v\in\mathcal V_h,\, g'\in\mathcal G^{MD} \label{E2-Const46}\\
    & \sum_{(l,o)\in\mathcal B_h} q_{lo}^{vg} \leq B_{max}^{MD} - B_{min}^{MD}, \quad \forall h\in\mathcal H,\, v\in\mathcal V_h,\, g'\in\mathcal G^{MD} \label{E2-Const47} \\
    & \rho_{ih}^t = \Big(2E_{ih}^{0,t} + E_{ih}^{1,t} \cdot P_i \Big) r_{ih}^t, \quad \forall h\in\mathcal H,\, t\in\mathcal T_h,\, i\in\mathcal I \label{E2-Const48}\\
    & \rho_{lh}^t = \Big(2E_{lh}^{0,t} + E_{lh}^{1,t} \cdot \sum_{m\in\mathcal M}P_m \cdot \gamma_{ml} \Big) r_{lh}^t, \quad \forall h\in\mathcal H,\, t\in\mathcal T_h,\, l\in\mathcal L \label{E2-Const49} \\
    & r_{ih}^t, r_{lh}^t, \theta_{ij}^t, \theta_{lo}^t, \mu_h^v, \mu_h^{vg'}, y_{ih}^{vg'}, y_{lh}^{vg'}, z_{ij}^{vg'}, z_{lo}^{vg'}, s_{ih}^t, s_{lh}^t \in \{0,1\} \label{E2-Const64}\\
    & \delta_p, bt_h^{vg'}, pt_h^{vg'}, \rho_{ih}^t, \rho_{lh}^t, pw_{ij}^{vg'}, pw_{lo}^{vg'}, q_{ij}^{vg'}, q_{lo}^{vg'} \in \mathbb{R}^{+} \label{E2-Const66} \\
    & \eqref{eq:End-CL-1} - \eqref{eq:End-AMB-7} \label{E2-Const65}
\end{align}

\vspace{-5pt}We model the multi-trip direct delivery schedule of the SDs as a VRP, where we leverage an \enquote{extended route} concept, and use arc flow variables $\theta_{ij}^t$ for each SD type $t$ (refer to \citet{bhuiyan2024aerial} for a detailed exposition of the extended route concept for the multi-trip direct delivery schedule of drones). Using this extended route concept, constraints \eqref{E2-Const1} - \eqref{E2-Const4} and \eqref{E2-Const5} - \eqref{E2-Const8} ensure the flow balance of the SDs and the MDs, respectively, in echelon 2 of the network. As explained in Section \ref{sec:Problem_Description}, each drone in echelon 2 either visits a set of CLs or a set of ALs across its multiple trips. Therefore, we consider separate flow balance constraints for the CLs and the ALs. Specifically, constraints \eqref{E2-Const1} - \eqref{E2-Const2} and \eqref{E2-Const5} - \eqref{E2-Const6} correspond to the CLs, while constraints \eqref{E2-Const3} - \eqref{E2-Const4} and \eqref{E2-Const7} - \eqref{E2-Const8} correspond to the ALs.

Constraints \eqref{E2-Const9_0} ensure that each CL is visited exactly once by a drone. Similarly, if AL $l\in\mathcal L$ is opened for serving the AMBs, AL $l$ must be visited exactly once, as enforced by constraints \eqref{E2-Const9}. Furthermore, constraints \eqref{E2-Const10} - \eqref{E2-Const11} ensure that each CL/AL is served by a single drone from LH $h\in\mathcal H$ if and only if the corresponding package is available at LH $h$. In constraints \eqref{E2-Const11}, the bilinear term $(\beta_{mh} \cdot \gamma_{ml})$ determines whether package $m\in\mathcal M$ is available at LH $h$, and AMB $m$ is allocated to AL $l$. We linearize this bilinear term using the McCormick linearization technique. Constraints \eqref{E2-Const16} - \eqref{E2-Const17} ensure that CL $i$ or AL $l$ can be served via trip $g'\in\mathcal G^{MD}$ of MD $v\in\mathcal V_h$ from LH $h$ only if MD $v$ performs its trip $g'$ from LH $h$. For brevity, we refer to trip $g'$ of MD $v$ as \enquote{MD-trip} $vg'$ throughout the remainder of the paper. Furthermore, MD $v$ can perform its trip $g'$ only if MD $v$ is used in echelon 2, enforced by constraints \eqref{E2-Const18}.

Constraints \eqref{E2-Const19} - \eqref{E2-Const20} ensure that, if the package weight exceeds the PWCC of an SD of type $t$, the corresponding CL/AL cannot be served by this drone type. Constraints \eqref{E2-Const20} also account for the allocation of the AMBs to the ALs, where $P_m \cdot r_{lh}^t \leq P_{max}^t + M (1 - \gamma_{ml})$ is the linear representation of $P_m \cdot r_{lh}^t \cdot \gamma_{ml} \leq P_{max}^t$. Here, $r_{lh}^t \cdot \gamma_{ml} = 1$ represents that the package for AMB $m$ is delivered to AL $l$ by an SD of type $t$.
Constraints \eqref{E2-Const21} - \eqref{E2-Const22} ensure that the total package weight in MD-trip $vg'$ does not exceed the PWCC of the MDs. In constraints \eqref{E2-Const22}, $(\gamma_{ml} \cdot y_{lh}^{vg'})$ is a bilinear term representing whether AMB $m$ is assigned to AL $l$, and MD-trip $vg'$ visits AL $l$ from LH $h$. We linearize this bilinear term using the McCormick linearization technique. Constraints \eqref{E2-Const23} - \eqref{E2-Const23-2} enforce that the number of packages each MD-trip carries does not exceed the spatial capacity of the MDs, $P_N^{MD}$.

Constraints \eqref{E2-Const24} - \eqref{E2-Const27} determine the earliest possible delivery time of the packages carried by the SDs. As SDs perform direct deliveries, in constraints \eqref{E2-Const24} and \eqref{E2-Const26}, the earliest possible delivery time of a package from LH $h$ by an SD of type $t$ accounts for: ($i$) the earliest time the package is available in layer 2 (i.e., the LHs), denoted by $\pi_j$ for CL $j$ and $\pi_m$ for AMB $m$; ($ii$) the time it takes to load and unload the package, denoted by $T_L^t + T_U^t$; ($iii$) the time it takes to swap the battery of an SD of type $t$, if necessary, denoted by $T_{bat}^t$; and ($iv$) the time it takes for an SD of type $t$ to travel from LH $h$ to the corresponding destination, denoted by $T_{hj}^t$ and $T_{hl}^t$ for the CLs and the ALs, respectively. Moreover, constraints \eqref{E2-Const25} and \eqref{E2-Const27} compute the earliest time that consecutive packages carried by the SDs can be delivered to their corresponding destinations. Here, in addition to the loading, unloading, and the battery swapping time, the delivery time of the successive packages in the delivery sequence of each SD of type $t$ accounts for: ($i$) the delivery time of the preceding CL/AMB; and ($ii$) the time it takes for an SD of type $t$ to return to LH $h$ from the preceding CL/AMB, and travel to the subsequent CL/AMB from LH $h$.

Constraints \eqref{E2-Const28} - \eqref{E2-Const36} determine the earliest possible time that the packages carried by the MDs can be delivered to the corresponding delivery destinations (i.e., CLs and AMBs). Unlike SDs, MDs perform circular delivery, carrying multiple packages in each trip. Therefore, constraints \eqref{E2-Const28} and \eqref{E2-Const29} compute the earliest possible delivery time for the first CL and AMB, respectively, in the delivery sequence of MD-trip $vg'$. Constraints \eqref{E2-Const28} and \eqref{E2-Const29} ensure that the delivery time of CL $j$ and AMB $m$---the first delivery destination in the delivery sequence---accounts for: ($i$) the time MD-trip $vg'$ departs LH $h$, denoted by $pt_h^{vg'}$; ($ii$) the time it takes to load the packages, denoted by $T_L^{MD}$; ($iii$) the time it takes for MD $v$ to travel from LH $h$ to the corresponding delivery destination, denoted by $T_{hj}^{MD}$ and $T_{hl}^{MD}$ for the CLs and the ALs, respectively; and ($iv$) the time it takes for an MD to unload the package to the corresponding destination, denoted by $T_U^{MD}$. Furthermore, constraints \eqref{E2-Const30} and \eqref{E2-Const31} compute the earliest possible delivery time of the successive CLs/AMBs in the delivery sequence of each MD-trip $vg'$. As MDs perform circular delivery, the delivery time of the subsequent packages only accounts for: ($i$) the delivery time of the preceding CL/AMB in the delivery sequence, ($ii$) the direct travel time between the two consecutive delivery locations, and ($iii$) the unloading time of the MDs. 

To ensure the time consistency between the two consecutive trips of each MD, constraints \eqref{E2-Const32} and \eqref{E2-Const33} compute the earliest time MD $v$ concludes its trip $g'$ and returns to LH $h$. Then constraints \eqref{E2-Const34} - \eqref{E2-Const36} determine the earliest time MD $v$ can depart LH $h$ for its trip $g'$. Here, constraints \eqref{E2-Const34} and \eqref{E2-Const35} ensure that the departure time of MD-trip $vg'$ from LH $h$ occurs after all the packages assigned to MD-trip $vg'$ are available at LH $h$. Moreover, constraints \eqref{E2-Const36} ensure that MD $v$ departs LH $h$ for its trip $g'$ after MD $v$ returns to LH $h$ from its previous trips and swaps its battery at LH $h$.
Constraints \eqref{E2-Const37} ensure that each package $p$ is delivered to its corresponding delivery destination before its delivery due time, $L_p$.

Constraints \eqref{E2-Const38} - \eqref{E2-Const43} compute the package weight each MD-trip $vg'$ carries. Here, constraints \eqref{E2-Const40} - \eqref{E2-Const41} compute the total package weight MD-trip $vg'$ carries while departing LH $h$. Moreover, similar to constraints \eqref{eq:E1-Const17}, constraints \eqref{E2-Const42} - \eqref{E2-Const43} ensure the flow conservation of package weights across intermediate delivery locations (i.e., CLs and ALs). Here, $M_3$ needs to be greater than $P_{max}^{MD}$. Constraints \eqref{E2-Const44} - \eqref{E2-Const45} compute the energy consumption of the MDs while traveling on each arc within echelon 2 of the network. Then constraints \eqref{E2-Const46} - \eqref{E2-Const47} ensure that the MDs maintain the minimum required energy (i.e., $B_{min}^{MD}$) upon returning to the LHs.

Unlike MDs, SDs perform direct deliveries by serving each delivery location and returning to the LH at each trip. Therefore, constraints \eqref{E2-Const48} and \eqref{E2-Const49} compute the total energy each SD of type $t$ consumes while serving a CL and an AMB, respectively. Here, the terms $2E_{ih}^{0,t}$ and $2E_{lh}^{0,t}$ account for returning empty to the LH after delivering a package to the corresponding CL/AL. Moreover, we linearize the bilinear term $(\gamma_{ml} \cdot r_{lh}^t)$ using the McCormick linearization technique. Constraints \eqref{E2-Const64} and \eqref{E2-Const66} define the restrictions on the binary and continuous decision variables, respectively.

As explained in Section \ref{sec:Problem_Description}, in contrast to the MDs that swap their batteries each time they return to the LHs, SDs swap their batteries on an as-needed fashion (i.e., only when needed). For brevity, the mathematical expressions of the endogenous battery swapping of the SDs are presented in Eqs. \eqref{eq:End-CL-1} - \eqref{eq:End-AMB-7} in Appendix~\ref{Appendix_Endogenous}. These constraints compute the potential remaining energy in the drone batteries, and determine the battery swapping decisions for the SDs (i.e., $s_{ih}^t$ and $s_{lh}^t$).

\vspace{-10pt}
\section{Solution Method}\label{sec:Solution_Method}

Our \textit{HDDNM} problem is NP-hard, as proved in Proposition \ref{prop:NP-hard}. To efficiently solve this computationally difficult problem, we propose a customized exact solution method integrating the following set of enhancement strategies: ($i$) pre-processing and search space reduction, ($ii$) problem-specific reformulations, ($iii$) valid inequalities, and ($iv$) dynamic cutting planes generation. We also develop a heuristic algorithm, which leverages a simplified variant of the \textit{HDDNM} problem to provide good-quality solutions within a reasonable runtime.

\begin{prop}\label{prop:NP-hard}
    The \textit{HDDNM} problem is NP-hard.
\end{prop}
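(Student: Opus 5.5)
We prove the statement by showing that a known NP-hard problem is a special case of \textit{HDDNM}. Equivalently, we give a polynomial-time reduction from a classical NP-hard problem to the decision version of \textit{HDDNM}. The natural candidate is the capacitated vehicle routing problem (CVRP), or its special case the traveling salesman problem (TSP). We will embed it in echelon 2, because there MDs perform circular deliveries and the arc costs enter the objective linearly through the energy variables $q_{ij}^{vg'}$.

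\textbf{Step 1: collapse echelon 1.} Take a single LH, $|\mathcal H|=1$. Let every package have release time $R_p=0$. Set the LD cost, battery cost and energy data so that one LD trip carries all packages at a fixed, known cost. Concretely, use $P_{max}^{LD}\ge\sum_p P_p$, a large $B_{max}^{LD}$, and $E^{1,LD}_{hk}=0$. Then every feasible solution contains the same echelon-1 cost, and each $\pi_p$ equals a constant.

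\textbf{Step 2: collapse the AMB and SD parts.} Let $\mathcal M=\mathcal L=\emptyset$, so there are no allocation decisions. Remove SDs by taking $\mathcal T_h=\emptyset$, or by choosing $P_{max}^t<\min_i P_i$ so that constraints \eqref{E2-Const19} force $r_{ih}^t=0$.

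\textbf{Step 3: reduce echelon 2 to a TSP.} Use one MD with a single trip, $|\mathcal V_h|=|\mathcal G^{MD}|=1$. Take $P_N^{MD}\ge|\mathcal I|$, $P_{max}^{MD}\ge\sum_i P_i$, and battery capacity large enough. Set the due times $L_p$ large, so constraints \eqref{E2-Const37} are slack. Set $E^{1,MD}_{ij}=0$ and $E^{0,MD}_{ij}=c_{ij}$, the TSP distances. With $C^E=1$, the objective equals a constant plus $\sum_{(i,j)}c_{ij}z_{ij}^{vg'}$. Constraints \eqref{E2-Const5}--\eqref{E2-Const6} together with the MTZ-type time constraints \eqref{E2-Const28}, \eqref{E2-Const30} and \eqref{E2-Const32} force $z$ to be a single Hamiltonian cycle through $h$ and all CLs. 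Subtours are eliminated because $\delta_j\ge\delta_i+T^{MD}_U+T^{MD}_{ij}$ with positive travel times. Hence an optimal \textit{HDDNM} solution yields an optimal TSP tour, and the reverse direction is immediate. The whole construction is polynomial in size.

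\textbf{Main obstacle.} The delicate part is checking that the remaining side constraints neither cut off any tour nor admit a non-tour solution. In particular, we must fix the big-$M$ values, positive travel times, the battery bound, and the timing and due-time parameters so that every Hamiltonian cycle is feasible and only such cycles are feasible. If this parameter bookkeeping with a single trip proves awkward, the fallback is to allow multiple MD trips with capacity $P_{max}^{MD}$. This gives the CVRP with fixed vehicle-use costs, which is also NP-hard and strongly so. In either case the conclusion is that \textit{HDDNM} generalizes an NP-hard routing problem and is therefore NP-hard.
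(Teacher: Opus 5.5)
Your reduction is correct. It follows the paper's basic strategy: one vehicle on one trip, slack capacity, battery and due times, and arc costs entering only through the energy term, so that optimal solutions are optimal TSP tours. Where it differs is in where the TSP is embedded.

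The paper drops layer 2 entirely and lets one LD tour the CD and the CLs directly ("ignoring the hierarchical design"). This is short and never touches the echelon-2 machinery, but it is a special case of \textit{HDDNM} only in spirit. In the model, LDs visit only LHs, and a package dropped at any LH can still be finished in echelon 2. So the claim that the vehicle must visit every node needs extra restrictions the paper does not spell out.

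You instead keep the hierarchy and trivialize it: $|\mathcal H|=1$, one LD-trip of fixed cost, and no SDs or AMBs. The TSP then lives in the single MD-trip out of $h$. The correspondence can be checked constraint by constraint, with subtours excluded by \eqref{E2-Const30} and positive $T^{MD}_{ij}$.

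Two bookkeeping points remain.

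\textbf{Fix the echelon-1 fleet.} Set $|\mathcal D|=|\mathcal G^{LD}|=1$. Otherwise the echelon-1 cost is fixed only at optimality, not in every feasible solution. Also, $\pi_p$ is only bounded below rather than constant, which is harmless once the $L_p$ are large.

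\textbf{Constraint \eqref{E2-Const7} breaks the empty-AL case.} In the \textit{BaM} as literally written, \eqref{E2-Const7} with $\mathcal L=\emptyset$ reads $0=0=\mu_h^{vg'}$. This forbids every MD-trip and, with the SDs removed, makes your instance infeasible. There are two fixes:
\begin{itemize}
\item argue at the level of the problem description in Section~\ref{sec:Problem_Description}, where a CL-serving MD never visits an AL; or
\item keep one dummy AMB and one AL. Then \eqref{E2-Const5} and \eqref{E2-Const7} attach an $h\to l\to h'$ loop to the single MD-trip. Its cost is constant when $E^{1,MD}=0$ on those arcs, so the objective is still a constant plus the tour length.
\end{itemize}
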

\vspace{-5pt}
The proofs of Propositions \ref{prop:NP-hard}, \ref{prop:Return_Time}, \ref{prop:Symmetry-Breaking}, \ref{prop:Last_Hub}, and \ref{prop:Conservative} stated in this section are provided in Appendix~\ref{Appendix_Proof}.

\vspace{-5pt}
\subsection{Pre-Processing and Search Space Reduction}\label{subsec:preprocess}

To improve the computational efficiency in solving the \textit{BaM} (i.e., Eqs. \eqref{eq:obj} - \eqref{E2-Const65}), we apply a set of pre-processing procedures to reduce the problem size (i.e., number of variables and constraints). 

\vspace{-5pt}
\subsubsection{Restricted Sets}\label{subsubsec:Restricted_Sets}

The physical characteristics of each drone type---speed, battery capacity, and PWCC---limit its ability to deliver certain packages from some LHs. Therefore, we define $\mathcal I_h^{a} = \{ i\in\mathcal I: P_i \leq P_{max}^{a}, \; 2E_{ih}^{0, a} + E_{ih}^{1, a} P_i \leq B_{max}^{a} - B_{min}^{a}, \; T_L^{a}+T_{hi}^{a}+T_U^{a}+T_L^{LD}+T_{0h}^{LD}+T_U^{LD} \leq L_i - R_i\}$ and $\mathcal L_h^{a} = \{ l\in\mathcal L: \bar{P}_m \leq P_{max}^{a}, \; 2E_{lh}^{0, a} + E_{lh}^{1, a} \bar{P}_m \leq B_{max}^{a} - B_{min}^{a}, \; T_L^{a}+T_{hl}^{a}+T_U^{a}+T_L^{LD}+T_{0h}^{LD}+T_U^{LD} \leq \max_{m\in\mathcal M} (L_m - R_m)\}$ as the restricted sets of CLs and ALs, respectively, that can be feasibly served by drone type $a\in \{\mathcal T_h\cup MD\}$ from LH $h$. A detailed explanation for these two restricted sets is provided in Appendix~\ref{Appendix_Restricted_Sets}. Using these restricted sets $\mathcal I_h^{a}$ and $\mathcal L_h^{a}$, we then define the restricted sets of arcs in echelon 2 as follows: $\mathcal A_h^{a} = \{(h,i):i\in\mathcal I_h^{a} \} \cup \{(i,j) \in\mathcal I_h^{a} \times \mathcal I_h^{a}: i \neq j \} \cup \{(i,h'): i\in\mathcal I_h^{a} \}$, and $\mathcal B_h^{a} = \{(h,l):l\in\mathcal L_h^{a} \} \cup \{(l,o) \in\mathcal L_h^{a} \times \mathcal L_h^{a}: l \neq o \} \cup \{(l,h'): l\in\mathcal L_h^{a} \}$. We directly incorporate these sets (i.e., $\mathcal I_h^a$, $\mathcal L_h^a$, $\mathcal A_h^a$, and $\mathcal B_h^a$) in constraints \eqref{E2-Const1} - \eqref{E2-Const49} to reduce the number of constraints.

\vspace{-5pt}
\subsubsection{Variable Fixings}\label{subsubsec:variable_fixing}

To reduce the number of variables, we introduce the following problem-specific variable fixings, which reduces the solution space, thus improving the computational efficiency. 

\vspace{-10pt}
\paragraph{Infeasible Hub Destinations:}

For each CL $i\in\mathcal I$ and LH $h\in\mathcal H$, if $i\notin \cup_{a\in \{\mathcal T_h\cup MD\}} \mathcal I_h^a$, i.e., delivering package $i$ from LH $h$ is not possible with any drone in echelon 2, we set $\beta_{ih}=0$ and $w_{ih}^{dg}=0, \forall d\in\mathcal D,\, g\in\mathcal G^{LD}$. We note that, as the AMB locations are not pre-determined delivery locations, this variable fixing does not apply to the ALs and the AMBs.

\vspace{-10pt}
\paragraph{Infeasible Hub Origins:}

If delivering package $i$ from LH $h$ to the corresponding CL is not possible with drone type $a\in \{\mathcal T_h\cup MD\}$, LH $h$ cannot be the origin of package $i$ in echelon 2 for drone type $a$. Therefore, we apply variable fixings \eqref{VF-HO-CL-1} - \eqref{VF-HO-AL-4} in Appendix~\ref{Appendix_VF}.

\vspace{-10pt}
\paragraph{Infeasible CL Pairs:}

We also identify CL pairs $(i,j)$ that cannot be feasibly served consecutively by a single drone, using Algorithm~\ref{alg:Feasibility_Check} in Appendix~\ref{Appendix_Algorithms}. For each drone type $a\in \{\mathcal T_h\cup MD\}$ and LH $h\in\mathcal H$, we compute the earliest possible delivery times of each CL pair $(i,j)\in\mathcal A_h^a$ in a given path $h \to i \to j \to h'$. If the earliest possible delivery times violate the delivery due times and/or the energy consumption exceeds the drone's battery capacity, CL pair $(i,j)$ is marked as infeasible for drone type $a$ from LH $h$. We then fix $\theta_{ij}^t=0$ and/or $z_{ij}^{vg'}=0, \forall v\in\mathcal V_h,\, g'\in\mathcal G^{MD}$, if CL pair $(i,j)$ cannot be feasibly served consecutively with an SD of type $t$ and/or an MD, respectively, from LH $h$. This variable fixing is also only applicable to the CLs.

\vspace{-5pt}
\subsection{Problem-Specific Reformulations}\label{subsec:reformulation}

We introduce new reformulations to replace selected big-M constraints of the \textit{BaM} (i.e., Eqs. \eqref{eq:obj} - \eqref{E2-Const65}) with big-M-free equations, which tighten the LP-relaxation.

\vspace{-5pt}
\subsubsection{Return Time Reformulation}

As discussed in Section \ref{sec:Problem_Description}, LDs and MDs perform multiple circular delivery trips, where constraints \eqref{eq:E1-Const10} and \eqref{E2-Const32} of the \textit{BaM} determine the return time of each trip of the LDs and the MDs, respectively. However, these constraints include big-M values resulting in a weaker LP-relaxation.

\vspace{-5pt}
\begin{prop} \label{prop:Return_Time}
    For any drone performing multiple trips, a feasible schedule in which the drone returns to either the CD (in echelon 1) or the origin LH (in echelon 2) immediately after serving all the locations in the sequence dominates an identical schedule with an idle time during the trip.
\end{prop}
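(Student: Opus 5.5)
We prove the statement by an exchange argument. Take any feasible solution of the \textit{BaM} in which some LD-trip $dg$ (echelon 1) or some MD-trip $vg'$ (echelon 2) contains idle time after its last delivery. We build a modified solution that removes this idle time and show three things: it is feasible, it has the same objective value, and none of its later timing variables is larger than before. Together these give dominance.

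\textbf{Step 1 (construction).} For an LD-trip $dg$ whose last visited hub is $h$ (so $x_{hS}^{dg}=1$), we keep all routing, assignment, payload and energy variables unchanged. We set $\tilde{rt}_0^{dg}=at_h^{dg}+T_{hS}^{LD}$, which is the smallest value allowed by \eqref{eq:E1-Const10}. For an MD-trip $vg'$ from LH $h$, we likewise set $\tilde{bt}_h^{vg'}=\delta_i+T_{ih'}^{MD}$, where $i$ is the last CL on the trip ($z_{ih'}^{vg'}=1$). For AL trips we use the corresponding expression from \eqref{E2-Const33}, with the AMB $m$ allocated to the last AL. The original solution satisfies these constraints, so we have $\tilde{rt}_0^{dg}\le rt_0^{dg}$ and $\tilde{bt}_h^{vg'}\le bt_h^{vg'}$.

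\textbf{Step 2 (feasibility).} Return times appear in exactly two places: their defining constraints \eqref{eq:E1-Const10} and \eqref{E2-Const32}--\eqref{E2-Const33}, and the lower bounds on the next departure in \eqref{eq:E1-Const12} and \eqref{E2-Const36}. The defining constraints hold with equality for the active last arc. For the other arcs they are relaxed by the big-M term, because $M_1>\max_p L_p$ exceeds every relevant time. The departure-time constraints are of the form ``$\ge$ return time $+$ swap time''. Lowering the right-hand side therefore keeps them satisfied for the unchanged departure times. No other constraint involves $rt_0^{dg}$ or $bt_h^{vg'}$: the payload, energy, due-time and allocation constraints do not reference them. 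The modified solution is therefore feasible.

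\textbf{Step 3 (dominance).} The objective \eqref{eq:obj} does not contain $rt_0^{dg}$ or $bt_h^{vg'}$, so the cost is unchanged. The earlier return also weakly enlarges the set of feasible departure times for the drone's later trips. So every continuation that was feasible before remains feasible, and subsequent trips, hence deliveries $\delta_p$, may start earlier without violating $\delta_p\le L_p$. The immediate-return schedule is therefore at least as good, and weakly more flexible.

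\textbf{Main obstacle.} The delicate part is making sure that ``idle time'' is removed only after the last delivery, so that no $\delta_p$ or $at_h^{dg}$ is changed. If idle time occurs in the middle of a trip, we shift every subsequent arrival or delivery time down to its lower bound from \eqref{eq:E1-Const9} and \eqref{E2-Const30}--\eqref{E2-Const31}, proceeding in route order. We then check that \eqref{eq:Int-Const2} and the coupling into echelon 2 through $\pi_p$ only receive earlier availability times. Earlier availability loosens \eqref{E2-Const24}, \eqref{E2-Const34} and \eqref{E2-Const35}, so feasibility is preserved by monotonicity. This also justifies replacing the big-M inequalities by the corresponding equalities.
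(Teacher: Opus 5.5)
Your proposal is correct and follows essentially the same exchange argument as the paper: return times enter only as lower bounds on later departures in \eqref{eq:E1-Const12} and \eqref{E2-Const36} and not in the objective, so removing idle time preserves feasibility and cost while weakly enlarging the departure options of subsequent trips. Your version is more explicit than the paper's, with a constraint-by-constraint feasibility check and a separate treatment of mid-trip idle time via earlier $at_h^{dg}$, $\delta_p$ and hence $\pi_p$; one small detail to add is that the arrival variables $at_k^{dg}$ of hubs the trip does not visit may need to be reset to small values so that the inactive big-M rows of \eqref{eq:E1-Const10} stay slack.
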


\vspace{-5pt}
According to Proposition \ref{prop:Return_Time}, the return time of any drone-trip can be expressed as the earliest possible completion time of the trip, without loss of optimality. Therefore, for the LDs in echelon 1, we reformulate constraints \eqref{eq:E1-Const10} to constraints \eqref{eq:E1-Const10-New}. Similarly, for the MDs in echelon 2, we reformulate constraints~\eqref{E2-Const32} to constraints~\eqref{eq:E2-Const32-New}. Here, $T_L^{LD} + \sum_{(h,k)\in\mathcal A, k \neq S} (T_{hk}^{LD} + T_U^{LD}) \cdot x_{hk}^{dg}$ and $T_L^{MD} + \sum_{(i, j)\in\mathcal A_h, j \neq h'} (T_{ij}^{MD} + T_U^{MD} ) \cdot z_{ij}^{v g'}$ compute the total completion time of LD-trip $dg$ and MD-trip $vg'$, respectively.

\begin{align}
    & rt_0^{dg} = dt_0^{dg} + T_L^{LD} + \sum_{(h,k)\in\mathcal E, k \neq S} (T_{hk}^{LD} + T_U^{LD}) \cdot x_{hk}^{dg} + \sum_{h\in\mathcal H} x_{hS}^{dg} \cdot T_{hS}^{LD}, \quad \forall d\in\mathcal D,\, g\in\mathcal G \label{eq:E1-Const10-New} \\
    & bt_h^{v g'} = pt_h^{v g'} + T_L^{MD} + \sum_{(i, j)\in\mathcal A_h, j \neq h'} (T_{ij}^{MD} + T_U^{MD} ) \cdot z_{ij}^{v g'} + \sum_{i\in\mathcal I} z_{i h'}^{vg'} \cdot T_{i h'}^{MD}, \label{eq:E2-Const32-New} \\
    &  \qquad \quad \forall h\in\mathcal H,\, v\in\mathcal V_h,\, g' \in \mathcal G^{MD}  \nonumber
\end{align}

\vspace{-20pt}\subsubsection{Network Flow Reformulation for Echelon 1}\label{subsubsec:NF-Reformulation}

In this section, we introduce a multi-commodity network flow formulation for the LDs in echelon 1. We first define binary variable $f_{hkp}^{dg}$, representing whether package (i.e., commodity) $p$ flows through arc $(h, k)\in\mathcal E$ by LD-trip $dg$. We then reformulate constraints~\eqref{eq:E1-Const15} - \eqref{eq:E1-Const7} to constraints~\eqref{NF-1} - \eqref{NF-6}.

\begin{align}
    & \sum_{h\in\mathcal H}f_{0hp}^{dg} = \sum_{h\in\mathcal H}w_{ph}^{dg}, \quad \forall p\in\mathcal P, \; d\in\mathcal D,\; g\in\mathcal G^{LD} \label{NF-1}\\
    & \sum_{k':(k',h)\in\mathcal E}f_{k'hp}^{dg} - \sum_{k:(h,k)\in\mathcal E}f_{hkp}^{dg} = w_{ph}^{dg}, \quad \forall h\in\mathcal H,\; p\in\mathcal P,\; d\in\mathcal D,\; g\in\mathcal G^{LD} \label{NF-2}\\
    & f_{hkp}^{dg} \leq x_{hk}^{dg}, \quad \forall p\in\mathcal P,\; (h,k)\in\mathcal E,\;d\in\mathcal D,\; g\in\mathcal G^{LD} \label{NF-3}\\
    & x_{hk}^{dg} \leq \sum_{p\in\mathcal P}f_{hkp}^{dg},\quad \forall (h,k)\in\mathcal E,\; d\in\mathcal D,\; g\in\mathcal G^{LD} \label{NF-4} \\
    & pw_{hk}^{dg} = \sum_{p\in\mathcal P} P_p \cdot f_{hkp}^{dg}, \quad \forall (h,k)\in\mathcal E,\; d\in\mathcal D,\; g\in\mathcal G^{LD} \label{NF-5}\\
    & pw_{hk}^{dg} \leq P_{max}^{LD}, \quad \forall (h,k)\in\mathcal E,\; d\in\mathcal D,\; g\in\mathcal G^{LD} \label{NF-6} 
\end{align}

Constraints \eqref{NF-1} compute the outgoing flow of the packages from the CD for each LD-trip $dg$. Constraints \eqref{NF-2} ensure the flow balance of the packages at each LH. Furthermore, constraints \eqref{NF-3} - \eqref{NF-4} ensure that if a package is carried over arc $(h, k)$ by LD-trip $dg$, this arc must be traversed by LD-trip $dg$. Constraints \eqref{NF-5} compute the package weight each LD-trip $dg$ carries on each arc $(h,k)$, whereas constraints \eqref{NF-6} ensure that the PWCC of the LDs is satisfied.

\vspace{-5pt}
\subsubsection{Commodity-Flow Reformulation for Payload Constraints in Echelon 2}

Introducing the network flow reformulation (described in Section~\ref{subsubsec:NF-Reformulation}) for the MDs in echelon 2 results in a significantly large number of decision variables. Therefore, we introduce a new reformulation technique for the MDs in echelon 2, where we reformulate big-M-based constraints \eqref{E2-Const40} - \eqref{E2-Const43} to big-M-free equations \eqref{PL-1} - \eqref{PL-4}. 

\begin{align}
    & \sum_{i\in\mathcal I}pw_{hi}^{vg'} = \sum_{j\in\mathcal I}P_j \cdot y_{jh}^{vg'}, \quad \forall h\in\mathcal H,\, v\in\mathcal V_h,\, g'\in\mathcal G^{MD} \label{PL-1}\\
    & \sum_{l\in\mathcal L}pw_{hl}^{vg'} = \sum_{o\in\mathcal L} \sum_{m\in\mathcal M}P_m \cdot \gamma_{mo} \cdot y_{oh}^{vg'}, \quad \forall h\in\mathcal H,\, v\in\mathcal V_h,\, g'\in\mathcal G^{MD} \label{PL-2}\\
    & \sum_{j': (j',i)\in\mathcal A_h} pw_{j'i}^{vg'} - \sum_{j: (i,j)\in\mathcal A_h} pw_{ij}^{vg'} = P_i \cdot y_{ih}^{vg'}, \quad \forall h\in\mathcal H,\, i\in\mathcal I,\, v\in\mathcal V_h,\, g'\in\mathcal G^{MD} \label{PL-3}\\
    & \sum_{o': (o',l)\in\mathcal B_h} pw_{o'l}^{vg'}  - \sum_{o: (l,o)\in\mathcal A_h} pw_{lo}^{vg'} = \sum_{m\in\mathcal M} P_m \cdot \gamma_{ml} \cdot y_{lh}^{vg'}, \quad \forall h\in\mathcal H,\, l\in\mathcal L,\, v\in\mathcal V_h,\, g'\in\mathcal G^{MD} \label{PL-4}
\end{align}

\vspace{-5pt}Constraints \eqref{PL-1} - \eqref{PL-2} compute the package weight each MD carries while departing an LH. Moreover, constraints \eqref{PL-3} - \eqref{PL-4} compute the package weight of the MDs along different arcs while serving the CLs and the AMBs, respectively.

\vspace{-5pt}
\subsection{Valid Inequalities}\label{subsec:valid-inequality}

We introduce several problem-specific valid inequalities to tighten the feasible region.

\vspace{-5pt}
\subsubsection{Symmetry-Breaking Constraints}

In our \textit{HDDNM} problem, the following two types of symmetry exist, which negatively affect the computational performance of the \textit{BaM} (i.e., Eqs. \eqref{eq:obj} - \eqref{E2-Const65}): ($i$) there are multiple drones of the same type, and ($ii$) each drone has multiple available identical trips to perform. Therefore, we introduce symmetry-breaking constraints \eqref{SB-1} - \eqref{SB-4}, which are valid inequalities that improve the computational efficiency of the \textit{BaM} (i.e., Eqs. \eqref{eq:obj} - \eqref{E2-Const65}), as stated in Proposition \ref{prop:Symmetry-Breaking}.
\vspace{-8pt}
\begin{align}
    & u^d \leq u^{d-1}, \quad \forall d\in\mathcal D,\, d > 1 \label{SB-1} \\
    & u^{dg} \leq u^{d, g-1}, \quad \forall d\in\mathcal D,\, g\in\mathcal G^{LD},\, g>1 \label{SB-2} \\
    & \mu_h^v \leq \mu_h^{v-1}, \quad \forall h\in\mathcal H,\, v\in\mathcal V_h, \, v>1 \label{SB-3} \\
    & \mu_h^{vg'} \leq \mu_h^{v, g'-1}, \quad \forall h\in\mathcal H,\, v\in\mathcal V_h,\, g'\in\mathcal G^{MD},\, g' > 1 \label{SB-4}
\end{align}

\vspace{-15pt}\begin{prop}\label{prop:Symmetry-Breaking}
    The \textit{BaM} model augmented with symmetry-breaking constraints \eqref{SB-1} - \eqref{SB-4} remains as a valid model, and its feasible set is a proper subset of that of \textit{BaM}.
\end{prop}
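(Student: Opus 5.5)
The argument has two parts. First, \emph{validity}: for every feasible solution of \textit{BaM} there is a feasible solution of the augmented model with the same objective value, so the optimal value is unchanged. Second, \emph{proper subset}: the added constraints are not implied by \textit{BaM}. For validity, I will start from an arbitrary feasible (in particular, an optimal) solution of \textit{BaM} and apply index permutations that do not change the objective.

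\textbf{Trip permutations.} Fix an LD $d$ and let $g_1<g_2<\dots<g_K$ be the trips it performs, i.e., those with $u^{dg}=1$. Because $u^{dg}=1$ forces $dt_0^{dg}\ge rt_0^{dg''}+T_{bat}^{LD}$ for every $g''<g$ via \eqref{eq:E1-Const12}, the performed trips are already chronologically ordered. The issue is unused trips placed between them. The conditional constraints are inactive for an unused trip, since all its $x,w,r,pw,e$ are zero. Its time variables, however, still enter \eqref{eq:E1-Const12}.

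I therefore relabel the performed trips as $1,\dots,K$, keeping their order, and move the unused trips to indices $K+1,\dots,|\mathcal G^{LD}|$. For each unused trip, I set $dt_0^{dg}$ and $rt_0^{dg}$ to a common large value (no greater than $M_1$-scale bounds) that exceeds all previous return times. This keeps \eqref{eq:E1-Const12} satisfied in both directions: the used trips have earlier indices, so no constraint forces them to start after the unused trips. Constraints \eqref{eq:E1-Const8}–\eqref{eq:E1-Const10} are vacuous for unused trips thanks to the big-M terms. All other constraints, including the interconnection constraints \eqref{eq:Int-Const1}–\eqref{eq:Int-Const2}, depend only on sums over $g$ or on per-trip data that moves with its index. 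Hence the objective, which sums $u^{dg}$ and $e^{dg}_{hk}$ over all $g$, is unchanged. The same argument applied to MD trips with \eqref{E2-Const36}, \eqref{E2-Const34}–\eqref{E2-Const35} and \eqref{E2-Const28}–\eqref{E2-Const33} yields \eqref{SB-4}.

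\textbf{Drone permutations.} The LDs in $\mathcal D$ are identical: same parameters, and each drone appears only through its own variable block. Any permutation $\sigma$ of $\mathcal D$ applied to all $d$-indexed variables therefore maps feasible solutions to feasible solutions with equal cost. I will choose $\sigma$ so that the used drones ($u^d=1$) receive the smallest indices, which gives \eqref{SB-1}. Combining this with the trip relabelling gives \eqref{SB-2}, since $u^{dg}\le u^d$ already holds by \eqref{eq:E1-Const5}. The same reasoning applies to MDs within each $\mathcal V_h$, separately for each hub, using the fact that MDs are homogeneous at each LH; this gives \eqref{SB-3}.

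\textbf{Proper subset.} I will take any feasible solution of \textit{BaM} that uses a single LD (for instance, one whose packages fit in one trip), and move all its variables to drone index $d=2$ and trip $g=2$. This solution remains \textit{BaM}-feasible, provided $|\mathcal D|,|\mathcal G^{LD}|\ge2$. It violates \eqref{SB-1}, since $u^2=1>u^1=0$. Hence the feasible set shrinks strictly.

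The main obstacle I expect is the careful handling of unused trips in the timing constraints \eqref{eq:E1-Const12} and \eqref{E2-Const36}. These constraints are not conditioned on $u^{dg}$, so the time variables of dummy trips must be chosen consistently. The chosen values also need to respect any upper bounds implicitly imposed through $M_1$, so I will check that a value of order $\max_p L_p$ plus the trip durations is admissible.
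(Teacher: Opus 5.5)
Your argument is correct in substance and more complete than the paper's, although it rests on the same basic idea: relabelling identical drones and trips changes neither feasibility nor cost.

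\textbf{Comparison with the paper.} The paper treats only \eqref{SB-1}, by a pairwise exchange. Two optimal solutions that differ only in which of two identical LDs is used have equal cost, and \eqref{SB-1} deletes the copy using the higher index, so $F'\subseteq F$ with the optimal value unchanged. It declares \eqref{SB-2}--\eqref{SB-4} analogous and does not argue properness beyond that. You instead map every feasible solution to a canonical representative: a permutation of $\mathcal D$ and of each $\mathcal V_h$, plus an order-preserving compaction of the performed trips. This covers arbitrary sets of used drones rather than a single swap. More importantly, you correctly observe that trip indices are not a genuine symmetry of \textit{BaM}, because \eqref{eq:E1-Const12} and \eqref{E2-Const36} are unconditioned and order-dependent. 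The paper's ``similar procedure'' for \eqref{SB-2} and \eqref{SB-4} therefore really needs your compaction plus a re-timing of the idle trips. You also give an explicit properness witness, which the paper lacks.

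\textbf{Details to tighten.}

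\emph{Idle-trip times are capped.} For an idle trip, \eqref{eq:E1-Const8} with $x_{0h}^{dg}=0$ and the left inequality of \eqref{eq:Int-Const2} with $w_{ph}^{dg}=0$ give $dt_0^{dg}+T_L^{LD}+T_U^{LD}+T_{0h}^{LD}-M_1\le at_h^{dg}\le \pi_p+M_1$. Idle MD trips have analogous caps from \eqref{E2-Const28}--\eqref{E2-Const29} together with \eqref{E2-Const37}. If you set $rt_0^{dg}=dt_0^{dg}$ on idle trips, \eqref{eq:E1-Const12} forces their departures to grow by $T_{bat}^{LD}$ per idle trip, which eats into this cap. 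Instead, give $at_h^{dg}$ and $rt_0^{dg}$ of idle trips the smallest values the constraints allow. Then all idle trips can share one departure time $D^*=\max_{g\le K}rt_0^{dg}+T_{bat}^{LD}$.

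\emph{Reset the performed trips first.} Before doing the above, move the performed trips to their earliest schedule. For fixed binaries, every timing constraint of \textit{BaM} is either of the form $t_a-t_b\ge c$ or a simple bound. The feasible schedules are therefore closed under componentwise minima, a least one exists, and the objective is unaffected. This matters because a performed trip that originally carried the largest index has a return time with no upper bound. After compaction it would otherwise push $D^*$ past the cap. What remains is the big-M adequacy condition $D^*+T_L^{LD}+T_U^{LD}+T_{0h}^{LD}\le 2M_1+\min_p\pi_p$ for all $h\in\mathcal H$, which is what your last sentence should verify.

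\emph{Properness witness.} Relabel only the drone. Moving the trip to $g=2$ is unnecessary and can break feasibility: with trip $1$ idle, \eqref{eq:E1-Const12} forces $dt_0^{d2}\ge rt_0^{d1}+T_{bat}^{LD}\ge T_{bat}^{LD}$, which may violate a tight due time if the original trip left earlier.

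Also state the hypothesis explicitly, since properness is instance-dependent. It fails for infeasible instances, and when $|\mathcal D|=|\mathcal G^{LD}|=|\mathcal V_h|=|\mathcal G^{MD}|=1$, because the constraints are then vacuous. Assume instead that \textit{BaM} has a feasible solution whose set $U$ of used LDs satisfies $\emptyset\neq U\subsetneq\mathcal D$. A permutation $\sigma$ of $\mathcal D$ with $1\notin\sigma(U)$ then yields a \textit{BaM}-feasible point violating \eqref{SB-1}. Your ``solution that uses a single LD'' silently assumes such a point exists.
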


\vspace{-15pt}
\subsubsection{Echelon 1 Delivery Time}

Once a package is delivered to an LH in echelon 1, it needs to be transported to its corresponding final delivery destination in echelon 2. Although constraints~\eqref{E2-Const37} enforce the delivery due times of the packages at their corresponding delivery destinations in echelon 2, they do not directly restrict the delivery time of packages to the LHs in echelon 1. Accordingly, the delivery time of each package in echelon 1 must account for the minimum time required for the package to be delivered by a drone in echelon 2 to its final destination. To model this condition, we introduce valid inequalities \eqref{VI-1} and \eqref{VI-2} for the CLs and the AMBs, respectively.

\begin{align}
    & \pi_i \leq L_i - \beta_{ih} \cdot \min_{a\in \{\mathcal T_h \cup MD\}} \Big(T_L^{a} + T_U^{a} + T_{hi}^{a} \Big), \quad \forall i\in\mathcal I, \; h\in\mathcal H \label{VI-1} \\
    & \pi_m \leq L_m - \gamma_{ml} \cdot \beta_{mh} \cdot \min_{a\in \{\mathcal T_h \cup MD\}} \Big(T_L^{a} + T_U^{a} + T_{hl}^{a} \Big), \quad \forall m\in\mathcal M,\, l\in\mathcal L,\, h\in\mathcal H \label{VI-2}
\end{align}

\vspace{-20pt}
\subsection{Dynamic Cutting Planes}\label{subsec:separation}

In this section, we introduce a set of dynamic cutting plane generation procedures. These dynamic cuts eliminate fractional solutions that satisfy the relaxed constraints of the \textit{BaM} (i.e., Eqs. \eqref{eq:obj} - \eqref{E2-Const65}) but violate the integral feasibility. We dynamically detect and remove such \textit{LP-feasible, integer-infeasible} solutions using separation procedures as the algorithm proceeds through the branch-and-bound (B\&B) tree. These procedures gradually tighten the underlying LP-relaxation without explicitly enumerating all valid inequalities.

\vspace{-5pt}
\subsubsection{Capacity Cuts}\label{subsubsec:Cap_Cut}

In an LP-relaxed solution, binary variables may take fractional values (e.g., $0 \leq f_{hkp}^{dg} \leq 1$). Therefore, the computed payload, $pw_{hk}^{dg}=\sum_{p\in\mathcal P}P_p \cdot f_{hkp}^{dg}$, in an LP-relaxed solution may underestimate the actual weight of the packages that the fractional solution assigns to LD-trip $dg$. This can result in LP-feasible solutions, in which the assignment of the packages to the LD-trips violates the PWCC of the LDs. Let $C^{dg}$ be a subset of packages assigned to LD-trip $dg$ in a fractional solution, such that $\sum_{p\in C^{dg}} P_p > P_{max}^{LD}$. This subset of packages cannot be carried simultaneously by a single LD-trip. Therefore, to eliminate fractional solutions that assign all the packages in $C^{dg}$ to the same LD-trip, we introduce constraints~\eqref{Cap_Cut}.

\begin{equation} \label{Cap_Cut}
    \sum_{h\in\mathcal H} \sum_{p\in C^{dg}} w_{ph}^{dg} \leq u^{dg} \Big(|C^{dg}| - 1 \Big), \quad \forall d\in\mathcal D,\, g\in\mathcal G^{LD},\, C^{dg} \in \tilde{\mathcal C}
\end{equation}

\vspace{-5pt}Here, $\tilde{\mathcal C}$ is the set of all subsets (i.e., $C^{dg}$) of packages that violate the PWCC of the LDs. However, a full enumeration of $\tilde{\mathcal C}$ is computationally intractable. Therefore, we perform a separation procedure on the fractional solutions to dynamically update $\tilde{\mathcal C}$, and add constraints~\eqref{Cap_Cut} to the \textit{BaM} on an as-needed basis.
The exact separation procedure requires solving model~\eqref{Exact_Cap_Cut_Separation}, presented in Appendix~\ref{Appendix_Separation}, to find the maximally violated subset. However, finding the maximally violated subset by solving model~\eqref{Exact_Cap_Cut_Separation} is an NP-hard problem~\citep{kaparis2010separation}. Therefore, we develop a fast heuristic separation procedure to identify the violated subsets, as shown in Algorithm~\ref{alg:Capacity_Cut} in Appendix~\ref{Appendix_Algorithms}. Here, for each LD-trip $dg$, we first determine whether LD $d$ performs its trip $g$. We then identify the set of packages, $C^{dg}$, that have non-zero assignments to that LD-trip. If this set of packages violates the capacity cut~\eqref{Cap_Cut}, we add the corresponding cut to the \textit{BaM}.

\vspace{-5pt}
\subsubsection{Last Hub Cuts}

As discussed in Section \ref{sec:Problem_Description}, each LD-trip visits a sequence of LHs and delivers multiple packages to each visited LH. This operational characteristic specifies that the relationship between variables $x_{hk}^{dg}$, $f_{hkp}^{dg}$, and $w_{ph}^{dg}$ must follow special conditions in an integer-feasible solution, as stated in Proposition~\ref{prop:Last_Hub}. 

\begin{prop}\label{prop:Last_Hub}
    Let $\tilde{K}_h^{dg}:= \{p\in\mathcal P: \sum_{k\in\mathcal H \cup \{0\}} f_{khp}^{dg}=1\}$ and $\tilde{D}_h^{dg}:= \{p\in \tilde{K}_h^{dg}: w_{ph}^{dg}=1\}$ denote the set of packages carried and delivered to LH $h$, respectively, by LD-trip $dg$. Then, in every integer-feasible solution of the \textit{BaM}, the following two conditions hold: $(i)$ $\sum_{k\in\mathcal H} x_{hk}^{dg}=1 \Rightarrow \tilde{D}_h^{dg} \subset \tilde{K}_h^{dg}$; and $(ii)$ $\tilde{D}_h^{dg} = \tilde{K}_h^{dg} \Rightarrow \sum_{k\in\mathcal H} x_{hk}^{dg}=0,\, x_{hS}^{dg}=1,\, \sum_{k\in\mathcal H} \sum_{p\in\mathcal P} f_{hkp}^{dg}=0$.
\end{prop}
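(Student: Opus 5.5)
The plan is to work directly from the integer structure of the echelon-1 routing and commodity-flow constraints, namely \eqref{eq:E1-Const1}--\eqref{eq:E1-Const2}, \eqref{NF-1}--\eqref{NF-4}, and the assignment constraints \eqref{eq:E1-Const3}, \eqref{eq:E1-Const6}. Fix an integer-feasible solution, an LD-trip $dg$ with $u^{dg}=1$, and an LH $h$. First I will record the basic facts. Integrality together with \eqref{eq:E1-Const2} says that if $r_h^{dg}=1$, then exactly one arc enters $h$ (from $\mathcal H\cup\{0\}$) and exactly one arc leaves $h$ (to $\mathcal H\cup\{S\}$), so $\sum_{k\in\mathcal H}x_{hk}^{dg}+x_{hS}^{dg}=1$. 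Constraints \eqref{NF-3} force any package flow out of or into $h$ to travel on the used arcs. By \eqref{NF-2} with binary $f$, the inflow $\sum_{k} f_{khp}^{dg}$ equals $w_{ph}^{dg}$ plus the outflow. Hence $\tilde D_h^{dg}\subseteq \tilde K_h^{dg}$ always holds, and $\tilde K_h^{dg}\setminus\tilde D_h^{dg}$ is exactly the set of packages $p$ with $\sum_k f_{hkp}^{dg}=1$. Here I am using \eqref{eq:E1-Const6} and \eqref{NF-1} to argue that each package enters the trip at most once, so inflows and outflows are binary and no package is dropped at two hubs.

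For part $(i)$, suppose $\sum_{k\in\mathcal H}x_{hk}^{dg}=1$, so the trip leaves $h$ for some LH $k$. By \eqref{NF-4}, at least one package $p$ has $f_{hkp}^{dg}=1$. That package then lies in $\tilde K_h^{dg}$, since the balance at $h$ forces inflow $\ge$ outflow $=1$, but it does not lie in $\tilde D_h^{dg}$, since $w_{ph}^{dg}=1$ would make the inflow equal to $2$, contradicting the binary inflow. This gives the strict inclusion $\tilde D_h^{dg}\subset\tilde K_h^{dg}$.

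For part $(ii)$, suppose $\tilde D_h^{dg}=\tilde K_h^{dg}$. By the identity above, no package leaves $h$, so $\sum_{k\in\mathcal H}\sum_p f_{hkp}^{dg}=0$. By \eqref{NF-4}, every arc $(h,k)$ with $k\in\mathcal H$ then has $x_{hk}^{dg}=0$. Using the contrapositive of $(i)$ gives the same conclusion. The degree equation \eqref{eq:E1-Const2} then forces $x_{hS}^{dg}=r_h^{dg}$, so $x_{hS}^{dg}=1$ provided $h$ is visited.

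I expect the main obstacle to be the degenerate case: $h$ not visited, or visited with no deliveries. If $h$ is not visited, then $\tilde K_h^{dg}=\tilde D_h^{dg}=\emptyset$ trivially, yet $x_{hS}^{dg}=0$. I would handle this by reading the proposition as concerning hubs on the route of $dg$ ($r_h^{dg}=1$), which is the setting in which the cuts are applied, and I would state that assumption explicitly. Separately, I would note that in an optimal solution, or after a reduction, every visited hub receives at least one package. If it received none, one could argue it would simply be shortcut, but this is not needed for the stated implications.
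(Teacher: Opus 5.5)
Your proof is correct and follows essentially the same route as the paper's: flow conservation \eqref{NF-2} separates delivered packages from forwarded ones, the linking constraints \eqref{NF-3}--\eqref{NF-4} tie package flow out of $h$ to the arcs $x_{hk}^{dg}$, and the degree constraints \eqref{eq:E1-Const2} force $x_{hS}^{dg}=1$. Your explicit assumption $r_h^{dg}=1$ is a needed refinement: for an unvisited hub, $\tilde{K}_h^{dg}=\tilde{D}_h^{dg}=\emptyset$ while $x_{hS}^{dg}=0$, and the paper's proof of $(ii)$ passes over this case silently when it asserts that the only feasible outbound arc from $h$ is the one to $S$.
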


However, in an LP-relaxed solution, packages can be split fractionally across the LHs and the LD-trips. Therefore, the conditions presented in Proposition~\ref{prop:Last_Hub} may not necessarily hold in LP-relaxed solutions. To eliminate such \textit{LP-feasible, integer-infeasible} solutions, we introduce constraints~\eqref{eq:Last_Hub_Cut} for each $K_h^{dg}= \{p\in\mathcal P: \sum_{k\in\mathcal H \cup \{0\}} f_{khp}^{dg} > \epsilon\}$ and $D_h^{dg}= \{p\in K_h^{dg}: w_{ph}^{dg} > \epsilon\}$. Dynamically adding constraints~\eqref{eq:Last_Hub_Cut} to the \textit{BaM} requires a separation procedure. As the exact separation is computationally challenging, we propose a heuristic separation procedure, as shown in Algorithm~\ref{alg:LastHubCut} in Appendix~\ref{Appendix_Algorithms}. Here, for each LD-trip $dg$ and LH $h$, we first determine whether LH $h$ is visited by LD-trip $dg$. We then identify the subsets $K_h^{dg}$ and $D_h^{dg}$, and evaluate the two conditions presented in Proposition~\ref{prop:Last_Hub}. If these conditions are violated and the identified subsets violate the cut~\eqref{eq:Last_Hub_Cut} in the current fractional solution, we add the corresponding cut to the \textit{BaM}.

\begin{equation}\label{eq:Last_Hub_Cut}
    \sum_{k : (k,h) \in \mathcal A} \sum_{p \in K_h^{dg}} f_{khp}^{dg} - \sum_{p \in D_h^{dg}} w_{ph}^{dg} \geq \sum_{k \in \mathcal H} x_{hk}^{dg}, \quad \forall d\in\mathcal D,\, g\in\mathcal G^{LD},\, h\in\mathcal H
\end{equation}

\vspace{-15pt}
\subsection{A Fast Heuristic Solution Algorithm}\label{subsec:Heuristic}

In this section, we propose a fast heuristic algorithm to efficiently solve real-life large-scale instances of the \textit{HDDNM} problem while providing good-quality solutions. To that end, we first define a simplified variant of the \textit{HDDNM} problem as follows. We consider the hierarchical H\&S network described in Section~\ref{sec:Problem_Description} with a single commodity and identical packages, where all delivery destinations are fixed. We specify the demand at each CL as the required quantity (measured in weight) of the commodity that needs to be satisfied before the respective due time. The full supply is released at the CD at the beginning of the planning horizon, so that all LDs depart the CD simultaneously at a given time during the planning horizon. Only MDs operate in echelon 2, and both LDs and MDs perform single-trip circular deliveries, eliminating the need to consider battery swapping decisions. Moreover, we discretize the amounts of commodity carried by the LDs over fixed intervals. We also exclude the energy consumption costs from the objective function, effectively simplifying the problem's goal to minimizing the fleet size. Additionally, we assume that the departure of the MDs from each LH occurs after all LD deliveries to that LH are completed.

To efficiently solve this simplified problem variant, we propose a heuristic algorithm, as shown in Algorithm~\ref{alg:Heuristic}. Here, $\hat{P}_i$ is the discretized demand quantity (i.e., package weight) of CL $i$, and $T_E$ is the earliest time that all the LDs depart the CD simultaneously. In Step 1, we decouple the two echelons by solving the set partitioning model~\eqref{Heuristic_Set_Partition}. The solution of model~\eqref{Heuristic_Set_Partition} determines the assignment of the CLs to the LHs, $\beta_{ih}$. In step 2, we solve the echelon 2 problem (i.e., routing and scheduling MDs) using the greedy Algorithm~\ref{alg:Greedy} in Appendix~\ref{Appendix_Algorithms}. Then in Step 3, we determine the latest possible departure time of the MDs from the LHs, while satisfying the delivery due times of all the CLs (Algorithm~\ref{alg:Late_Departure} in Appendix~\ref{Appendix_Algorithms}).
In Step 4, for each LH $h$, we then determine ($i$) the required quantity for satisfying the demand of assigned CLs, $s_h$; and ($ii$) the due (i.e., latest) time that the LD operations in echelon 1 must be completed for that LH, $\pi_h$.
Then in Step 5, we determine the routing and scheduling of the LDs in echelon 1 by solving the set covering model~\eqref{Set_Covering}. Here, we refer to the set $\mathcal Q$ as all the \enquote{pattern-paths} for the LDs in echelon 1, where each \enquote{pattern-path} $q\in\mathcal Q$ represents the sequence of visiting the LHs, as well as the amount of commodity (in discrete values) delivered to each LH $h$, $W_h^q$. For each pattern-path, we ensure that the battery capacity and the PWCC of the LDs are satisfied. Moreover, we ensure that each pattern-path $q$ satisfies the due time of the LHs, i.e., $AT_h^q \leq \pi_h$, where $AT_h^q$ is the time that pattern-path $q$ visits LH $h$.

\begin{algorithm}[H]
\caption{Fast heuristic algorithm for the simplified problem variant.}
\label{alg:Heuristic}
\begin{algorithmic}[1]
\Require $\Delta = \{T_E, T_L^{LD/MD}, T_U^{LD/MD},  T_{hk}^{LD}, T_{ij}^{MD}, L_i, \hat{P}_i, B_{min/max}^{LD/MD}, P_{max}^{LD/MD}, P_N^{MD}\}$
\Ensure $\mathcal R^{MD}$ and $\mathcal R^{LD}$
\Procedure{SimplifiedHeuristic}{$\Delta$}
    \State{\textbf{Step 1:}}
    \State $\mathcal H_i \gets \{ h\in\mathcal H: T_E + T_L^{LD} + T_{0h}^{LD} + T_U^{LD} + T_L^{MD} + T_{hi}^{MD} + T_U^{MD} \leq L_i\}, \; \forall i\in\mathcal I$
    \State $\beta_{ih} \gets$ Solve model~\eqref{Heuristic_Set_Partition}\vspace{-5pt}
        \begin{subequations}
        \begin{align}
        \min & \sum_{i\in\mathcal I} \sum_{h\in\mathcal H_i}  \Big(T_{hi}^{MD} \cdot \beta_{ih}\Big) \\
        \text{s.t.} & \sum_{h\in\mathcal H_i} \beta_{ih} = 1, \forall i\in\mathcal I \\
        & \beta_{ih} \in \{0, 1\}
        \end{align}\label{Heuristic_Set_Partition}
        \end{subequations}\vspace{-15pt}
    \State{\textbf{Step 2:}}
    \For{$h\in\mathcal H$}
        \State $ActiveTrips_h \gets \Call{TripAssignment}{h, \beta_{ih}}$
    \EndFor
    \State $\mathcal R^{MD} \gets \cup_{h\in\mathcal H} ActiveTrips_h$: Routing and scheduling of all the MDs from all the LHs
    \State{\textbf{Step 3:}}
    \For{$h\in\mathcal H$ and $T_h \in ActiveTrips_h$}
        \State $\text{Dep}(T_h) \gets \Call{LatestDeparture}{T_h}$
    \EndFor
    \State{\textbf{Step 4:}}
    \For{$h \in \mathcal H$}
        \State $\pi_h \gets \min_{ActiveTrips_h} \{\text{Dep}(T_h)\}$, and $s_h \gets \sum_{i \in \mathcal I} \hat{P}_i : \beta_{ih} = 1$ 
    \EndFor
    \State{\textbf{Step 5:}}
    \State $\mathcal Q \gets$ Enumerate all the pattern-paths
    \State $\mathcal R^{LD} \gets$ Solve model~\eqref{Set_Covering}\vspace{-5pt}
        \begin{subequations}
        \begin{align}
            \min & \sum_{q\in\mathcal Q} x_q \\
            \text{s.t.} & \sum_{q\in\mathcal Q} W_h^{q} \cdot x_q \geq s_h, \quad \forall h\in\mathcal H \label{SC-Const}\\
            & x_q \in \mathbb{Z}^+
        \end{align}\label{Set_Covering}
        \end{subequations}\vspace{-15pt}
\EndProcedure
\end{algorithmic}
\end{algorithm}

\begin{prop}\label{prop:Conservative}
    Denoting $PU$ be the package weight unit in discretized demand, any solution obtained by Algorithm~\ref{alg:Heuristic} is a feasible solution to the \textit{HDDNM} problem restricted to the fixed delivery destinations (i.e., CLs) if the following conditions hold: $(i)$ The release times of all the packages are earlier than the departure time of the LDs in the simplified problem variant, i.e., $T_E \geq \max_{i\in\mathcal I} R_i$. $(ii)$ The discretized package weight quantities of the CLs in the simplified problem variant overestimate the actual package weights, i.e., $\hat{P}_i = PU \cdot \lceil \frac{P_i}{PU} \rceil$.
\end{prop}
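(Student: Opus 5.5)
The approach is constructive. I take any output $(\mathcal R^{MD},\mathcal R^{LD})$ of Algorithm~\ref{alg:Heuristic} and map it to an explicit assignment of the \textit{BaM} variables, restricted to $\mathcal M=\emptyset$. For that restriction I set $\alpha_l=\gamma_{ml}=0$, $\mathcal T_h$-flows $\theta=0$, $r^t=s^t=0$ (only MDs are used), each LD used for a single trip $g=1$, and each MD used for a single trip $g'=1$. I then check the constraint blocks in order: echelon~2 routing and capacity, echelon~2 timing, the interconnection, echelon~1 routing and capacity, and echelon~1 timing including release times. Battery-swapping and multi-trip ordering constraints (\eqref{eq:E1-Const12}, \eqref{E2-Const36}) hold vacuously because only one trip per drone is active.

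\textbf{Echelon 2.} Step~1 fixes $\beta_{ih}$ with $\sum_h\beta_{ih}=1$, and Step~2 (Algorithm~\ref{alg:Greedy}) produces MD trips from each $h$ serving exactly the CLs with $\beta_{ih}=1$. I set $z,y,\mu$ from these routes, so \eqref{E2-Const5}--\eqref{E2-Const10} and \eqref{E2-Const16}--\eqref{E2-Const18} hold. Capacity \eqref{E2-Const21}, the energy constraints \eqref{E2-Const44}, \eqref{E2-Const46}, and the count limit \eqref{E2-Const23} are enforced by the greedy routine with respect to $\hat P_i$. Condition~$(ii)$ gives $P_i\le\hat P_i$, and the energy $E^0 z+E^1 pw$ is monotone in payload, so the true payloads $pw$ defined by \eqref{PL-1}, \eqref{PL-3} satisfy all these bounds as well. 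For timing, I set $pt_h^{v1}=\text{Dep}(T_h)$ from Step~3 and define $\delta_i$ by propagating along the route. The LatestDeparture routine guarantees $\delta_i\le L_i$, which gives \eqref{E2-Const37}, and the return time follows from \eqref{eq:E2-Const32-New}.

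\textbf{Echelon 1 and the link.} The set covering solution $x_q$ gives, for each pattern-path $q$ with multiplicity $x_q$, an LD route, so $u^d=u^{d1}=1$ and $x_{hk}^{d1}$ follow the path. The key step is to turn the covering of aggregate quantities $\sum_q W_h^q x_q\ge s_h$ into an integral package-level assignment $w_{ph}^{d1}$ in which each package $i$ with $\beta_{ih}=1$ is carried whole by one LD visiting $h$. This is the main obstacle: a covering of weight does not automatically yield an assignment of indivisible packages. I would handle it through discretization. Since each $\hat P_i$ is a multiple of $PU$ and $W_h^q$ is also in units of $PU$, the demand at $h$ is a collection of $PU$-unit blocks, each package occupying $\hat P_i/PU$ units. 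I would then argue, or if necessary adopt the reading implicit in the heuristic, that pattern-paths allocate whole package blocks to each hub (the discretized demands in the simplified variant are the package quantities themselves). Assigning packages greedily to the LD slots with $W_h^q$ capacity at $h$ then covers all packages, and surplus capacity is simply unused. The payload on each arc then satisfies $pw\le\sum_h W_h^q\le P^{LD}_{max}$ and respects the battery bound \eqref{eq:E1-Const19} by the pattern-path construction and monotonicity in weight. The flows $f_{hkp}$ are defined along the path, so \eqref{NF-1}--\eqref{NF-6} hold.

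\textbf{Timing across echelons.} I set $dt_0^{d1}=T_E$, and condition~$(i)$ yields \eqref{eq:E1-Const11}. Arrival times $at_h^{d1}=AT_h^q$ satisfy \eqref{eq:E1-Const8}--\eqref{eq:E1-Const9}, and I set $\pi_p=at_h^{d1}$ for the carrying LD, which gives \eqref{eq:Int-Const2}. Finally, $\pi_i\le AT_h^q\le\pi_h=\min\text{Dep}(T_h)\le pt_h^{v1}$ yields \eqref{E2-Const34}. This is exactly the assumption that MDs depart only after LD deliveries. Big-$M$ constraints with inactive binaries hold trivially because $M_1>\max L_p$. Together these give feasibility, and any objective terms, such as energy, are simply evaluated and play no role in feasibility.
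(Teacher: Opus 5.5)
Your route is the same as the paper's. You map the output of Algorithm~\ref{alg:Heuristic} to \textit{BaM} variables and verify four things: release times via condition $(i)$, MD-after-LD via $AT_h^q\le\pi_h$, due times, and PWCC/battery via $\hat P_i\ge P_i$ plus monotonicity of energy in payload. The genuine gap is the step you flag yourself, and your proposed fix does not close it.

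Turning $\sum_q W_h^q x_q\ge s_h$ into a package-level assignment, with each package riding whole on one LD copy and at most $W_h^q$ units per copy at $h$, is a bin-packing problem with unequal bin sizes. Aggregate coverage does not imply it, and divisibility by $PU$ is irrelevant.

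Concretely, take two hubs, $PU=1$, $P_{max}^{LD}=4$, and one CL at each hub with $P_a=P_b=3$, so $s_{h_1}=s_{h_2}=3$. The pattern-path $0\to h_1\to h_2\to S$ with $(W_{h_1},W_{h_2})=(2,2)$ respects the PWCC. Using it twice covers $(3,3)$ with two LDs, which is optimal for \eqref{Set_Covering} since one LD carries at most $4<6$.
\begin{itemize}
\item No copy has a slot of size $3$ at either hub, so your greedy slot assignment fails.
\item Ignoring the slots does not help. Each LD can carry only one of the two packages, so the LD carrying $b$ has payload $3$ instead of $4$ on $(0,h_1)$ and $3$ instead of $2$ on $(h_1,h_2)$. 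Its energy rises by $E_{h_1h_2}^{1,LD}-E_{0h_1}^{1,LD}$, which violates \eqref{eq:E1-Const19} whenever the pattern is battery-tight and $E_{h_1h_2}^{1,LD}>E_{0h_1}^{1,LD}$.
\end{itemize}
So this is a counterexample to the statement itself on the heuristic's own routes, not only to your technique. Your hedge that pattern-paths ``allocate whole package blocks'' does not repair the greedy step either. Slots that are each sums of whole package weights can still fail to pack, e.g.\ packages $3,3,2,2$ into slots $4,4,2$.

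The paper's proof has the same hole. It simply asserts that ``when mapping the discretized commodity amounts to their respective actual packages'' one gets $W_h^p\ge\sum_i P_i w_{ih}^d$. You at least located the real difficulty, but you should add the needed hypothesis explicitly rather than claim a greedy assignment works. One option is that the returned pattern-paths admit a package-level assignment with $\sum_i P_i x_{ih}^r\le W_h^r$, which is exactly feasibility of \eqref{MILP-LD}. Another is that the covering quantities come from a partition of each hub's packages.

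Under such a hypothesis the rest of your verification goes through, and in one place it is more careful than the paper's. You get $\delta_i\le L_i$ from LatestDeparture together with $\pi_i=AT_h^q\le\pi_h\le pt_h^{v1}$. The paper instead credits Algorithm~\ref{alg:Greedy}, whose TimeWindow check omits $T_E$.

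Two smaller assumptions should also be stated. First, TripAssignment must actually place every CL; Step~1 screens hubs only on time, not on MD payload or energy. Second, \eqref{E2-Const7} must be read as intended. Literally, together with \eqref{E2-Const8}--\eqref{E2-Const9} and $\alpha\equiv 0$, it forces $\mu_h^{vg'}=0$, so \eqref{E2-Const5}--\eqref{E2-Const10} do not hold verbatim for your assignment.
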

\vspace{-5pt}
Proposition~\ref{prop:Conservative} demonstrates that, under certain conditions, the solution obtained by Algorithm~\ref{alg:Heuristic} for the simplified problem variant can be mapped to a feasible solution to the \textit{HDDNM} problem. Therefore, we develop Algorithm~\ref{alg:Full_Heuristic} to provide a good-quality feasible solution to the \textit{HDDNM} problem restricted to fixed delivery destinations. First, under aforementioned conditions ($i$) and ($ii$), we obtain an initial routing and scheduling for the LDs and the MDs, $\tilde{\mathcal R}^{LD}$ and $ \tilde{\mathcal R}^{MD}$, respectively, using Algorithm~\ref{alg:Heuristic}. We then apply two post-processing procedures as follows. (1) For each $r\in\tilde{\mathcal R}^{MD}$, we perform an intra-route 2-OPT procedure to reduce the total energy consumption. During this local neighborhood search, we only retain the route permutations that satisfy the delivery due times and do not delay the latest departure times. (2) We solve model~\eqref{MILP-LD} to assign individual packages to the discretized single-commodity LD pattern-paths, $\tilde{\mathcal R}^{LD}$. In model~\eqref{MILP-LD}, $A_h^r$ represents whether pattern-path $r\in\tilde{\mathcal R}^{LD}$ visits LH $h$. The solution to model~\eqref{MILP-LD} determines the routing and scheduling of the LDs to deliver individual packages from the CD to the LHs, while minimizing the required number of LDs. We then compute the total cost, accounting for the cost of the required fleet size, as well as the total energy consumption costs of the LDs and the MDs.

\begin{algorithm}[H]
\caption{Fast heuristic algorithm for the \textit{HDDNM} problem.}
\label{alg:Full_Heuristic}
\begin{algorithmic}[1]
\Require PU
\Ensure $\textbf{C}$, $\mathcal R^{LD}$, $\mathcal R^{MD}$

\Procedure{Heuristic}{$PU$}
    \State $\hat{P}_i \gets PU \cdot \lceil \frac{P_i}{PU} \rceil$ and $T_E \gets \max_{i\in\mathcal I} R_i$
    \Comment{The two conditions in Proposition~\ref{prop:Conservative}}
    
    \State $\tilde{\mathcal R}^{LD}, \tilde{\mathcal R}^{MD} \gets \Call{SimplifiedHeruistic}{\Delta}$
    \Comment{Initial solution}

    \State $\mathcal R^{MD} = \emptyset$
    \Comment{Improving echelon 2 solution}
    \For{$r\in \tilde{\mathcal R}^{MD}$}
        \State Perform intra-route 2-OPT on $r$, determine the best route, and add it to $\mathcal R^{MD}$
    \EndFor
    \State $\textbf{C}^{MD} = FC^{MD} \cdot |\mathcal R^{MD}| + C^E \cdot \text{Total Energy Consumption of MDs}$

    \State $\mathcal R^{LD} \gets$ Solve model~\eqref{MILP-LD}
    \Comment{Improving echelon 1 solution}\vspace{-7pt}
        \begin{subequations}
        \begin{align}
            \min & \sum_{r\in \tilde{\mathcal R}^{LD}} y^r \\
            \text{s.t.} & \sum_{r\in \tilde{\mathcal R}^{LD}}  x_{ih}^r = \beta_{ih}, \quad \forall i\in\mathcal I,\, h\in\mathcal H \\
                        & x_{ih}^r \leq A_h^r , \quad \forall i\in\mathcal I, \, r\in \tilde{\mathcal R}^{LD},\, h\in\mathcal H \\
                        & \sum_{i\in\mathcal I} P_i \cdot x_{ih}^r \leq W_h^r,\quad \forall r\in \tilde{\mathcal R}^{LD},\, h\in\mathcal H \\
                        & \sum_{i\in\mathcal I} \sum_{h\in\mathcal H} x_{ih}^r \leq M \cdot y^r,\quad \forall r\in \tilde{\mathcal R}^{LD} \\
                        & x_{ih}^r , y^r \in \{0, 1\}
        \end{align}\label{MILP-LD}
        \end{subequations}\vspace{-15pt}
    \State $\textbf{C}^{LD} = FC^{LD} \cdot |\mathcal R^{LD}| + C^E \cdot \text{Total Energy Consumption of LDs}$
    \State \Return $\textbf{C} = \textbf{C}^{LD} + \textbf{C}^{MD}$, $\mathcal R^{LD}$, $\mathcal R^{MD}$

\EndProcedure
\end{algorithmic}
\end{algorithm}

\vspace{-20pt}
\section{Numerical Results and Managerial Insights}
\vspace{-5pt}

In this section, we assess the efficacy of the proposed models and algorithms, as well as compare the performance of different solution algorithms in solving the \textit{HDDNM} problem. Moreover, based on the real-life case study presented in Section~\ref{subsec:case-study}, we conducted a series of numerical experiments to provide practical insights for emergency healthcare logistics system owners (i.e., decision-makers) into the following questions: (1) How do different solution methods, Gurobi, enhancement strategies in our proposed exact method, and the heuristic approach for the simplified problem variant, compare to each other in terms of runtime and solution quality (i.e., optimality gap)? (2) How do the total cost and the required fleet composition vary from a heterogeneous fleet of drones to a homogeneous fleet of MDs in echelon 2? and (3) How much impact do the changes in maximum permissible delay, consolidation delay at the LHs, and the multi-trip delivery mode of LDs and MDs, have on the total cost and the required number of drones from each drone type?

\vspace{-5pt}
\subsection{Case Study}\label{subsec:case-study}

In this section, we present a real-life case study to evaluate the performance of the proposed model and solution methods in solving the \textit{HDDNM} problem, as well as to provide key managerial insights into efficiently using a heterogeneous fleet of drones for delivering time-sensitive products across a regional hierarchical H\&S network. We used a two-hour Spatio-temporal whole blood delivery data from the Interpath Laboratory, Inc.---a healthcare logistics company---located in Pendleton, Oregon, USA, which is a fully integrated, for-profit, clinical, and anatomic pathology medical laboratory. This delivery data contains: ($i$) the central blood bank's geographic location (i.e., latitude and longitude) as the CD, ($ii$) the geographic locations (i.e., latitude and longitude) of five regional hospitals (LHs), ($iii$) the geographic locations of 35 candidate ambulance locations (ALs), and ($iv$) the geographic locations, package weights, and the timing of a package being ready for pickup at the CD, for 100 clinics (CLs) and 23 mobile ambulances (AMBs). The geographic distribution of the CD, LHs, CLs, ALs, and AMBs is shown in Figure~\ref{fig:Case_Study}. Based on our close collaboration with the Interpath Laboratory, Inc., each unit of whole blood weighs approximately 0.453 kilograms (kg) (i.e., 1 lb), and each package contains between 1 and 10 units of whole blood. Therefore, the package weights vary from 0.453 to 4.53 kg (i.e., from 1 to 10 lbs).

\begin{figure}[h!]
    \centering
    \includegraphics[width=0.9\linewidth]{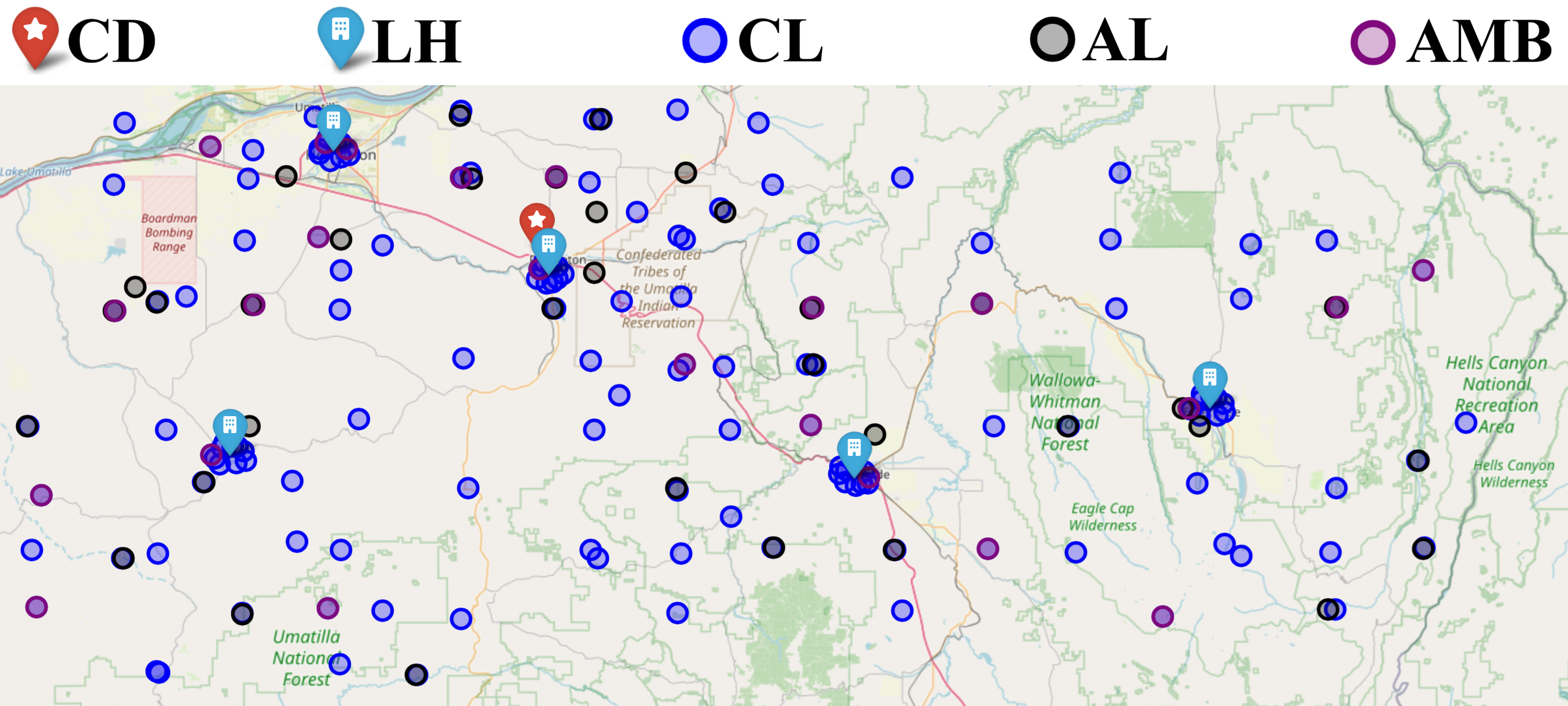}
    \caption{Geographic locations of the CD, LHs, CLs, ALs, and AMBs.\vspace{-10pt}}
    \label{fig:Case_Study}
\end{figure}

We considered a heterogeneous fleet of drones across the two echelons of the hierarchical H\&S network, comprising one LD type, one MD type, and two SD types. We used rotary quadcopter (Tarot 650) and rotary hexacopter (DJI Matrice 600 Pro) as the two SD types, and fixed-wing vertical-takeoff-and-landing (VTOL) (Wingcopter 198) as the MD type in echelon 2. Each Wingcopter 198 can carry three packages simultaneously (i.e., $P_N^{MD}=3$). Furthermore, we used the battery capacity, speed, PWCC, cost, power consumption, loading/unloading time, and battery replacement time for these three drone types from~\citep{hosseini2026branch}, where all drone data in~\citep{hosseini2026branch} are obtained from drone flight tests conducted at the Idaho National Laboratory test site~\citep{bhuiyan2024aerial, bhuiyan2025optimization}. We used Aero-200 long-range drone as the LD type in echelon 1. All drone data for Aero-200 long-range drones, including ownership cost, battery cost, speed, PWCC, battery capacity, and unloaded and fully loaded flight range (i.e., power consumption), are obtained from the Interpath Laboratory, Inc.

In our case study, fixed cost of opening AL $l$ (i.e., $LC_l$) varies between \$2,000 and \$5,000, representing the cost of the necessary equipment (e.g., a medical-grade refrigerator and a calibrated temperature monitoring system) to store blood within specific temperature ranges (e.g., $1^{\circ}\text{C} - 6^{\circ}\text{C}$ for whole blood)~\citep{Aegis2022Blood, Blood2025Temp}. The allocation cost of the AMBs to the ALs represents the fuel and energy consumption cost of ambulances to travel from their origin to the designated delivery location. We consider a city-driving-condition fuel efficiency of 11.26 km (7 miles) per gallon (MPG) for an ambulance~\citep{Fuel2011AMB}. Moreover, the gasoline price and energy conversion factor for each gallon of gasoline are set to \$2.2, and 121.32 MJ (33.7 kWh), respectively~\citep{hosseini2026branch}. Using the aforementioned values, as well as an energy cost of \$0.036/MJ (i.e., \$0.13/kWh)~\citep{hosseini2026branch}, the combined fuel and energy consumption cost of an ambulance traveling from its origin to an AL is \$0.938/mile.

\vspace{-5pt}
\subsection{Experimental Setup}\label{subsec:Experimental_Setup}

We implemented our models and solution methods in Python 3.12.7. We used the Gurobi solver ver. 12.0.1~\citep{gurobi2023} to solve the \textit{BaM} (i.e., Eqs~\eqref{eq:obj} - \eqref{E2-Const65}), model~\eqref{Heuristic_Set_Partition}, model~\eqref{Set_Covering}, and model~\eqref{MILP-LD}. We set the optimality gap and the time limit to 1\% and 1 hour, respectively, in all numerical experiments. We ran the numerical experiments on the compute nodes in the University of Texas at San Antonio High Performance Computing (HPC) Cluster, ARC, using a 64-bit Linux operating system. A compute node has two Intel Xeon Gold 6248 CPUs, each with @2.5 GHz and 40 physical cores with 376 GB of installed RAM. In the \textit{BaM} (i.e., Eqs.~\eqref{eq:obj}--\eqref{E2-Const65}), we used $M_1 = 2 \times \max_{p\in\mathcal P} L_p = 43080$, $M_2 = 2 \times P_{max}^{LD} = 88$, and $M_3 = 2 \times P_{max}^{MD} = 27$. 

As presented in Table~\ref{tab:problem-instance}, numerical experiments are conducted for four distinct network sizes (i.e., the number of LHs, CLs, AMBs, and ALs). For each network size, we generated different cases by varying the number of LHs, CLs, AMBs, and ALs. Moreover, in our numerical experiments, we used different values for the time windows of final delivery destinations (i.e., CLs and AMBs), including three, four, and five hours.

\begin{table}[htbp!]
    \centering
    \caption{Network sizes used in our numerical experiments.\vspace{-5pt}}
    \label{tab:problem-instance}
    \begin{tabular}{ccccc}
    \hline
        Network Size & Number of LHs & Number of CLs & Number of ALs & Number of AMBs\\
        \hline \hline
         Small & 2 & 10 & 3 &2\\
         Medium & 3 & 20 & 8 & 5 \\
         Large & 4 & 50 & 15 & 10 \\
         Very Large & 5 & 100 & 35 & 23 \\
         \hline
    \end{tabular}
\end{table}

\vspace{-5pt}
\subsection{Runtime and Solution Quality of the Solution Methods}

In this section, we compare the performance (i.e., runtime and optimality gap) of the proposed solution methods as well as evaluate their individual performance in solving the \textit{HDDNM} problem for different network sizes (i.e., different numbers of LHs, CLs, ALs, and AMBs) described in Section~\ref{subsec:Experimental_Setup} and different time windows. For brevity and readability, we define a set of acronyms for the solution methods, as shown in Table~\ref{tab:Acronyms}.

\begin{table}[H]
\centering
\caption{Acronyms for different solution methods.\vspace{-5pt}}
\label{tab:Acronyms}
\begin{tabular}{c c p{12.1cm}}
\toprule
Acronym & \makecell[c]{Method \\ Type} & Description \\
\hline \hline
\textit{M-B} & Exact & Solving the \textit{BaM} (i.e., Eqs.~\eqref{eq:obj}--\eqref{E2-Const65}) using Gurobi \\
\textit{M-P} & Exact & Pre-processing and search space reduction in solving the \textit{BaM} (described in Section~\ref{subsec:preprocess}) \\
\textit{M-R} & Exact & Problem-specific reformulations in solving the \textit{BaM} (described in Section~\ref{subsec:reformulation}) \\
\textit{M-V} & Exact & Valid inequalities in solving the \textit{BaM} (described in Section~\ref{subsec:valid-inequality}) \\
\textit{M-S} & Exact & Dynamic cutting planes in solving the \textit{BaM} (described in Section~\ref{subsec:separation}) \\
\textit{M-F} & Exact & Applying all enhancement strategies in solving the \textit{BaM} \\
\textit{M-H} & Heuristic & Using the heuristic solution algorithm presented in Section~\ref{subsec:Heuristic} \\
\bottomrule
\end{tabular}
\end{table}

\vspace{-5pt}We demonstrate the performance of different solution methods by varying the number of LHs, CLs, ALs, and AMBs as the four network sizes described in Section \ref{subsec:Experimental_Setup}, as well as the delivery time window from three hours to five hours. To account for variations in problem settings and to ensure consistent runtime performance evaluation, we ran four replications of each problem instance by changing the sets of the LHs, AMBs, ALs, and CLs across the full dataset described in Section~\ref{subsec:case-study}, and then presented the average runtime and optimality gap of different solution methods for different network sizes and time windows across the four replications in Table~\ref{tab:runtime}. For \textit{M-H}, the optimality gap is calculated as the percentage by which its objective value exceeds the objective value of \textit{M-F} for the simplified variant of the \textit{HDDNM} problem (discussed in Section~\ref{subsec:Heuristic}).

\begin{table}[htbp!]
    \caption{Runtime and solution quality comparison among different solution methods. \enquote{-} and \enquote{${*}$} represent that the corresponding solution method is not able to compute a feasible solution within the one-hour time limit, and the solution quality for the corresponding problem instance cannot be reported, respectively.\vspace{-5pt}}\label{tab:runtime}
    \resizebox{\textwidth}{!}{
    \centering
    \setlength{\tabcolsep}{2.5pt}
        \begin{tabular}{cccccccccccccccc}
            \toprule
            \multirow{3}{*}{\makecell[c]{Network \\ Size}} & \multirow{3}{*}{\makecell[c]{Time \\ Window \\ (hours)}} 
            & \multicolumn{7}{c}{\makecell[c]{Runtime \\ (seconds)}} 
            & \multicolumn{7}{c}{\makecell[c]{Optimality \\Gap (\%)}} \\
            \cmidrule(lr){3-9} \cmidrule(lr){10-16}
            & & \textit{M-B}
            & \textit{M-P}
            & \textit{M-R}
            & \textit{M-V}
            & \textit{M-S}
            & \textit{M-F}
            & \textit{M-H}
            & \textit{M-B}
            & \textit{M-P}
            & \textit{M-R}
            & \textit{M-V}
            & \textit{M-S}
            & \textit{M-F}
            & \textit{M-H} \\
            \midrule \hline
            \multirow{3}{*}{Small} & 5 
                                        & 12.8 & 3.23 & 7.19 & 2.91 & 3.30 & 1.23 & 0.02 
                                        & 0.0 & 0.0 & 0.0 & 0.0 & 0.0 & 0.0 & 0.0 \\
                                    & 4
                                        & 19.3 & 6.74 & 11.3 & 3.97 & 4.63 & 1.74 & 0.01 
                                        & 0.0 & 0.0 & 0.0 & 0.0 & 0.0 & 0.0 & 0.0 \\
                                    & 3
                                        & 69.6 & 11.2 & 25.5 & 5.93 & 5.34 & 3.43 & 0.01 
                                        & 0.0 & 0.0 & 0.0 & 0.0 & 0.0 & 0.0 & 0.0 \\ \hline
            \multirow{3}{*}{Medium} & 5 
                                        & 2386 & 1163 & 1059 & 536 & 712 & 29.0 & 0.05 
                                        & 0.0 & 0.0 & 0.0 & 0.0 & 0.0 & 0.0 & 0.0 \\
                                    & 4
                                        & 3600 & 1329 & 1415 & 973 & 1063 & 48.5 & 0.05 
                                        & 3.1 & 0.0 & 0.0 & 0.0 & 0.0 & 0.0 & 2.4 \\
                                    & 3
                                        & 3600 & 2863 & 3086 & 1944 & 2487 & 486 & 0.03 
                                        & 18.7 & 0.0 & 0.0 & 0.0 & 0.0 & 0.0 & 3.2 \\ \hline
            \multirow{3}{*}{Large} & 5 
                                        & - & - & - & - & - & 3600 & 0.23 
                                        & - & - & - & - & - & 8.6 & 2.8 \\
                                    & 4
                                        & - & - & - & - & - & 3600 & 0.23 
                                        & - & - & - & - & - & 27.8 & 4.2 \\
                                    & 3
                                        & - & - & - & - & - & - & 0.25  
                                        & - & - & - & - & - & - & * \\ \hline
            \multirow{3}{*}{\makecell[c]{Very \\ Large}} & 5 
                                        & - & - & - & - & - & - & 3.24 
                                        & - & - & - & - & - & - & * \\
                                    & 4
                                        & - & - & - & - & - & - & 3.13 
                                        & - & - & - & - & - & - & * \\
                                    & 3
                                        & - & - & - & - & - & - & 2.91  
                                        & - & - & - & - & - & - & * \\ 
            \bottomrule
        \end{tabular}%
    }
\end{table}

We see from Table~\ref{tab:runtime} that the performance of different solution methods varies significantly across different network sizes and time windows. For small-sized networks, all the exact solution methods (i.e., \textit{M-B}, \textit{M-P}, \textit{M-R}, \textit{M-V}, \textit{M-S}, and \textit{M-F}) are able to compute the optimal solution within the one-hour time limit. As demonstrated in Proposition~\ref{prop:NP-hard}, our \textit{HDDNM} problem is NP-hard and thus has an exponential runtime. Therefore, as the network size increases to the medium scale (i.e., three LHs, 20 CLs, eight ALs, and five AMBs), \textit{M-B} (i.e., solving the \textit{BaM} directly with Gurobi) faces computational difficulty and cannot solve the problem to optimality for all the time windows. However, applying the enhancement strategies proposed in Section~\ref{sec:Solution_Method} results in a considerably better computational performance compared to \textit{M-B}, as evident by ($i$) their ability to solve larger-sized networks with different time windows, and ($ii$) their significantly lower average runtime across different network sizes and time windows. Table~\ref{tab:runtime} shows that, each exact enhancement strategy described in Section~\ref{sec:Solution_Method}, when applied individually (i.e., \textit{M-P}, \textit{M-R}, \textit{M-V}, and \textit{M-S}) can provide the optimal solution for the medium-sized network (which is a moderately large network comprising 36 nodes) across different time windows. Moreover, on average, across all network sizes and time windows, \textit{M-P}, \textit{M-R}, \textit{M-V}, and \textit{M-S} are \textbf{3.8, 2.1, 6.4, and 6.1 times faster}, respectively, than \textit{M-B}, considering the problem instances \textit{M-B} converges to optimality within the one-hour time limit.

\begin{remark}
    Valid inequalities and dynamic cutting planes are the most effective individual enhancements for improving computational efficiency in solving the \textit{HDDNM} problem.
\end{remark}

Furthermore, Table~\ref{tab:runtime} demonstrates that our proposed \textit{M-F}, which incorporates all enhancement strategies described in Section~\ref{sec:Solution_Method} (i.e., pre-processing, reformulations, valid inequalities, and dynamic cutting planes), significantly outperforms the commercial solver (i.e., \textit{M-B}). On average, across all network sizes and time windows, \textit{M-F} is \textbf{31 times faster} than \textit{M-B}, considering the problem instances \textit{M-B} converges to optimality within the one-hour time limit. Furthermore, \textit{M-F} can solve more problem instances (i.e., network sizes and time windows) within the one-hour time limit compared to \textit{M-B}. For instance, \textit{M-B} cannot find a feasible solution (as shown with \enquote{-} in Table~\ref{tab:runtime}) for any replication of the large-sized network with five hours time window within the one-hour time limit, whereas \textit{M-F} converges to a less than 10\% average optimality gap among different replications of the same problem instance within the one-hour time limit. The significantly better performance of \textit{M-F} over \textit{M-B} highlights the effectiveness of our proposed exact enhancement strategies---pre-processing, reformulations, valid inequalities, and dynamic cutting planes---in solving the \textit{HDDNM} problem. The proposed enhancement strategies reduce the solution space and tighten the LP-relaxation, thus improving the computational performance of \textit{M-F} compared to \textit{M-B}.

However, we observe from Table~\ref{tab:runtime} that \textit{M-F}---the fastest exact solution method---cannot solve large-sized and very-large-sized networks to optimality within the one-hour time limit. Therefore, to solve large-scale instances of the \textit{HDDNM} problem faster, we used \textit{M-H}, which ($i$) consistently outperforms the other solution methods in terms of runtime while maintaining a high-quality solution (compared to the exact solution of the simplified variant of the \textit{HDDNM} problem), and ($ii$) can solve the largest-sized network (i.e., very large network) comprising more than 100 nodes within a very short runtime (i.e., less than five seconds). Our proposed heuristic approach (i.e., \textit{M-H} described in Algorithm~\ref{alg:Full_Heuristic}) significantly reduces the computational burden, especially for larger-sized networks, where exact solution methods (e.g., \textit{M-F}) cannot find the optimal solution within the one-hour time limit. On average, across all network sizes and time windows, \textit{M-H} provides high-quality near-optimal solutions and is \textbf{3,375 times faster} than \textit{M-F}, considering the problem instances \textit{M-F} converges to optimality within the one-hour time limit. Therefore, our proposed \textit{M-F} and \textit{M-H} can serve as viable decision-support tools for healthcare logistics system owners, enabling them to make fast and reliable decisions in efficiently designing a hierarchical H\&S drone delivery network for delivering time-sensitive medical items across large-scale regionally distributed healthcare networks.

Another noteworthy observation is the effect of the time window on the computational performance of different solution methods. We see from Table \ref{tab:runtime} that although the time window does not affect the runtime of \textit{M-H}, the computational runtime of all the exact solution methods is strongly affected by the time window. As the time window decreases (i.e., the delivery due time becomes more restrictive), each drone can deliver fewer packages without violating the delivery due times, thus requiring a larger number of drones in both echelons to satisfy the tighter time windows. Due to the explicit drone indexing in the \textit{BaM}, this increased required number of drones introduces additional non-zero decision variables and requires more branching operations within the B\&B tree. Therefore, decreasing the time window makes solving the \textit{HDDNM} problem computationally more challenging for all the exact solution methods. For instance, the runtime of \textit{M-B}, \textit{M-P}, \textit{M-R}, \textit{M-V}, \textit{M-S}, and \textit{M-F} increases by 442\%, 247\%, 255\%, 104\%, 62\%, and 179\%, respectively, for the small-sized network as the time window decreases from five hours to three hours.

As explained above, \textit{M-H} solves large-scale instances of the \textit{HDDNM} problem in a very short time while providing high-quality solutions with an average optimality gap of less than 4.2\%. To further demonstrate the solution quality of \textit{M-H} across different network sizes and time windows, Table~\ref{tab:Heuristic_Gap} shows the best and average optimality gaps of \textit{M-H} among different replications of the network sizes and time windows for which the exact method, \textit{M-F}, can find a feasible solution to make it possible to compute the optimality gap of \textit{M-H}. \vspace{-5pt}

\begin{table}[H]
    \centering
    \caption{The best and average optimality gap of \textit{M-H} for different network sizes and time windows among different replications.\vspace{-5pt}}
    \begin{tabular}{cccc}
        \toprule
        Network Size & Time Window (hours) & Best Optimality Gap (\%) & Average Optimality Gap (\%) \\
        \hline \hline
        \multirow{3}{*}{Small}  & 5 & 0.0 & 0.0 \\
                                & 4 & 0.0 & 0.0 \\
                                & 3 & 0.0 & 0.0 \\ \hline
        \multirow{3}{*}{Medium} & 5 & 0.0 & 0.0 \\
                                & 4 & 0.0 & 2.4 \\
                                & 3 & 2.5 & 3.2 \\ \hline
        \multirow{2}{*}{Large}  & 5 & 0.0 & 2.8 \\
                                & 4 & 3.2 & 4.2 \\ \hline
    \end{tabular}
    \label{tab:Heuristic_Gap}
\end{table}
\vspace{-5pt}

We see from Table~\ref{tab:Heuristic_Gap} that our \textit{M-H} consistently provides high-quality near-optimal solutions, with the best optimality gap of less than 3.2\%. Moreover, Table~\ref{tab:Heuristic_Gap} shows that the best optimality gap obtained by \textit{M-H} varies across different network sizes and time windows. For small-sized networks, \textit{M-H} consistently converges to the optimal solution across all time windows. For larger-sized networks (i.e., medium-sized and large-sized networks), the best solution quality (i.e., optimality gap) improves as the time window increases (i.e., the delivery due time becomes more relaxed). This is because, as the time window increases, the temporal constraints on the MD-trips are less restrictive, which expands the feasible space for MDs. In our \textit{M-H}, the routing and scheduling of MDs in echelon 2 is solved using the greedy Algorithm~\ref{alg:Greedy} in Appendix~\ref{Appendix_Algorithms}, which sequentially assigns CLs to MDs based on ($i$) the remaining drone capacity (i.e., PWCC and $P_N^{MD}$), and ($ii$) the delivery due times. Therefore, increasing the time window allows the greedy Algorithm~\ref{alg:Greedy} to prioritize the remaining drone capacity over strict delivery due times (as due times are relaxed with longer time windows) when assigning CLs to the MDs. As a result, the sequential assignment of the sorted CLs to the MDs in Algorithm~\ref{alg:Greedy} is more likely to converge to the true optimal solution for the routing and scheduling of MDs in echelon 2. Moreover, from our numerical experiments, we observe that the solution quality of \textit{M-H} improves as the network becomes more sparse (i.e., CLs are closer to a single LH rather than accessible by all LHs). This is because spatial sparsity between CLs and LHs reduces the overlap between the service areas of different LHs. This reduces the potential for obtaining a suboptimal CL-to-LH assignment in Step 1 of Algorithm~\ref{alg:Heuristic} compared to the optimal assignment of CLs to the LHs obtained from the exact solution. Proposition~\ref{prop:Exact_Heuristic} provides the sufficient conditions under which \textit{M-H} is guaranteed to converge to the optimal solution of the simplified variant of the \textit{HDDNM} problem. The proof of Proposition~\ref{prop:Exact_Heuristic} is provided in Appendix~\ref{Appendix_Proof}.\vspace{-5pt}

\begin{prop}\label{prop:Exact_Heuristic}
    The solution obtained by Algorithm~\ref{alg:Full_Heuristic} is the exact optimal solution to the simplified variant of the \textit{HDDNM} problem if the following conditions hold: $(i)$ $\mathcal{H}_i \cap \mathcal{H}_j = \emptyset, \forall i,j \in \mathcal I$; $(ii)$ $L_i \gg T_E + \sum_{(h,k) \in \mathcal E} T_{hk}^{LD} + |\mathcal H|T_U^{LD} + T_L^{LD} + \sum_{(i,j) \in \mathcal A}T_{ij}^{MD} + P_N^{MD} T_U^{MD} + T_L^{MD}$; $(iii)$ $\max_{\mathcal{S} \subseteq \mathcal{I}, |\mathcal{S}| = P_N^{MD}} \left( \sum_{i \in \mathcal{S}} P_i \right) \leq P_{max}^{MD}$; and $(iv)$ $P_i/PU \in \mathbb{Z}^+, \forall i \in \mathcal I$ (i.e., $PU$ is the common divisor of all package weights).
\end{prop}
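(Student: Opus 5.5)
The plan is to show that, under conditions $(i)$--$(iv)$, each of the five steps of Algorithm~\ref{alg:Heuristic} (and the post-processing in Algorithm~\ref{alg:Full_Heuristic}) makes either a forced decision or an optimal one. The algorithm's output then coincides with an optimal solution of the simplified variant. Throughout we use that the simplified variant's objective is the fleet size, since energy costs are excluded. Its optimum is therefore the minimum number of LDs plus the minimum total number of MDs, provided the two echelons can be optimized separately. The argument has three parts: first decoupling, then optimality in echelon 2, then optimality in echelon 1.

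\textbf{Step 1 (decoupling).} By condition $(i)$, we read the sets $\mathcal H_i$ as singletons (each CL has exactly one feasible LH), so that model~\eqref{Heuristic_Set_Partition} has a unique feasible solution. Any feasible solution of the simplified variant must assign CL $i$ to some $h\in\mathcal H_i$, because $\mathcal H_i$ is defined by a necessary timing condition. Hence the assignment $\beta_{ih}$ of every feasible solution, and in particular of the optimal one, equals the heuristic's assignment. Once $\beta$ is fixed, the LH demands $s_h$ are determined. The only remaining coupling between echelons is the LH due time $\pi_h$, which we neutralize in Step 3.

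\textbf{Step 2 (echelon 2).} Condition $(iii)$ says any $P_N^{MD}$ packages fit within $P_{max}^{MD}$, so the weight capacity is never binding. Condition $(ii)$ says the due times exceed the duration of any complete route, so no due time is binding either. Because trips are single-trip, the minimal number of MDs at LH $h$ is then exactly $\lceil |\{i:\beta_{ih}=1\}|/P_N^{MD}\rceil$. We then argue that the greedy Algorithm~\ref{alg:Greedy} fills each MD up to $P_N^{MD}$ before opening a new one, since no capacity or deadline check can fail, and so it attains this bound. The 2-OPT post-processing only permutes stops within a route and keeps feasibility, so it does not change the count.

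\textbf{Step 3 (echelon 1).} Condition $(ii)$ again implies that the due times $\pi_h$ from Step 4 of Algorithm~\ref{alg:Heuristic} are not binding for any pattern-path. So every pattern-path satisfying battery and PWCC limits belongs to $\mathcal Q$. Condition $(iv)$ means the discretization $\hat P_i=P_i$ is exact, so no capacity is lost to rounding. Model~\eqref{Set_Covering} is then an exact formulation of the minimum number of LD routes covering the demands $s_h$ over all feasible routes. It remains to show that the set-covering optimum is realizable by individual packages, i.e., that model~\eqref{MILP-LD} is feasible with the same number of LDs.

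\textbf{Main obstacle.} This last point is the delicate step: a covering by aggregate weights $W_h^q$ need not decompose into whole packages. We would try to use $(iv)$ together with the freedom in Algorithm~\ref{alg:Heuristic} to choose patterns. Among optimal coverings, we would select one whose per-hub amounts are sums of package weights, using an exchange argument that shifts units between patterns visiting the same hub. Alternatively, we would restrict $\mathcal Q$ so that $W_h^q$ values are package-realizable. With that established, the objective values of both echelons match the lower bounds, which completes the optimality claim.
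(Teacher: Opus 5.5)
Your three-part plan matches the paper's proof: the CL-to-LH assignment is forced by $(i)$, the greedy MD assignment is optimal by $(ii)$--$(iii)$, and model~\eqref{Set_Covering} is exact by $(iv)$ and full enumeration of $\mathcal Q$. However, the step you flag as the main obstacle is a genuine gap. The paper does not close it either. Its part (c) only asserts that, since $\hat P_i=P_i$ and $\mathcal Q$ is fully enumerated, model~\eqref{MILP-LD} gives the optimal LD routing. It never shows that \eqref{MILP-LD} is feasible on the covering $\tilde{\mathcal R}^{LD}$.

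Neither of your repairs can work from $(i)$--$(iv)$ alone. Take a single LH; under $(ii)$ as you read it, the timing test defining $\mathcal H_i$ passes at every LH, so your singleton reading of $(i)$ in fact forces $|\mathcal H|=1$. Add three CLs with $P_i=2\,PU$, $P_{max}^{LD}=3\,PU$, $P_N^{MD}=3$, $P_{max}^{MD}\ge 6\,PU$, ample batteries and huge due times. All four conditions hold. Every optimal solution of \eqref{Set_Covering} uses two pattern-paths with $W_h^q=3\,PU$ each, yet no LD can carry two $2\,PU$ packages. So \eqref{MILP-LD} is infeasible on every optimal covering, and Algorithm~\ref{alg:Full_Heuristic} returns no LD plan at all. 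This holds whether you take the simplified variant's optimum to be $2$ (divisible commodity) or $3$ (whole packages). Consequently there is no realizable optimal covering to exchange toward. Restricting $\mathcal Q$ to package-realizable amounts changes Step~5 of Algorithm~\ref{alg:Heuristic} rather than analyzing it.

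What you can prove is conditional. Your Step~3 does show that $|\tilde{\mathcal R}^{LD}|$ is the LD optimum of the divisible single-commodity variant. Moreover, if \eqref{MILP-LD} is feasible, it must use every route of $\tilde{\mathcal R}^{LD}$. By $(iv)$, $s_h=\sum_i P_i\beta_{ih}$, and $W_h^r\ge \sum_i P_i x_{ih}^r$, so the routes it uses already cover the $s_h$; using fewer would contradict minimality of \eqref{Set_Covering}. The proposition therefore becomes true once you add feasibility of \eqref{MILP-LD} as a hypothesis, or a sufficient condition for it such as every package weighing exactly $PU$.

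There is a second, unflagged gap in your Step~2. The claim that no check in Algorithm~\ref{alg:Greedy} can fail ignores its Energy test: $(ii)$--$(iii)$ make due times and PWCC slack but say nothing about $B_{max}^{MD}-B_{min}^{MD}$. Take $P_N^{MD}=2$ and CLs $a,b,c,d$ in due-time order, where only $\{a,b\},\{a,c\},\{b,d\}$ are energy-feasible pairs. First-fit then builds $\{a,b\},\{c\},\{d\}$, although $\{a,c\},\{b,d\}$ suffices. The paper's part (b) makes the same silent assumption, so you need to state a non-binding battery condition explicitly.
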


\vspace{-5pt}

\vspace{-5pt}
\subsection{Effect of Fleet Type in Echelon 2 on the Total Cost and the Required Fleet Composition}

In this section, we present insights into how much cost-efficient a heterogeneous fleet of MDs and SDs with distinct characteristics---cost, battery capacity, power consumption, PWCC, speed, and delivery mode---is compared to a homogeneous fleet of MDs in echelon 2 in delivering time-sensitive medical items across a regional hierarchical H\&S network. Table~\ref{tab:Effect_Fleet_Composition} shows the effect of fleet type in echelon 2 on the total cost, and the required fleet composition and fleet size. In this analysis, we used the medium-sized network containing three LHs, 20 CLs, eight ALs, and five AMBs with a five-hour time window and solved using \textit{M-F}.\vspace{-5pt}

\begin{table}[H]
    \centering
    \caption{Effect of fleet type in echelon 2 on the total cost and the required fleet composition. \enquote{-} denotes that the corresponding drone type is not present in the corresponding fleet.\vspace{-5pt}}
    \label{tab:Effect_Fleet_Composition}
    \resizebox{\textwidth}{!}{
        \begin{tabular}{ccccccccc}
            \hline
            \multirow{2}{*}{\makecell[c]{Fleet \\ Type}} & \multicolumn{2}{c}{Cost} & \multicolumn{2}{c}{\makecell[c]{Required Number \\ of LD-Trips}} & \multicolumn{2}{c}{\makecell[c]{Required Number \\ of MD-Trips}} & \multicolumn{2}{c}{\makecell[c]{Required Number \\ of SDs}} \\
            \cmidrule(lr){2-3} \cmidrule(lr){4-5} \cmidrule(lr){6-7} \cmidrule(lr){8-9}
                                                & \makecell[c]{Total \\ Cost (\$)}  & \makecell[c]{Percentage \\ Increase} & \makecell[c]{Total \\ LDs}  &   \makecell[c]{Total \\ Trips}  &  \makecell[c]{Total \\ VTOLs}  &  \makecell[c]{Total \\ Trips}      & DJI    & Tarot  \\ \hline \hline
            Heterogeneous               & 669,912        &  0.0\%       &  1     &   3    &  2     &   6    &    3   &  3 \\
            Homogeneous                         & 765,603        &  12.5\%      &  1     &   3    &  4     &   10    &   -   &  -     \\ \hline
        \end{tabular}
    }
\end{table}
\vspace{-5pt}
We used different fleet types in echelon 2, including: ($i$) a heterogeneous fleet of drones (considered as the baseline fleet) comprising rotary quadcopter (i.e., Tarot) and rotary hexacopter (i.e., DJI) as SD types, and VTOL drones as the MD type; and ($ii$) a homogeneous fleet of VTOL drones (i.e., only MDs). The percentage increase in the total cost due to changing the fleet type in Table~\ref{tab:Effect_Fleet_Composition} is calculated compared to the total cost of the baseline fleet. We see from Table~\ref{tab:Effect_Fleet_Composition} that the total cost increases as the fleet type becomes less diverse (i.e., the fleet type comprises less number of drone types). This is because, due to their physical characteristics, rotary quadcopter (i.e., Tarot) and rotary hexacopter (i.e., DJI) are more cost- and energy-efficient than VTOL drones in delivering light packages to short distances, as demonstrated in~\citep{bhuiyan2024aerial}. Therefore, for final delivery locations closer to the LHs and requiring delivery of lighter package weights, using a shorter-range drone---a rotary quadcopter (i.e., Tarot) or a rotary hexacopter (i.e., DJI)---is more cost-efficient than using a VTOL drone. As a result, reducing the number of drone types in the fleet forces the optimal solution of the \textit{HDDNM} problem to use a larger number of less cost-efficient drones (i.e., VTOLs) for shorter-range deliveries, which increases the total cost. Notably, we see from Table~\ref{tab:Effect_Fleet_Composition} that the required number of MDs increases by 100\% as the fleet in echelon 2 changes from a heterogeneous fleet of MDs and SDs to a homogeneous fleet of MDs. These insights can benefit logistics system owners in designing a cost-efficient drone fleet within a hierarchical H\&S structure to deliver time-sensitive medical items.

\vspace{-5pt}
\subsection{Effect of Maximum Permissible Delay on the Total Cost and the Required Fleet Composition}

As mentioned in Section~\ref{sec:Problem_Description}, each medical item has a distinct time window defined by its release time at the CD and its delivery due time at the corresponding delivery destination (i.e., CL or AMB). The time difference between the release time and delivery due time is determined based on a maximum permissible delay, which differs across different businesses and product types based on the time-criticality of the products (e.g., defibrillator vs. blood sample). Therefore, in this section, we present insights into how the maximum permissible delay between the release time of the packages at the CD and their delivery due times at the corresponding delivery destinations affects the total cost, and the required fleet composition and fleet size. Table~\ref{tab:Effect_Time_Window} shows the effect of increasing the maximum permissible delay on the total cost and the required number of drones of each type. In this analysis, we used the medium-sized network containing three LHs, 20 CLs, eight ALs, and five AMBs and solved using \textit{M-F}.

\begin{table}[h!]
    \centering
    \caption{Effect of maximum permissible delay on the total cost and the required fleet composition.\vspace{-5pt}}
    \label{tab:Effect_Time_Window}
    \resizebox{\textwidth}{!}{
        \begin{tabular}{ccccccccc}
            \hline
            \multirow{2}{*}{\makecell[c]{Maximum \\ Permissible \\ Delay \\ (minutes)}} & \multicolumn{2}{c}{Cost} & \multicolumn{2}{c}{\makecell[c]{Required Number \\ of LD-Trips}} & \multicolumn{2}{c}{\makecell[c]{Required Number \\ of MD-Trips}} & \multicolumn{2}{c}{\makecell[c]{Required Number \\ of SDs}} \\
            \cmidrule(lr){2-3} \cmidrule(lr){4-5} \cmidrule(lr){6-7} \cmidrule(lr){8-9}
                                                & \makecell[c]{Total \\ Cost (\$)}  & \makecell[c]{Percentage \\ Decrease} & \makecell[c]{Total \\ LDs}  &   \makecell[c]{Total \\ Trips}  &  \makecell[c]{Total \\ VTOLs}  &  \makecell[c]{Total \\ Trips}      & DJI    & Tarot  \\ \hline \hline
            76                & 2,573,944        &  0.0\%       &  5     &   7    &  5     &   9     &   4    &  0     \\ 
            90                & 1,802,711        &  30.0\%      &  3     &   6    &  6     &   11    &   3    &  1     \\ 
            120               & 1,747,582        &  32.1\%      &  3     &   6    &  5     &   10    &   3    &  3     \\
            150               & 1,590,083        &  38.2\%      &  3     &   5    &  3     &   5     &   4    &  3     \\
            180               & 1,232,906        &  52.1\%      &  2     &   4    &  4     &   6     &   4    &  4     \\
            210               &   912,959        &  64.5\%      &  1     &   3    &  6     &   9     &   2    &  3     \\
            240               &   797,172        &  69.0\%      &  1     &   3    &  4     &   8     &   3    &  3     \\
            270               &   737,885        &  71.3\%      &  1     &   3    &  3     &   7     &   4    &  2     \\
            300               &   669,912        &  74.0\%      &  1     &   3    &  2     &   6     &   3    &  3     \\
            330               &   660,398        &  74.3\%      &  1     &   3    &  2     &   7     &   2    &  2     \\
            360               &   599,798        &  76.7\%      &  1     &   3    &  1     &   7     &   2    &  2     \\ \hline
        \end{tabular}
    }
\end{table}

We gradually increase the maximum permissible delay starting from the most restricted maximum permissible delay (i.e., 76 minutes in this experiment), considered as the baseline, to the most relaxed one (i.e., 360 minutes in this experiment). The most restricted maximum permissible delay is the shortest delay with which the delivery due time of all delivery locations (i.e., CLs and AMBs) can be satisfied (i.e., the problem remains feasible), whereas the most relaxed is the one from which further increment in the delay does not affect the optimal solution. The percentage decrease in the total cost due to increasing the maximum permissible delay in Table \ref{tab:Effect_Time_Window} is calculated compared to the total cost corresponding to the baseline maximum permissible delay (i.e., 76-minute maximum permissible delay).

We see from Table~\ref{tab:Effect_Time_Window} that the total cost and the required fleet size (i.e., the total number of drones) decrease as the maximum permissible delay increases. Notably, as the maximum permissible delay increases from the most restricted maximum permissible delay (i.e., 76 minutes) to the most relaxed one (i.e., 360 minutes), the total cost and the required fleet size decrease by 76.7\% and 57.1\%, respectively. This is because, as the maximum permissible delay increases, the delivery due times become less restrictive. Therefore, each drone can deliver a larger number of packages to the corresponding delivery destinations. Furthermore, increasing the maximum permissible delay allows LDs and MDs (performing multi-trip circular deliveries) to perform additional trips while maintaining the delivery due times. This results in reducing the required number of drones of each type. Notably, we see from Table~\ref{tab:Effect_Time_Window} that the average number of trips performed by each LD and MD increases by 114\% and 289\%, respectively, as the maximum permissible delay increases from the most restricted maximum permissible delay (i.e., 76 minutes) to the most relaxed maximum permissible delay (i.e., 360 minutes). These findings can benefit logistics system owners in designing their H\&S drone delivery network, depending on the time-criticality of their medical items.

\vspace{-5pt}
\subsection{Effect of Consolidation Delay at the LHs on the Total Cost and the Required Fleet Composition}

As mentioned in Section~\ref{sec:Problem_Description}, we assume that once a package is delivered to an LH, it can be picked up by a drone in echelon 2 to perform the last-mile delivery. However, in practical healthcare logistics, the transfer of medical items between the two echelons may face delays due to logistical limitations (e.g., clinical protocols and specialized repackaging of temperature-sensitive medical items). Therefore, in this section, we introduce a consolidation delay parameter, $T_C$, to account for the time delay in transferring medical items at the LHs, and present insights into how the consolidation delay at the LHs affects the total cost and the required fleet composition. To model the consolidation delay at the LHs, we modify constraints~\eqref{eq:Int-Const2} of the \textit{BaM} (i.e., Eqs. \eqref{eq:obj} - \eqref{E2-Const65}) as constraints~\eqref{eq:Int-Const2-Consld}.
\vspace{-5pt}
\begin{align}
    & at_h^{dg} - M_1 (1 - w_{ph}^{dg}) \leq \pi_p - T_C \leq at_h^{dg} + M_1 (1 - w_{ph}^{dg}), \quad \forall p\in\mathcal P,\, h\in\mathcal H,\, d\in\mathcal D,\, g\in\mathcal G^{LD}  \label{eq:Int-Const2-Consld}
\end{align}

Figure~\ref{fig:Effect_Consolidation_Delay} shows the effect of increasing the consolidation delay at the LHs on the total cost, and the required fleet composition and fleet size. In this analysis, we used the medium-sized network containing three LHs, 20 CLs, eight ALs, and five AMBs with a five-hour time window and solved using \textit{M-F}.

\begin{figure}[h!]
    \centering
    \includegraphics[width=0.75\linewidth]{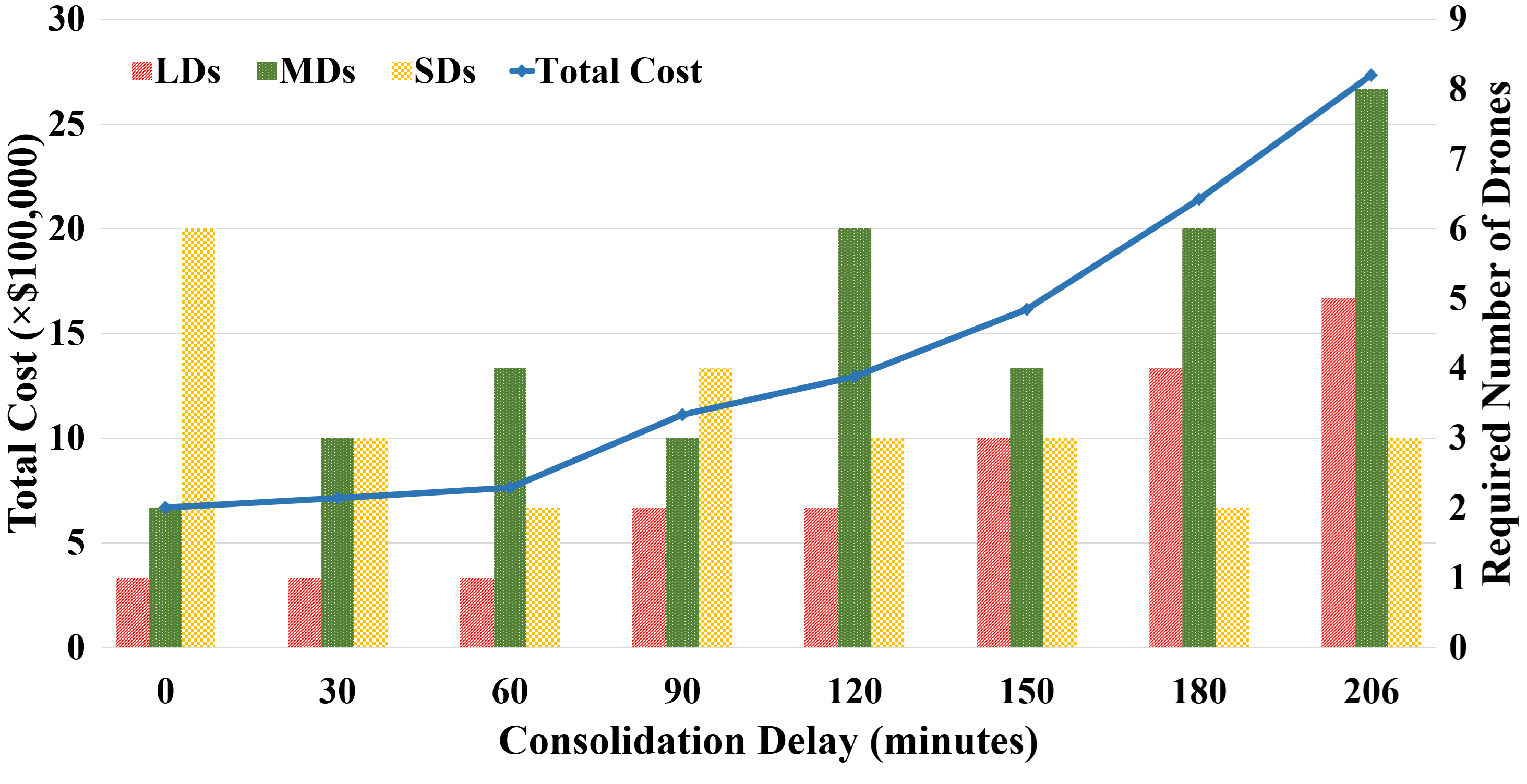}
    \vspace{-10pt}\caption{Effect of consolidation delay on the total cost and the required fleet composition.\vspace{-10pt}}
    \label{fig:Effect_Consolidation_Delay}
\end{figure}

We gradually increase the consolidation delay at the LHs starting from no consolidation delay (i.e., $T_C=0$) until the maximum possible consolidation delay (i.e., 206 minutes in this experiment) beyond which the delivery due time of all delivery locations (i.e., CLs and AMBs) cannot be satisfied. Figure~\ref{fig:Effect_Consolidation_Delay} demonstrates that the total cost increases as the consolidation delay increases. Notably, as the consolidation delay at the LHs increases from no consolidation delay (i.e., 0 minutes) to the maximum possible consolidation delay (i.e., 206 minutes), the total cost increases by 308\%. This is because, as the consolidation delay increases, packages require longer times to be ready before being picked up by the drones in echelon 2. Therefore, the total time required to deliver a package from the CD to the final delivery destination increases. To compensate this longer consolidation delay, the optimal solution of the \textit{HDDNM} problem can either ($i$) use a larger number of LDs to deliver packages earlier to the LHs in echelon 1, or ($ii$) use a larger number of MDs---the fastest drone type in echelon 2---to satisfy the delivery due times, or ($iii$) both. Notably, we see from Figure~\ref{fig:Effect_Consolidation_Delay} that the required number of LDs and MDs increases by 400\% and 300\%, respectively, as the consolidation delay at the LHs increases from no consolidation delay (i.e., 0 minutes) to the maximum possible consolidation delay (i.e., 206 minutes). These insights benefit practitioners in designing their H\&S drone delivery network, depending on their logistical limitations that impose consolidation delays in the delivery process.

\vspace{-5pt}
\subsection{Benefit of Modeling Multi-Trip Delivery Mode of LDs and MDs on the Total Cost and the Required Fleet Composition}

In this section, we present insights into how much is the benefit of modeling the multi-trip delivery mode for LDs and MDs compared to the single-trip delivery mode. To demonstrate the benefit of our comprehensive multi-trip modeling of MDs and LDs, Table~\ref{tab:Effect_Trip_Number} shows the effect of reducing the number of possible trips of LDs and MDs (multi-trip vs. single-trip delivery mode) on the total cost and the required number of drones of each type. In this analysis, we used the medium-sized network containing three LHs, 20 CLs, eight ALs, and five AMBs with a five-hour time window and solved using \textit{M-F}.

\begin{table}[h!]
    \centering
    \caption{Effect of the number of possible trips of LDs and MDs (multi-trip vs. single-trip delivery mode) on the total cost and the required fleet composition. \enquote{-} denotes that there is no limitation on the number of possible trips of LDs and MDs.\vspace{-5pt}}
    \label{tab:Effect_Trip_Number}
    \resizebox{\textwidth}{!}{
        \begin{tabular}{ccccccccc}
            \hline
            \multirow{2}{*}{\makecell[c]{Number of \\ Possible \\ Trips}} & \multicolumn{2}{c}{Cost} & \multicolumn{2}{c}{\makecell[c]{Required Number \\ of LD-Trips}} & \multicolumn{2}{c}{\makecell[c]{Required Number \\ of MD-Trips}} & \multicolumn{2}{c}{\makecell[c]{Required Number \\ of SDs}} \\
            \cmidrule(lr){2-3} \cmidrule(lr){4-5} \cmidrule(lr){6-7} \cmidrule(lr){8-9}
                                                & \makecell[c]{Total \\ Cost (\$)}  & \makecell[c]{Percentage \\ Increase} & \makecell[c]{Total \\ LDs}  &   \makecell[c]{Total \\ Trips}  &  \makecell[c]{Total \\ VTOLs}  &  \makecell[c]{Total \\ Trips}      & DJI    & Tarot  \\ \hline \hline
            -                & 669,912          &  0.0\%       &  1     &   3    &  2     &   6     &   3   &  3 \\
            2                & 1,147,944        &  71.3\%      &  2     &   3    &  3     &   4     &   4    &  3     \\ 
            1                & 1,626,528        &  142.8\%     &  3     &   3    &  4     &   4     &   5    &  2     \\  \hline
        \end{tabular}
    }
\end{table}
We decrease the number of possible trips of LDs and MDs starting from no restriction on the number of possible trips (considered as the baseline number of trips) until the least number of possible trips (i.e., single-trip delivery mode for LDs and MDs). The percentage increase in the total cost due to reducing the number of possible trips in Table~\ref{tab:Effect_Trip_Number} is calculated compared to the total cost of the baseline number of trips. We see from Table~\ref{tab:Effect_Trip_Number} that the total cost and the required fleet size increase as the number of possible trips of LDs and MDs decreases. Notably, as the number of possible trips of LDs and MDs decreases from no restriction (i.e., the multi-trip delivery mode) to one trip (i.e., the single-trip delivery mode), the total cost and the total required number of drones increase by 142.8\% and 55.6\%, respectively. This is because reducing the number of possible trips of LDs and MDs forces the optimal solution of the \textit{HDDNM} problem to use a larger number of LDs and MDs (i.e., a larger required fleet size) to deliver packages satisfying the delivery due times, which increases the total cost.

\vspace{-10pt}
\section{Conclusion}

In this paper, we studied a hierarchical H\&S drone delivery network design with inter-hub and intra-hub coordination to deliver time-sensitive medical items with distinct characteristics---package weights, release times, and delivery due times---to regionally distributed fixed and mobile delivery destinations while minimizing the total investment and operational costs. We considered a heterogeneous fleet of drones with distinct costs, PWCCs, battery capacities, speeds, power consumption, and delivery modes (direct delivery or circular delivery), performing multi-trip delivery operations across the two echelons of the H\&S network. We also modeled adaptive battery swapping for drones, where drones with larger battery capacities and flight ranges perform exogenous battery swaps, whereas drones with smaller battery capacities and flight ranges endogenously decide on battery swaps on an as-needed basis. 

We formulated the problem with these practical aspects as a novel MILP model. To efficiently solve this problem, we proposed an exact solution method integrating a set of new enhancement strategies to reduce the search space and tighten the LP-relaxation, including (1) defining restricted sets and variable fixings, (2) developing problem-specific reformulations, (3) proposing valid inequalities, and (4) designing dynamic cutting plane generation procedures. We also proposed a fast heuristic algorithm to solve large-scale instances of the problem within a reasonable time while sacrificing the solution quality by a small amount. We applied the proposed models and solution methods to a real-life case study of whole blood delivery data from the Interpath Laboratory, Inc.---a healthcare logistics company---located in Pendleton, Oregon, USA. The numerical results based on actual drone flight test data from tests conducted at the Idaho National Laboratory test site demonstrate that our exact solution method is \textit{31} times faster than the Gurobi solver. Moreover, our heuristic solution algorithm is \textit{3,375} times faster than the exact solution method, and solves a real-life problem size of a healthcare network comprising more than 100 nodes within a very short time while providing high-quality solutions.

Moreover, results demonstrate that using a heterogeneous fleet of drones in echelon 2 is 12.5\% more cost-efficient than using a homogeneous fleet of drones. Numerical results show that the total cost and the required fleet size decrease by 76.7\% and 57.1\%, respectively, as the maximum permissible delay increases from 76 minutes to 360 minutes. Furthermore, results indicate that the consolidation delay at regional hospitals due to logistical limitations increases the total cost and the required number of drones. As the consolidation delay increases from \enquote{no delay} to 206 minutes, the total cost, and the required number of LDs and MDs increase by 308\%, 400\%, and 300\%, respectively. We also studied the benefit of modeling the multi-trip delivery mode of LDs and MDs on the total cost and the required fleet composition. Our numerical results demonstrate that, as the delivery mode of LDs and MDs changes from multi-trip to single-trip, the required number of drones and total cost increase by 55.6\% and 142.8\%, respectively.

The outcomes of this research and the insights provided by the numerical experiments are pivotal for the practical implementation of a heterogeneous drone fleet in delivering time-sensitive products across a hierarchical H\&S network. Our proposed exact and heuristic solution methods can serve as fast and reliable decision-support tools for practitioners and logistics system owners in leveraging a heterogeneous fleet of drones to deliver time-sensitive medical items with distinct weights, release times, and delivery due times to regionally distributed fixed and mobile delivery destinations. While the optimization models and the solution methods were developed for the proposed hierarchical H\&S drone delivery network design with novel practical aspects studied in this paper, logistics system owners can use the proposed algorithms on their own data, network configuration, and heterogeneous vehicle fleet for designing a hierarchical H\&S network to deliver time-sensitive medical items to regionally distributed delivery destinations.

This study can be further extended to account for the failure contingencies of drones during the delivery operations. A two-stage robust optimization model can be used to model the problem under $N-k$ failure contingencies, where the first-stage decisions determine the required number of drones of each type, whereas the routing and scheduling of drones are determined in the second stage under the worst-case contingencies. Another possible future research direction is to include the routing of mobile ambulances to the ambulance locations in the hierarchical network, and study the problem as a three-echelon network structure.

\vspace{-10pt}
\section*{Acknowledgment}
\vspace{-5pt}
We acknowledge the support of Interpath Laboratory Inc. for providing us the whole blood delivery data. We acknowledge the support of Mr. Victor Walker, Principal Research Scientist, Idaho National Laboratory (INL), for providing us the drone flight test data. The drone flight test data were obtained from a previous collaborative project between INL and The University of Texas at San Antonio funded by the U.S. Department of Energy Vehicle Technologies Office under the Systems and Modeling for Accelerated Research in Transportation Mobility (SMART) Laboratory Consortium, an Initiative of the Energy Efficient Mobility Systems Program under the Department of Energy Idaho Operations Office Contract No. DE-AC07-05ID14517.

\vspace{-10pt}
\section*{Data Availability}
\vspace{-5pt}
The data that support the findings of this study will be made available by the corresponding author upon reasonable request.

\vspace{-10pt}
\setlength{\bibsep}{1.5pt} 
\bibliographystyle{apalike}
\bibliography{References}

@article{bhuiyan2024aerial,
  title={Aerial drone fleet deployment optimization with endogenous battery replacements for direct delivery of time-sensitive products},
  author={Bhuiyan, Tanveer Hossain and Walker, Victor and Roni, Mohammad and Ahmed, Imtiaz},
  journal={Expert Systems with Applications},
  volume={252},
  pages={124172},
  year={2024},
  publisher={Elsevier},
  note = {\url{https://doi.org/10.1016/j.eswa.2024.124172}}
}

@article{li2025hierarchical,
  title={A hierarchical hub location model for the integrated design of urban and rural logistics networks under demand uncertainty},
  author={Li, Zhi-Chun and Bing, Xue and Fu, Xiaowen},
  journal={Annals of Operations Research},
  volume={348},
  number={2},
  pages={1087--1108},
  year={2025},
  publisher={Springer},
  note = {\url{https://doi.org/10.1007/s10479-023-05189-6}}
}

@article{sun2024multi,
  title={A multi-emission-driven efficient network design for green hub-and-spoke airline networks},
  author={Sun, Mengyuan and Tian, Yong and Dong, Xingchen and Lv, Yangyang and Zhang, Naizhong and Li, Zhixiong and Li, Jiangchen},
  journal={IET Intelligent Transport Systems},
  volume={18},
  number={2},
  pages={346--376},
  year={2024},
  publisher={Wiley Online Library},
  note = {\url{https://doi.org/10.1049/itr2.12455}}
}

@article{lin2008integral,
  title={An integral constrained generalized hub-and-spoke network design problem},
  author={Lin, Cheng-Chang and Chen, Sheu-Hua},
  journal={Transportation Research Part E: Logistics and Transportation Review},
  volume={44},
  number={6},
  pages={986--1003},
  year={2008},
  publisher={Elsevier},
  note = {\url{https://doi.org/10.1016/j.tre.2008.02.001}}
}

@article{carlsson2013euclidean,
  title={Euclidean hub-and-spoke networks},
  author={Carlsson, John Gunnar and Jia, Fan},
  journal={Operations Research},
  volume={61},
  number={6},
  pages={1360--1382},
  year={2013},
  publisher={INFORMS},
  note = {\url{https://doi.org/10.1287/opre.2013.1219}}
}

@article{correia2011hub,
  title={Hub and spoke network design with single-assignment, capacity decisions and balancing requirements},
  author={Correia, Isabel and Nickel, Stefan and Saldanha-da-Gama, Francisco},
  journal={Applied Mathematical Modelling},
  volume={35},
  number={10},
  pages={4841--4851},
  year={2011},
  publisher={Elsevier},
  note = {\url{https://doi.org/10.1016/j.apm.2011.03.046}}
}

@article{dukkanci2017routing,
  title={Routing and scheduling decisions in the hierarchical hub location problem},
  author={Dukkanci, Okan and Kara, Bahar Y},
  journal={Computers \& Operations Research},
  volume={85},
  pages={45--57},
  year={2017},
  publisher={Elsevier},
  note = {\url{https://doi.org/10.1016/j.cor.2017.03.013}}
}

@article{shang2020stochastic,
  title={Stochastic hierarchical multimodal hub location problem for cargo delivery systems: Formulation and algorithm},
  author={Shang, Xiaoting and Yang, Kai and Wang, Weiqiao and Wang, Weiping and Zhang, Haifeng and Celic, Selena},
  journal={IEEE Access},
  volume={8},
  pages={55076--55090},
  year={2020},
  publisher={IEEE},
  note = {\url{https://doi.org/10.1109/ACCESS.2020.2981669}}
}

@article{chou1990hierarchical,
  title={The hierarchical-hub model for airline networks},
  author={Chou, Yue-Hong},
  journal={Transportation Planning and Technology},
  volume={14},
  number={4},
  pages={243--258},
  year={1990},
  publisher={Taylor \& Francis},
  note = {\url{https://doi.org/10.1080/03081069008717429}}
}

@article{lin2004hierarchical,
  title={The hierarchical network design problem for time-definite express common carriers},
  author={Lin, Cheng-Chang and Chen, Sheu-Hua},
  journal={Transportation Research Part B: Methodological},
  volume={38},
  number={3},
  pages={271--283},
  year={2004},
  publisher={Elsevier},
  note = {\url{https://doi.org/10.1016/S0191-2615(03)00013-4}}
}

@article{santos2015branch,
  title={A branch-and-cut-and-price algorithm for the two-echelon capacitated vehicle routing problem},
  author={Santos, Fernando Afonso and Mateus, Geraldo Robson and da Cunha, Alexandre Salles},
  journal={Transportation Science},
  volume={49},
  number={2},
  pages={355--368},
  year={2015},
  publisher={INFORMS},
  note = {\url{https://doi.org/10.1287/trsc.2013.0500}}
}

@article{wu2023branch,
  title={A branch-and-price algorithm for two-echelon electric vehicle routing problem},
  author={Wu, Zhiguo and Zhang, Juliang},
  journal={Complex \& Intelligent Systems},
  volume={9},
  number={3},
  pages={2475--2490},
  year={2023},
  publisher={Springer},
  note = {\url{https://doi.org/10.1007/s40747-021-00403-z}}
}

@article{marques2022branch,
  title={A branch-cut-and-price approach for the single-trip and multi-trip two-echelon vehicle routing problem with time windows},
  author={Marques, Guillaume and Sadykov, Ruslan and Dupas, R{\'e}my and Deschamps, Jean-Christophe},
  journal={Transportation Science},
  volume={56},
  number={6},
  pages={1598--1617},
  year={2022},
  publisher={INFORMS},
  note = {\url{https://doi.org/10.1287/trsc.2022.1136}}
}

@article{mhamedi2022branch,
  title={A branch-price-and-cut algorithm for the two-echelon vehicle routing problem with time windows},
  author={Mhamedi, Tayeb and Andersson, Henrik and Cherkesly, Maril{\`e}ne and Desaulniers, Guy},
  journal={Transportation Science},
  volume={56},
  number={1},
  pages={245--264},
  year={2022},
  publisher={Informs},
  note = {\url{https://doi.org/10.1287/trsc.2021.1092}}
}

@article{dellaert2021multi,
  title={A multi-commodity two-Echelon capacitated vehicle routing problem with time windows: Model formulations and solution approach},
  author={Dellaert, Nico and Van Woensel, Tom and Crainic, Teodor Gabriel and Saridarq, Fardin Dashty},
  journal={Computers \& Operations Research},
  volume={127},
  pages={105154},
  year={2021},
  publisher={Elsevier},
  note = {\url{https://doi.org/10.1016/j.cor.2020.105154}}
}

@article{baldacci2013exact,
  title={An exact algorithm for the two-echelon capacitated vehicle routing problem},
  author={Baldacci, Roberto and Mingozzi, Aristide and Roberti, Roberto and Calvo, Roberto Wolfler},
  journal={Operations Research},
  volume={61},
  number={2},
  pages={298--314},
  year={2013},
  publisher={INFORMS},
  note = {\url{https://doi.org/10.1287/opre.1120.1153}}
}

@article{santos2013branch,
  title={Branch-and-price algorithms for the two-echelon capacitated vehicle routing problem},
  author={Santos, Fernando Afonso and da Cunha, Alexandre Salles and Mateus, Geraldo Robson},
  journal={Optimization Letters},
  volume={7},
  number={7},
  pages={1537--1547},
  year={2013},
  publisher={Springer},
  note = {\url{https://doi.org/10.1007/s11590-012-0568-3}}
}

@article{dellaert2019branch,
  title={Branch-and-price-based algorithms for the two-echelon vehicle routing problem with time windows},
  author={Dellaert, Nico and Dashty Saridarq, Fardin and Van Woensel, Tom and Crainic, Teodor Gabriel},
  journal={Transportation Science},
  volume={53},
  number={2},
  pages={463--479},
  year={2019},
  publisher={INFORMS},
  note = {\url{https://doi.org/10.1287/trsc.2018.0844}}
}

@article{thomadsen2005hierarchical,
  title={Hierarchical ring network design using branch-and-price},
  author={Thomadsen, Tommy and Stidsen, Thomas},
  journal={Telecommunication Systems},
  volume={29},
  number={1},
  pages={61--76},
  year={2005},
  publisher={Springer},
  note = {\url{https://doi.org/10.1007/s11235-005-6631-y}}
}

@article{wang2018two,
  title={Two-echelon location-routing optimization with time windows based on customer clustering},
  author={Wang, Yong and Assogba, Kevin and Liu, Yong and Ma, Xiaolei and Xu, Maozeng and Wang, Yinhai},
  journal={Expert Systems with Applications},
  volume={104},
  pages={244--260},
  year={2018},
  publisher={Elsevier},
  note = {\url{https://doi.org/10.1016/j.eswa.2018.03.018}}
}

@article{pinto2020network,
  title={A network design model for a meal delivery service using drones},
  author={Pinto, Roberto and Zambetti, Michela and Lagorio, Alexandra and Pirola, Fabiana},
  journal={International Journal of Logistics Research and Applications},
  volume={23},
  number={4},
  pages={354--374},
  year={2020},
  publisher={Taylor \& Francis},
  note = {\url{https://doi.org/10.1080/13675567.2019.1696290}}
}

@article{jiu2024benders,
  title={Benders decomposition for robust distribution network design and operations in online retailing},
  author={Jiu, Song and Wang, Dan and Ma, Zujun},
  journal={European Journal of Operational Research},
  volume={315},
  number={3},
  pages={1069--1082},
  year={2024},
  publisher={Elsevier},
  note = {\url{https://doi.org/10.1016/j.ejor.2024.01.046}}
}

@article{zou2024delivery,
  title={Delivery network design of a locker-drone delivery system},
  author={Zou, Bipan and Wu, Siqing and Gong, Yeming and Yuan, Zhe and Shi, Yuqian},
  journal={International Journal of Production Research},
  volume={62},
  number={11},
  pages={4097--4121},
  year={2024},
  publisher={Taylor \& Francis},
  note = {\url{https://doi.org/10.1080/00207543.2023.2254402}}
}

@article{pinto2022point,
  title={Point-to-point drone-based delivery network design with intermediate charging stations},
  author={Pinto, Roberto and Lagorio, Alexandra},
  journal={Transportation Research Part C: Emerging Technologies},
  volume={135},
  pages={103506},
  year={2022},
  publisher={Elsevier},
  note = {\url{https://doi.org/10.1016/j.trc.2021.103506}}
}

@article{hu2024drone,
  title={Drone-based instant delivery hub-and-spoke network optimization},
  author={Hu, Zhi-Hua and Huang, Yan-Ling and Li, Yao-Na and Bao, Xiao-Qiong},
  journal={Drones},
  volume={8},
  number={6},
  pages={247},
  year={2024},
  publisher={MDPI},
  note = {\url{https://doi.org/10.3390/drones8060247}}
}

@article{gao2022optimizing,
  title={Optimizing the hub-and-spoke network with drone-based traveling salesman problem},
  author={Gao, Chao-Feng and Hu, Zhi-Hua and Wang, Yao-Zong},
  journal={Drones},
  volume={7},
  number={1},
  pages={6},
  year={2022},
  publisher={MDPI},
  note = {\url{https://doi.org/10.3390/drones7010006}}
}

@article{enayati2023multimodal,
  title={Multimodal vaccine distribution network design with drones},
  author={Enayati, Shakiba and Li, Haitao and Campbell, James F and Pan, Deng},
  journal={Transportation Science},
  volume={57},
  number={4},
  pages={1069--1095},
  year={2023},
  publisher={INFORMS},
  note = {\url{https://doi.org/10.1287/trsc.2023.1205}}
}

@article{kar2024optimal,
  title={Optimal multimodal multi-echelon vaccine distribution network design for low and medium-income countries with manufacturing infrastructure during healthcare emergencies},
  author={Kar, Biswajit and Jenamani, Mamata},
  journal={International Journal of Production Economics},
  volume={273},
  pages={109282},
  year={2024},
  publisher={Elsevier},
  note = {\url{https://doi.org/10.1016/j.ijpe.2024.109282}}
}

@article{gentili2022locating,
  title={Locating platforms and scheduling a fleet of drones for emergency delivery of perishable items},
  author={Gentili, Monica and Mirchandani, Pitu B and Agnetis, Alessandro and Ghelichi, Zabih},
  journal={Computers \& Industrial Engineering},
  volume={168},
  pages={108057},
  year={2022},
  publisher={Elsevier},
  note = {\url{https://doi.org/10.1016/j.cie.2022.108057}}
}

@article{ghelichi2021logistics,
  title={Logistics for a fleet of drones for medical item delivery: A case study for {Louisville}, {KY}},
  author={Ghelichi, Zabih and Gentili, Monica and Mirchandani, Pitu B},
  journal={Computers \& Operations Research},
  volume={135},
  pages={105443},
  year={2021},
  publisher={Elsevier},
  note = {\url{https://doi.org/10.1016/j.cor.2021.105443}}
}

@article{kim2017drone,
  title={Drone-aided healthcare services for patients with chronic diseases in rural areas},
  author={Kim, Seon Jin and Lim, Gino J and Cho, Jaeyoung and C{\^o}t{\'e}, Murray J},
  journal={Journal of Intelligent \& Robotic Systems},
  volume={88},
  number={1},
  pages={163--180},
  year={2017},
  publisher={Springer},
  note = {\url{https://doi.org/10.1007/s10846-017-0548-z}}
}

@article{rabta2018drone,
  title={A drone fleet model for last-mile distribution in disaster relief operations},
  author={Rabta, Boualem and Wankm{\"u}ller, Christian and Reiner, Gerald},
  journal={International Journal of Disaster Risk Reduction},
  volume={28},
  pages={107--112},
  year={2018},
  publisher={Elsevier},
  note = {\url{https://doi.org/10.1016/j.ijdrr.2018.02.020}}
}

@article{wen2016multi,
  title={Multi-objective algorithm for blood supply via unmanned aerial vehicles to the wounded in an emergency situation},
  author={Wen, Tingxi and Zhang, Zhongnan and Wong, Kelvin KL},
  journal={Plos One},
  volume={11},
  number={5},
  pages={e0155176},
  year={2016},
  publisher={Public Library of Science San Francisco, CA USA},
  note = {\url{https://doi.org/10.1371/journal.pone.0155176}}
}

@article{hemmelmayr2012adaptive,
  title={An adaptive large neighborhood search heuristic for two-echelon vehicle routing problems arising in city logistics},
  author={Hemmelmayr, Vera C and Cordeau, Jean-Fran{\c{c}}ois and Crainic, Teodor Gabriel},
  journal={Computers \& Operations Research},
  volume={39},
  number={12},
  pages={3215--3228},
  year={2012},
  publisher={Elsevier},
  note = {\url{https://doi.org/10.1016/j.cor.2012.04.007}}
}

@article{ulmer2018same,
  title={Same-day delivery with heterogeneous fleets of drones and vehicles},
  author={Ulmer, Marlin W and Thomas, Barrett W},
  journal={Networks},
  volume={72},
  number={4},
  pages={475--505},
  year={2018},
  publisher={Wiley Online Library},
  note = {\url{https://doi.org/10.1002/net.21855}}
}

@article{cokyasar2021designing,
  title={Designing a drone delivery network with automated battery swapping machines},
  author={Cokyasar, Taner and Dong, Wenquan and Jin, Mingzhou and Verbas, {\.I}smail {\"O}mer},
  journal={Computers \& Operations Research},
  volume={129},
  pages={105177},
  year={2021},
  publisher={Elsevier},
  note = {\url{https://doi.org/10.1016/j.cor.2020.105177}}
}

@article{amirsahami2023hierarchical,
  title={A hierarchical model for strategic and operational planning in blood transportation with drones},
  author={Amirsahami, Amirali and Barzinpour, Farnaz and Pishvaee, Mir Saman},
  journal={Plos One},
  volume={18},
  number={9},
  pages={e0291352},
  year={2023},
  publisher={Public Library of Science San Francisco, CA USA},
  note = {\url{https://doi.org/10.1371/journal.pone.0291352}}
}

@article{palmer2024unique,
  title={Unique Health Care Delivery Considerations in Rural {America}},
  author={Palmer, Kaitlyn and Cochran, Jill and McGinley, Marisa},
  journal={International Journal of MS Care},
  volume={27},
  number={Theme},
  pages={T2--T6},
  year={2024},
  publisher={The Consortium of Multiple Sclerosis Centers},
  note = {\url{https://doi.org/10.7224/1537-2073.2024-081}}
}

@article{koshta2022evaluating,
  title={Evaluating barriers to the adoption of delivery drones in rural healthcare supply chains: Preparing the healthcare system for the future},
  author={Koshta, Nitin and Devi, Yashoda and Chauhan, Chetna},
  journal={IEEE Transactions on Engineering Management},
  volume={71},
  pages={13096--13108},
  year={2022},
  publisher={IEEE},
  note = {\url{https://doi.org/10.1109/TEM.2022.3210121}}
}

@book{national2022building,
  title={Building resilience into the nation's medical product supply chains},
  author={{National Academies of Sciences, Engineering, and Medicine}},
  year={2022},
  publisher = {National Academies Press},
  note = {\url{https://doi.org/10.17226/26420}}
}

@misc{AmericaBlood2022,
  author       = {{America's Blood Centers}},
  title        = {Prioritize Blood Donation as a National Imperative},
  howpublished = {\url{https://americasblood.org/wp-content/uploads/2022/09/Prioritize-Blood-Donation-as-a-National-Imperative.pdf}},
  year         = {2022},
  note         = {Accessed: November 20, 2025},
}

@misc{FDA2025EmergencyDevices,
  author       = {{U.S. Food and Drug Administration (FDA)}},
  title        = {Emergency Preparedness and Medical Devices: Supply Chain Recommendations for Health Care Providers and Manufacturers},
  howpublished = {\url{https://www.fda.gov/medical-devices/emergency-situations-medical-devices/emergency-preparedness-and-medical-devices-supply-chain-recommendations-health-care-providers-device}},
  year         = {2025},
  note         = {Accessed: November 20, 2025},
}

@article{varela2019transportation,
  title={Transportation barriers to access health care for surgical conditions in {Malawi} a cross sectional nationwide household survey},
  author={Varela, Carlos and Young, Sven and Mkandawire, Nyengo and Groen, Reinou S and Banza, Leonard and Viste, Asgaut},
  journal={BMC Public Health},
  volume={19},
  number={1},
  pages={264},
  year={2019},
  publisher={Springer},
  note = {\url{https://doi.org/10.1186/s12889-019-6577-8}}
}

@article{eisner2024transportation,
  title={Transportation and equipment needs for emergency medical services development in low- and middle-income countries},
  author={Eisner, Zachary J and Smith, Nathanael J and Wylie, Craig},
  journal={Surgery},
  volume={176},
  number={2},
  pages={521--523},
  year={2024},
  publisher={Elsevier},
  note = {\url{https://doi.org/10.1016/j.surg.2024.03.050}}
}

@article{kellermann2020drones,
  title={Drones for parcel and passenger transportation: A literature review},
  author={Kellermann, Robin and Biehle, Tobias and Fischer, Liliann},
  journal={Transportation Research Interdisciplinary Perspectives},
  volume={4},
  pages={100088},
  year={2020},
  publisher={Elsevier},
  note = {https://doi.org/10.1016/j.trip.2019.100088}
}

@article{mohsan2023unmanned,
  title={Unmanned aerial vehicles ({UAV}s): Practical aspects, applications, open challenges, security issues, and future trends},
  author={Mohsan, Syed Agha Hassnain and Othman, Nawaf Qasem Hamood and Li, Yanlong and Alsharif, Mohammed H and Khan, Muhammad Asghar},
  journal={Intelligent Service Robotics},
  volume={16},
  number={1},
  pages={109--137},
  year={2023},
  publisher={Springer},
  note = {\url{https://doi.org/10.1007/s11370-022-00452-4}}
}

@article{jairoun2025evolution,
  title={The evolution of medication delivery via drones: Revolutionizing healthcare logistics},
  author={Jairoun, Ammar A and Al-Hemyari, Sabaa S and Shahwan, Moyad and Al-Ghananeem, Abeer M and El-Dahiyat, Faris and Al-Salmi, Sondos and Babar, Zaheer-Ud-Din},
  journal={Journal of Pharmaceutical Policy and Practice},
  volume={18},
  number={1},
  pages={2519137},
  year={2025},
  publisher={Taylor \& Francis},
  note = {\url{https://doi.org/10.1080/20523211.2025.2519137}}
}

@misc{wingcopter2020vanuatu,
  author       = {Wingcopter},
  title        = {Vaccine delivery service in {Vanuatu}},
  year         = {2020},
  howpublished = {\url{https://wingcopter.com/project/story-vanuatu}},
  note         = {Accessed: August 20, 2024}
}

@article{ackerman2019blood,
  title={The blood is here: Zipline's medical delivery drones are changing the game in {Rwanda}},
  author={Ackerman, Evan and Koziol, Michael},
  journal={IEEE Spectrum},
  volume={56},
  number={5},
  pages={24--31},
  year={2019},
  publisher={IEEE},
  note = {\url{https://doi.org/10.1109/MSPEC.2019.8701196}}
}

@misc{flyzipline_website,
  author       = {Zipline},
  title        = {FlyZipline Official Website},
  year = {2025},
  howpublished = {\url{https://www.flyzipline.com/}},
  note         = {Accessed: March 21, 2025}
}

@misc{Simons_Rwanda,
    author = {Alexander Onukwue},
    title = {Zipline needs {Nigeria} to support its drone delivery medical service---as {Rwanda} and {Ghana} did},
    year = {2022},
    howpublished = {\url{https://qz.com/africa/2086752/what-zipline-brings-to-nigeria-after-5-years-in-rwanda-and-ghana}},
    note = {Accessed: November 26, 2025}
}

@misc{pwc2024drone,
  title        = {Drone Deliveries: Taking Retail and Logistics to New Heights},
  author       = {PwC Drone Powered Solutions},
  year         = 2024,
  howpublished = {\url{https://cee.pwc.com/drone-powered-solutions/drone-deliveries-taking-retail-and-logistics-to-new-heights.html}},
  note         = {Accessed: March 15, 2025}
}

@misc{karsten2018drones,
  author = {Jack Karsten and Darrell M. West},
  title = {How emergency responders are using drones to save lives},
  year = {2018},
  howpublished = {\url{https://www.brookings.edu/articles/how-emergency-responders-are-using-drones-to-save-lives/}},
  note = {Accessed: October 14, 2025}
}

@article{hosseini2026branch,
title = {A branch-and-cut algorithm for routing a heterogeneous drone-ground vehicle fleet to deliver time-sensitive products},
journal = {Transportation Research Part C: Emerging Technologies},
author = {Sayed Hamid {Hosseini Dolatabadi} and Tanveer Hossain Bhuiyan and Waquar Kaleem and Anirudh Subramanyam and Taner Cokyasar},
volume = {183},
pages = {105454},
year = {2026},
issn = {0968-090X},
doi = {https://doi.org/10.1016/j.trc.2025.105454},
url = {https://www.sciencedirect.com/science/article/pii/S0968090X25004589},
note = {\url{https://doi.org/10.1016/j.trc.2025.105454}}
}

@article{murray2015flying,
  title={The flying sidekick traveling salesman problem: Optimization of drone-assisted parcel delivery},
  author={Murray, Chase C and Chu, Amanda G},
  journal={Transportation Research Part C: Emerging Technologies},
  volume={54},
  pages={86--109},
  year={2015},
  publisher={Elsevier},
  note = {\url{https://doi.org/10.1016/j.trc.2015.03.005}}
}

@article{li2021ground,
  title={Ground-vehicle and unmanned-aerial-vehicle routing problems from two-echelon scheme perspective: A review},
  author={Li, Hongqi and Chen, Jun and Wang, Feilong and Bai, Ming},
  journal={European Journal of Operational Research},
  volume={294},
  number={3},
  pages={1078--1095},
  year={2021},
  publisher={Elsevier},
  note = {\url{https://doi.org/10.1016/j.ejor.2021.02.022}}
}

@article{hong2018range,
  title={A range-restricted recharging station coverage model for drone delivery service planning},
  author={Hong, Insu and Kuby, Michael and Murray, Alan T},
  journal={Transportation Research Part C: Emerging Technologies},
  volume={90},
  pages={198--212},
  year={2018},
  publisher={Elsevier},
  note = {\url{https://doi.org/10.1016/j.trc.2018.02.017}}
}

@article{cokyasar2021optimization,
  title={Optimization of battery swapping infrastructure for e-commerce drone delivery},
  author={Cokyasar, Taner},
  journal={Computer Communications},
  volume={168},
  pages={146--154},
  year={2021},
  publisher={Elsevier},
  note = {\url{https://doi.org/10.1016/j.comcom.2020.12.015}}
}

@article{mirzapour2024optimizing,
  title={Optimizing last-mile delivery services: A robust truck-drone cooperation model and hybrid metaheuristic algorithm},
  author={Mirzapour Al-e-Hashem, Seyed Mohammad Javad and Hejazi, Taha-Hossein and Haghverdizadeh, Ghazal and Shidpour, Mohsen},
  journal={Annals of Operations Research},
  pages={1--31},
  year={2024},
  publisher={Springer},
  note = {\url{https://doi.org/10.1007/s10479-024-06164-5}}
}

@article{moshref2021comparative,
  title={A comparative analysis of synchronized truck-and-drone delivery models},
  author={Moshref-Javadi, Mohammad and Hemmati, Ahmad and Winkenbach, Matthias},
  journal={Computers \& Industrial Engineering},
  volume={162},
  pages={107648},
  year={2021},
  publisher={Elsevier},
  note = {\url{https://doi.org/10.1016/j.cie.2021.107648}}
}

@article{hassan2022charging,
  title={Charging station distribution optimization using drone fleet in a disaster},
  author={Hassan, Zohaib and Ali Shah, Syed Irtiza and Sarwar Rana, Ahsan},
  journal={Journal of Robotics},
  volume={2022},
  number={1},
  pages={7329346},
  year={2022},
  publisher={Wiley Online Library},
  note = {\url{https://doi.org/10.1155/2022/7329346}}
}

@article{glick2022case,
  title={Case study of drone delivery reliability for time-sensitive medical supplies with stochastic demand and meteorological conditions},
  author={Glick, Travis B and Figliozzi, Miguel A and Unnikrishnan, Avinash},
  journal={Transportation Research Record},
  volume={2676},
  number={1},
  pages={242--255},
  year={2022},
  publisher={SAGE Publications Sage CA: Los Angeles, CA},
  note = {\url{https://doi.org/10.1177/03611981211036685}}
}

@article{kaparis2010separation,
  title={Separation algorithms for 0-1 knapsack polytopes},
  author={Kaparis, Konstantinos and Letchford, Adam N},
  journal={Mathematical Programming},
  volume={124},
  number={1},
  pages={69--91},
  year={2010},
  publisher={Springer},
  note = {\url{https://doi.org/10.1007/s10107-010-0359-5}}
}

@manual{gurobi2023,
  title        = {Gurobi Optimizer Reference Manual},
  author       = {{Gurobi}},
  year         = {2025},
  note         = {\url{https://docs.gurobi.com/projects/optimizer/en/current/index.html}},
}

@misc{unisHS,
  author = {{UNIS}},
  title = {Transportation Modeling vs Hub and Spoke Network: A Comprehensive Comparison},
  year = {2025},
  howpublished = {\url{https://www.unisco.com/comparison/hub-and-spoke-network-vs-transportation-modeling}},
  note = {Accessed: December 29, 2025}
}

@article{ogazon2025designing,
  title={Designing hub-based regional transportation networks with service level constraints},
  author={Ogaz{\'o}n, Esteban and Anaya-Arenas, Ana Mar{\'\i}a and Ruiz, Angel},
  journal={Transportation Research Part E: Logistics and Transportation Review},
  volume={195},
  pages={103991},
  year={2025},
  publisher={Elsevier},
  note = {\url{https://doi.org/10.1016/j.tre.2025.103991}}
}

@misc{Aegis2022Blood,
  author = {Aegis Scientific},
  title = {Blood Banks: Storage Requirements},
  year = {2022},
  howpublished = {\url{https://www.aegisfridge.com/2022/10/03/blood-banks-storage-requirements/}},
  note = {Accessed: January 20, 2026}
}

@misc{Fuel2011AMB,
  author = {Jeremy J. Hess and 
Lawrence A. Greenberg},
  title = {Fuel Use in a Large, Dynamically Deployed Emergency Medical Services System},
  year = {2011},
  howpublished = {\url{https://www.cambridge.org/core/journals/prehospital-and-disaster-medicine/article/fuel-use-in-a-large-dynamically-deployed-emergency-medical-services-system/89B4B55BF605C9B594631E8C90CF3030}},
  note = {Accessed: January 20, 2026}
}

@misc{Blood2025Temp,
  author = {O.J. Bradshaw},
  title = {Ensuring Proper Storage and Temperatures for the Blood Bank},
  year = {2025},
  howpublished = {\url{https://achc.org/ensuring-proper-storage-and-temperatures-for-the-blood-bank/}},
  note = {Accessed: January 20, 2026}
}

@article{bhuiyan2025optimization,
  title={Optimization of a Mixed Fleet of Aerial Drones for Medical Supplies: A Case Study of Blood Delivery Logistics},
  author={Bhuiyan, Tanveer Hossain and Dolatabadi, Sayed Hamid Hosseini and Uddin, Md Jalal and Walker, Victor},
  journal={Transportation Research Record},
  pages={03611981251394973},
  year={2025},
  publisher={SAGE Publications Sage CA: Los Angeles, CA},
  note = {\url{https://doi.org/10.1177/03611981251394973}}
}

\appendix 
\newpage
\setcounter{page}{1}
\renewcommand{\thefigure}{\Alph{section}.\arabic{figure}}
\renewcommand{\thetable}{\Alph{section}.\arabic{table}}
\renewcommand{\theequation}{\Alph{section}.\arabic{equation}}
\renewcommand{\thepage}{A-\arabic{page}}
\renewcommand{\thealgorithm}{\Alph{section}.\arabic{algorithm}}
\newpage
\section*{Appendices}

\setcounter{equation}{0}
\setcounter{figure}{0}
\setcounter{table}{0}
\section{Endogenous Battery Swapping Constraints}
\label{Appendix_Endogenous}

Constraints \eqref{eq:End-CL-1} - \eqref{eq:End-CL-7} and \eqref{eq:End-AMB-1} - \eqref{eq:End-AMB-7} model the endogenous battery swapping of SDs while serving the CLs and the AMBs, respectively. Here, $\zeta_{ih/lh}^t$ and $\zeta_{ih/lh}^{' t}$ are continuous variables representing the actual and potential remaining energy, respectively, in the battery of an SD of type $t$ upon returning to LH $h$ from CL $i$/AL $l$. Furthermore, $A_u^t$ and $A_\ell^t$ are the upper and lower bounds of $(\zeta_{ih}^{' t} - B_{min}^t)$, respectively, and $U^t$ is an upper bound on $(B_{max}^t - \zeta_{ih}^t)$.
We refer the readers to \citet{bhuiyan2024aerial} for a detailed explanation of the endogenous battery swapping constraints of drones.

\begin{align}
    & \zeta_{jh}^{' t}\leq B_{max}^{t}- \rho_{jh}^{t}+M(1-\theta_{hj}^{t}), \quad\forall h\in\mathcal H,\, j\in\mathcal{I},\, t\in\mathcal{T} \label{eq:End-CL-1}\\
    & \zeta_{jh}^{' t}\leq \zeta_{ih}^{t}-\rho_{jh}^{t}+M(1-\theta_{ij}^{t}), \quad\forall h\in\mathcal H,\, (i,j)\in\mathcal A_h,\, t\in\mathcal{T} \label{eq:End-CL-2}\\
    & \zeta_{jh}^{' t}-B_{min}^{t}\geq A_{\ell}^{t} \cdot s_{ih}^{t}+M(\theta_{ij}^{t}-1), \quad\forall h\in\mathcal H,\, (i,j)\in\mathcal A_h,\, t\in\mathcal{T} \label{eq:End-CL-3}\\
    & \zeta_{jh}^{' t}-B_{min}^{t}\leq A_{u}^{t}(1-s_{ih}^{t})+M(1-\theta_{ij}^{t}), \quad\forall h\in\mathcal H,\, (i,j)\in\mathcal A_h,\, t\in\mathcal{T} \label{eq:End-CL-4}\\
    & \zeta_{jh}^{t}\leq B_{max}^{t}-\rho_{jh}^{t}+U^{t}(1-s_{ih}^{t})+M(1-\theta_{ij}^{t}), \quad\forall h\in\mathcal H,\, (i,j)\in\mathcal A_h,\, t\in\mathcal{T} \label{eq:End-CL-5}\\
    & \zeta_{jh}^{t}\leq \zeta_{ih}^{t}-\rho_{jh}^{t}+U^{t}s_{ih}^{t}+M(1-\theta_{ij}^{t}), \quad\forall h\in\mathcal H,\, (i,j)\in\mathcal A_h,\, t\in\mathcal{T} \label{eq:End-CL-6}\\
    & \zeta_{jh}^{t}\leq B_{max}^{t}-\rho_{jh}^{t}+U^{t}s_{hh}^{t}+M(1-\theta_{hj}^{t}), \quad\forall h\in\mathcal H,\, j\in\mathcal{I},\, t\in\mathcal{T} \label{eq:End-CL-7} \\
    & \zeta_{oh}^{' t}\leq B_{max}^{t}-\rho_{oh}^{t}+M(1-\theta_{ho}^{t}), \quad\forall h\in\mathcal H,\, o\in\mathcal{L},\, t\in\mathcal{T} \label{eq:End-AMB-1}\\
    & \zeta_{oh}^{' t}\leq \zeta_{lh}^{t}-\rho_{oh}^{t}+M(1-\theta_{lo}^{t}), \quad\forall h\in\mathcal H,\, (l,o)\in\mathcal B_h,\, t\in\mathcal{T} \label{eq:End-AMB-2}\\
    & \zeta_{oh}^{' t}-B_{min}^{t}\geq A_{\ell}^{t} \cdot s_{lh}^{t}+M(\theta_{lo}^{t}-1), \quad\forall h\in\mathcal H,\, (l,o)\in\mathcal B_h,\, t\in\mathcal{T} \label{eq:End-AMB-3}\\
    & \zeta_{oh}^{' t}-B_{min}^{t}\leq A_{u}^{t}(1-s_{lh}^{t})+M(1-\theta_{lo}^{t}), \quad\forall h\in\mathcal H,\, (l,o)\in\mathcal B_h,\, t\in\mathcal{T} \label{eq:End-AMB-4}\\
    & \zeta_{oh}^{t}\leq B_{max}^{t}-\rho_{oh}^{t}+U^{t}(1-s_{lh}^{t})+M(1-\theta_{lo}^{t}), \quad\forall h\in\mathcal H,\, (l,o)\in\mathcal B_h,\, t\in\mathcal{T} \label{eq:End-AMB-5}\\
    & \zeta_{oh}^{t}\leq \zeta_{lh}^{t}-\rho_{oh}^{t}+U^{t}s_{lh}^{t}+M(1-\theta_{lo}^{t}), \quad\forall h\in\mathcal H,\, (l,o)\in\mathcal B_h,\, t\in\mathcal{T} \label{eq:End-AMB-6}\\
    & \zeta_{oh}^{t}\leq B_{max}^{t}-\rho_{oh}^{t}+U^{t}s_{hh}^{t}+M(1-\theta_{ho}^{t}), \quad\forall h\in\mathcal H,\, o\in\mathcal{L},\, t\in\mathcal{T} \label{eq:End-AMB-7} 
\end{align}

\setcounter{equation}{0}
\setcounter{figure}{0}
\setcounter{table}{0}
\section{Proofs}
\label{Appendix_Proof}

\renewcommand{\theprop}{\ref{prop:NP-hard}}

\begin{prop}
    The \textit{HDDNM} problem is NP-hard.
\end{prop}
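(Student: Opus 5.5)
We will prove NP-hardness by a polynomial-time reduction from a known NP-hard problem to a restricted special case of \textit{HDDNM}. Since every special case of \textit{HDDNM} is itself an instance of \textit{HDDNM}, hardness of the restriction suffices. The most direct source problem is the capacitated vehicle routing problem (CVRP), or equivalently the bin packing problem. We commit to \textbf{bin packing} because the reduction stays purely combinatorial and avoids Euclidean subtleties. We then note that the same construction with nonzero inter-CL travel times embeds the CVRP/TSP.

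The construction is as follows. Given a bin packing instance with item sizes $a_1,\dots,a_n$, bin capacity $C$ and target $K$, build an \textit{HDDNM} instance with these features:
\begin{itemize}
\item \textbf{Network.} There is a single LH ($|\mathcal H|=1$), no AMBs or ALs ($\mathcal M=\mathcal L=\emptyset$, so $\textbf{C}^\text{alloc}=0$), and no SD types ($\mathcal T_h=\emptyset$).
\item \textbf{Clinics and packages.} There are $n$ CLs $i$ with $P_i=a_i$, all co-located so that $T_{ij}^{MD}=0$ and $E^{0,MD}_{ij}=E^{1,MD}_{ij}=0$ for $i\neq j$.
\item \textbf{MD parameters.} $P_{max}^{MD}=C$, $P_N^{MD}=n$, and the MD battery is large enough never to bind.
\item \textbf{Echelon 1.} It is made trivial: LD capacity and battery are huge, $R_p=0$, and the LD travel and loading times are zero. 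One LD trip then carries everything to the LH at time $0$, at fixed cost $F^{LD}+C^{LD}_{bat}$.
\item \textbf{Timing and costs.} Set $T_{hi}^{MD}=T_{ih'}^{MD}=1$, $T_L^{MD}=T_U^{MD}=T_{bat}^{MD}=0$, and $L_i=1$ for all $i$. Then every MD can perform at most one trip, because a second trip would depart at time $\geq 2$. Take $C^E=0$, $F^{MD}=1$ and $C^{MD}_{bat}=0$.
\end{itemize}
Under these settings, an optimal solution uses MDs whose single trips partition the CLs into groups of total weight at most $C$. The objective is $F^{LD}+C^{LD}_{bat}$ plus the number of MDs. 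Hence the bin packing instance is a yes-instance iff the optimum is at most $F^{LD}+C^{LD}_{bat}+K$.

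The proof then has three steps.
\begin{enumerate}
\item Check that the instance size is polynomial in $n$. This holds because the numbers of drones and trips can be set to $n$.
\item Show that any packing into $K$ bins yields a feasible \textit{HDDNM} solution of the stated cost. Assign bin $k$ to MD $k$'s first trip, and verify constraints \eqref{E2-Const21}, \eqref{E2-Const28}--\eqref{E2-Const37} and the payload and energy constraints.
\item Show the converse. From any feasible solution of cost at most the threshold, read off the sets $\{i: y_{ih}^{v1}=1\}$. Establishing that each MD makes only one trip uses the due times, and the result is a packing into at most $K$ bins.
\end{enumerate}

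The main obstacle is step 3, in particular making sure the time-window argument truly prevents multi-trip reuse of an MD. Constraints \eqref{E2-Const36} and \eqref{E2-Const32} force the second departure to occur at or after time $2$, so its deliveries fall at $\geq 3>1$. We also need to confirm that zero travel times between co-located CLs do not break the MTZ-type timing constraints \eqref{E2-Const30}; they do not, since these constraints only impose $\delta_j\geq\delta_i$ and are satisfied with equal times. If degenerate zero times are undesirable, we would instead use small positive inter-CL times $\epsilon$ with $L_i=1+n\epsilon$ and round-trip time $2$, which preserves the one-trip property.
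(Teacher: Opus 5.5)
Your reduction is correct, and it takes a genuinely different route from the paper. The paper reduces from the TSP. It drops the hierarchy, keeps only the CD and the CLs, and lets a single LD make one circular trip with energy proportional to distance. Capacity, battery, release times and due times are all slack, so an optimal \textit{HDDNM} solution is an optimal tour.

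You instead keep the hierarchy (one LH, a trivialized echelon~1), make routing trivial by co-locating the CLs, and extract hardness from MD fleet sizing under the PWCC, using $L_i=1$ to forbid trip reuse. The paper's version shows that routing alone is hard even when every capacity and time constraint is inactive. Yours buys three things:
\begin{itemize}
\item hardness persists even with trivial geometry;
\item your instance is literally a special case of the problem, whereas the paper's LD serving CLs directly is not one without further reinterpretation;
\item it is a two-directional decision reduction with small integer data, so it gives strong NP-hardness from bin packing with no Euclidean subtleties.
\end{itemize}

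Your single-trip argument is sound. If an earlier trip $g''$ of MD $v$ is performed, \eqref{E2-Const32} gives $bt_h^{vg''}\geq 1$. By \eqref{E2-Const36} and \eqref{E2-Const28}, every later trip of $v$ then delivers at time $\geq 2>1$, and unperformed earlier trips impose nothing. Zero inter-CL times do let \eqref{E2-Const30} admit subtours. This is harmless, because your converse only reads off $\{i: y_{ih}^{vg'}=1\}$ and uses \eqref{E2-Const21} and \eqref{E2-Const9_0}.

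One caveat applies if you check step~2 constraint by constraint against \textit{BaM}. As written, \eqref{E2-Const5} and \eqref{E2-Const7} both set the number of arcs leaving the hub equal to $\mu_h^{vg'}$. So every performed MD trip would have to leave toward a CL \emph{and} toward an AL. With $\mathcal L=\emptyset$ this forces $\mu_h^{vg'}=0$; with $\mathcal M=\emptyset$ the same follows through \eqref{LocAlloc-Const1}, \eqref{E2-Const9} and \eqref{E2-Const8}. Your instance is therefore infeasible in the literal model.

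That is a defect of the formulation, which contradicts the stated rule that a drone serves either CLs or AMBs. Your argument stands for the problem defined in Section~\ref{sec:Problem_Description}; just state that you reason at that level.

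If you want a gadget that survives the literal model, place the bin packing in echelon~1. Use one LH, $P_{max}^{LD}=C$, $F^{LD}=0$, $C_{bat}^{LD}=1$, $C^E=0$ and slack due times. Let one cost-free SD type with ample PWCC and battery serve the CLs. The objective is then exactly the number of LD trips, i.e., the number of bins, and no due-time argument is needed.
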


\begin{proof}
    Consider a directed network graph $G = (\mathcal V, \mathcal E)$, where $\mathcal V$ and $\mathcal E$ are the sets of nodes and edges, respectively. We construct a special case of our \textit{HDDNM} problem on $G$ as follows. We only consider the CD in layer 1 and the CLs in layer 3 of the network, ignoring the hierarchical design. Therefore, $\mathcal V = \{0 \} \cup \mathcal I $, and $\mathcal E = \{(0, i): i\in\mathcal I\} \cup \{(i, j) \in \mathcal I \times \mathcal I\} \cup \{(i, S): i\in\mathcal I\}$. Furthermore, the fleet comprises only a single LD, performing a single circular-delivery trip. The energy consumption of the LD on each edge $e\in\mathcal E$, denoted as $q_e$, is proportional to the distance, i.e., $q_e = k \cdot d_e$. Here, $d_e$ is the length of edge $e$, and $k$ is the proportional parameter. The battery capacity of the single LD is greater than the total energy consumption on all edges, i.e., $B_{max}^{LD} - B_{min}^{LD} \gg \sum_{e\in\mathcal E} (k \cdot d_e)$. Furthermore, the PWCC of the LD is greater than the total weight of all the packages, i.e., $P_{max}^{LD} \gg \sum_{i\in\mathcal I} P_i$. We also assume that all the packages are released at the beginning of the planning horizon (i.e., $R_i = 0, \forall i\in\mathcal I$). Additionally, no delivery due time is considered, i.e., each package can be delivered any time during the planning horizon. Therefore, to deliver all the packages, the goal is to visit all the CLs exactly once by a single LD, while minimizing the total distance traveled. If the \textit{HDDNM} problem is solved to optimality, its solution yields an optimal solution to the Euclidean Traveling Salesman Problem (TSP), where a vehicle visits each node $v\in \mathcal V$ in a complete graph $G$ defined with Euclidean distance while minimizing the total distance traveled. It is well-established that the TSP is NP-hard, which implies that the \textit{HDDNM} problem is also NP-hard.
\end{proof}

\renewcommand{\theprop}{\ref{prop:Return_Time}}

\begin{prop} 
    For any drone performing multiple trips, a feasible schedule in which the drone returns to either the CD (in echelon 1) or the origin LH (in echelon 2) immediately after serving all the locations in the sequence dominates an identical schedule with an idle time during the trip.
\end{prop}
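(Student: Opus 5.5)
We prove the claim by an exchange argument. Take any feasible solution of the \textit{BaM} in which some LD-trip $dg$ (or MD-trip $vg'$) contains idle time, i.e., $rt_0^{dg}$ is strictly larger than the earliest completion time $dt_0^{dg} + T_L^{LD} + \sum_{(h,k)\in\mathcal E, k\neq S}(T_{hk}^{LD}+T_U^{LD})x_{hk}^{dg} + \sum_{h}x_{hS}^{dg}T_{hS}^{LD}$. Alternatively, some arrival time $at_h^{dg}$ may exceed the bound forced by its predecessor in \eqref{eq:E1-Const8}--\eqref{eq:E1-Const9}. We build a modified solution that keeps every binary variable ($x$, $w$, $u$, $r$, $z$, $y$, $\mu$, $\gamma$, $\theta$, $s$) and every payload/energy variable ($pw$, $e$, $q$, $\rho$) unchanged, and alters only the timing variables of that trip.

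\emph{Step 1 (compress the trip).} Recompute the timing recursively along the route with all inequalities tight: $at_h^{dg}$ equals the right-hand side of \eqref{eq:E1-Const8}/\eqref{eq:E1-Const9} for the active arc, and $rt_0^{dg}$ equals the right-hand side of \eqref{eq:E1-Const10}. In echelon 2, do the same for $\delta$ along the MD route and for $bt_h^{vg'}$ via \eqref{E2-Const32}--\eqref{E2-Const33}. An induction along the (acyclic, MTZ-ordered) route shows each new time is at most the old one. By telescoping, the tight return time equals the closed-form expression in \eqref{eq:E1-Const10-New} (resp. \eqref{eq:E2-Const32-New}).

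\emph{Step 2 (feasibility of downstream constraints).} Check that every constraint in which these times appear is still satisfied.
\begin{itemize}
\item Lowering $rt_0^{dg}$ or $bt_h^{vg'}$ only relaxes \eqref{eq:E1-Const12} and \eqref{E2-Const36} for later trips.
\item Lowering $\delta_p$ preserves \eqref{E2-Const37}.
\item Lowering $at_h^{dg}$ requires updating $\pi_p$ through \eqref{eq:Int-Const2}. This earlier availability only relaxes \eqref{E2-Const24}, \eqref{E2-Const26}, \eqref{E2-Const34}, and \eqref{E2-Const35}, and it preserves \eqref{VI-1}--\eqref{VI-2}.
\end{itemize}
Since the constraints are monotone in the correct direction (times appear on the ``$\geq$'' side for successors), the modified solution stays feasible. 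Alternatively, one may leave $at$, $\pi$, and $\delta$ untouched and lower only the return time to the largest active lower bound. That simpler variant already suffices for the reformulation.

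\emph{Step 3 (dominance).} The objective \eqref{eq:obj} contains no timing variables, so the cost is unchanged. The set of feasible continuations, meaning the departure times of subsequent trips, weakly enlarges. The compact schedule is therefore at least as good and dominates, so imposing equality \eqref{eq:E1-Const10-New}/\eqref{eq:E2-Const32-New} loses no optimality.

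The main obstacle is Step 2's bookkeeping. The two-sided big-M coupling \eqref{eq:Int-Const2} forces $\pi_p = at_h^{dg}$, so changing an arrival time propagates into echelon 2. We would handle this by using the simpler variant first, adjusting only $rt_0^{dg}$ and $bt_h^{vg'}$, where no propagation arises. Idle time \emph{during} the trip is then absorbed because the equality expression is a valid lower bound whenever arrival times are tight. The full-compression version would be mentioned as a remark.
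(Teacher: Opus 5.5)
Your Steps 1--3 are correct and use the same exchange argument as the paper. The paper compares a trip returning at $T_1+T_P$ with the identical trip returning at $T_1+T_P+T_I$, and observes that a later return can only tighten the next departure bound $T_2\ge T_R$ (constraints \eqref{eq:E1-Const12} and \eqref{E2-Const36}). Your version is more careful in three ways. You also remove waiting at intermediate hubs. You propagate the earlier arrival times through the two-sided coupling \eqref{eq:Int-Const2} into echelon 2 and check that this only relaxes the downstream constraints. And you base the cost comparison on the fact that \eqref{eq:obj} contains no timing variables, so the two costs are equal and dominance concerns only the set of feasible continuations. The paper's bare assertion $C(x_1)\le C(x_2)$ leaves that last point implicit.

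What you should change is your closing plan to lead with the ``simpler variant'' and relegate full compression to a remark.

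\textbf{The shortcut does not prove the statement.} Lowering only $rt_0^{dg}$ (or $bt_h^{vg'}$) to its active bound, e.g.\ $at_h^{dg}+T_{hS}^{LD}$ for the last hub $h$, removes idle time after the last stop. It does not remove waiting at an intermediate hub, i.e.\ an active arc with $at_k^{dg}>at_h^{dg}+T_U^{LD}+T_{hk}^{LD}$. The return time therefore still exceeds the closed form in \eqref{eq:E1-Const10-New}, and the statement, which concerns idle time during the trip, is not established. Your sentence ``idle time during the trip is then absorbed because the equality expression is a valid lower bound whenever arrival times are tight'' assumes exactly the tightness that only your Step 1 provides.

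\textbf{The shortcut does not suffice for the reformulation either.} Going from \textit{BaM} to the reformulated model, you must set the return time to the closed form, not to the active bound. Going back is harder: the reformulated model no longer ties $rt_0^{dg}$ to the $at_h^{dg}$, so its solutions may have a return time earlier than $at_h^{dg}+T_{hS}^{LD}$. Turning such a solution into a \textit{BaM}-feasible schedule requires compressing the arrival times and $\pi_p$, as in your Steps 1--2.

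So keep full compression, with its propagation through \eqref{eq:Int-Const2}, as the main argument rather than a remark.
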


\begin{proof}
    Let $P = (v_1, v_2, ..., v_m)$ be a feasible circular delivery schedule (i.e., the sequence of visited locations) for a given drone-trip $dg$. The total completion time of serving all the locations in this sequence can be computed as $T_P = T_L^d + |P| \cdot T_U^d + \sum_{i=1}^{m-1} T_{v_i, v_{i+1}}^d $. Here, the total completion time includes: ($i$) the loading time of all the packages at the beginning of the trip, ($ii$) the total time to unload each package to the corresponding location, and ($iii$) the total travel time along the sequence. Moreover, consider $T_1$ and $T_I$ as the departure time of drone-trip $dg$ and the total idle time during the trip, respectively. 

    Let $F$ be the set of feasible solutions of the \textit{BaM}, and suppose there exist two feasible solutions $x_1, x_2 \in F$, where ($i$) in $x_1$, drone-trip $dg$ follows delivery schedule $P$ and returns at $T_R = T_1 + T_P$ (i.e., immediately after serving all the locations); and ($ii$) in $x_2$, drone-trip $dg$ follows the same delivery schedule $P$ and returns at $T_R = T_1 + T_P + T_I$ (i.e., waiting during the trip). The departure time of the drone for the next trip $g+1$, denoted as $T_2$, is later than the drone's return time from the previous trip $g$, i.e., $T_2 \geq T_R$ (represented by constraints \eqref{eq:E1-Const12} and \eqref{E2-Const36}). Therefore, further delay in $T_R$ results in postponing the subsequent trips to later times, which can make some subsequent trips infeasible due to violating the time window constraints. In contrast, as a drone can postpone the departure time for the next trip, returning earlier does not affect the feasibility of the subsequent trip with respect to the package release times. Due to these two reasons, it is evident that $C(x_1) \leq C(x_2)$, i.e., solution $x_2$ cannot provide a better solution than $x_1$. Therefore, we conclude that returning immediately after serving all the locations in the sequence dominates waiting during the trip.
\end{proof}

\renewcommand{\theprop}{\ref{prop:Symmetry-Breaking}}

\begin{prop}
    The \textit{BaM} model augmented with symmetry-breaking constraints \eqref{SB-1} - \eqref{SB-4} remains as a valid model, and its feasible set is a proper subset of that of \textit{BaM}.
\end{prop}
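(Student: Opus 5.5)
The argument has two parts. First, the augmented model is \emph{valid}: for every feasible solution of \textit{BaM} there is another feasible solution with the same objective value that satisfies \eqref{SB-1} - \eqref{SB-4}. Second, the feasible set strictly shrinks. For validity, take any feasible solution $x$ of \textit{BaM}, which may be optimal. I would transform it by relabeling, in two stages, and never change any physical routing or timing.

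\textbf{Stage 1 (drones).} The LDs in $\mathcal D$ are identical: same $F^{LD}$, $P_{max}^{LD}$, $B_{max}^{LD}$, travel times and energy parameters. The same holds for the MDs in $\mathcal V_h$ at each fixed LH $h$. So any permutation $\sigma$ of the drone indices maps a feasible solution to a feasible solution. Concretely, set $u^{\sigma(d)} := u^d$, and do the same for every variable carrying index $d$: $u^{dg}, r_h^{dg}, w_{ph}^{dg}, x_{hk}^{dg}, dt_0^{dg}, rt_0^{dg}, at_h^{dg}, pw_{hk}^{dg}, e_{hk}^{dg}$, and $f_{hkp}^{dg}$ if the reformulation is used. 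Treat every variable indexed by $v$ analogously. All constraints \eqref{eq:E1-Const1} - \eqref{E2-Const65} are either indexed separately per drone with identical coefficients, or aggregate over $d$ or $v$, as in \eqref{eq:E1-Const6}, \eqref{eq:Int-Const1} and \eqref{E2-Const9_0} - \eqref{E2-Const11}. The objective also only sums terms like $F^{LD}u^d$ with a common coefficient. Hence both feasibility and cost are preserved. Choose $\sigma$ to list used drones first; then \eqref{SB-1} and \eqref{SB-3} hold.

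\textbf{Stage 2 (trips of one drone).} This is the main obstacle, because trips are \emph{not} fully exchangeable: constraints \eqref{eq:E1-Const12} and \eqref{E2-Const36} link trip $g$ to all earlier trips $g''<g$. My plan is to reorder the performed trips of each drone by departure time $dt_0^{dg}$ (respectively $pt_h^{vg'}$) and assign them indices $1,2,\dots$ in that order. The unperformed trips get the remaining larger indices, with all their variables set to zero. The timing of unperformed trips needs care. For such a trip, the constraints \eqref{eq:E1-Const8} - \eqref{eq:E1-Const10} are inactive except through big-$M$ terms, and \eqref{eq:E1-Const11} is trivial. For \eqref{eq:E1-Const12}, I would set its $dt_0$ and $rt_0$ to the last performed return time plus $T_{bat}^{LD}$, since nothing upper-bounds these times when the trip is unused. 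For performed trips, I must check that \eqref{eq:E1-Const12} holds after sorting. In a feasible schedule, a single drone cannot have two overlapping trips: if $dt_0^{dg} < dt_0^{dg''}$, then the trip ordering required by \eqref{eq:E1-Const12} forces $dt_0^{dg''} \ge rt_0^{dg}+T_{bat}^{LD}$. So sorting by departure time yields a chronological sequence in which each trip departs after every earlier trip returns, and the constraints hold. I expect the main subtlety to be unused trip slots interleaved with used ones; the timing assignment just described handles this. The MD case is identical, with $pt_h, bt_h$ in place of $dt_0, rt_0$. Costs are unchanged because $C_{bat}$ multiplies $u^{dg}$ (or $\mu_h^{vg'}$) uniformly. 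After this stage, \eqref{SB-2} and \eqref{SB-4} hold, and so the optimal value is unchanged.

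Finally, for proper containment, I would exhibit a single feasible \textit{BaM} solution that violates one cut. With $|\mathcal D|\ge 2$, take any feasible solution that uses only LD $1$ and relabel it onto LD $2$. This gives $u^2=1, u^1=0$, which remains feasible for \textit{BaM} by Stage 1 and violates \eqref{SB-1}. Containment itself is trivial, since only constraints are added.
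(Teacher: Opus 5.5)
Your strategy is the paper's relabeling argument, and your Stage~2 supplies what the paper leaves out. The paper only swaps two LDs to justify \eqref{SB-1} and calls the remaining cuts ``similar.'' You correctly note that trip slots are coupled by \eqref{eq:E1-Const12} and \eqref{E2-Const36}. These constraints already put performed trips in chronological index order, so reindexing only has to squeeze out idle slots. The gap is in how you time the idle slots you move to the end. Suppose two or more unused trips each receive $dt_0 = rt_0 = R^* + T_{bat}^{LD}$. Then \eqref{eq:E1-Const12} between two of them requires $R^* + T_{bat}^{LD} \ge R^* + 2T_{bat}^{LD}$, which is false whenever $T_{bat}^{LD}>0$. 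The same failure occurs in \eqref{E2-Const36} for MDs. The fix is to give each unused trip departure $R^* + T_{bat}^{LD}$ (analogously for MDs) but the smallest return time admitted by \eqref{eq:E1-Const10} (resp.\ \eqref{E2-Const32}--\eqref{E2-Const33}). That value is essentially zero, because those constraints are relaxed by $M_1$ when the trip's arcs are zero, and consecutive idle slots then no longer conflict.

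Your justification ``nothing upper-bounds these times when the trip is unused'' is also false in the big-$M$ model. With $w_{ph}^{dg}=0$, \eqref{eq:Int-Const2} still imposes $at_h^{dg} \le \pi_p + M_1$ for every $p$. Combined with \eqref{eq:E1-Const8}, this gives $dt_0^{dg} \le \pi_p + 2M_1 - T_L^{LD} - T_U^{LD} - T_{0h}^{LD}$. Similarly, with $z_{hj}^{vg'}=0$, \eqref{E2-Const28} imposes $pt_h^{vg'} \le \delta_j + M_1 - T_L^{MD} - T_U^{MD} - T_{hj}^{MD}$ for every CL $j$. A performed trip that sat in the last slot of the original solution may carry an arbitrarily inflated $rt_0^{dg}$ or $bt_h^{vg'}$, since nothing bounds it there. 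Idle slots placed after it can then violate these caps. So before reindexing, lower the return times of performed trips to their minimal values; this only relaxes \eqref{eq:E1-Const12} and \eqref{E2-Const36}. You also need $M_1$ to exceed $\max_p L_p$ by roughly a trip duration plus the swap time, an assumption the paper makes only tacitly. With these two patches your argument is complete.

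Lastly, your strictness witness presupposes a feasible solution that uses only LD~1. It suffices that some feasible solution leaves an LD idle, or, via \eqref{SB-2}, a trip slot. Some nondegeneracy hypothesis of this kind is unavoidable: with $|\mathcal D| = |\mathcal G^{LD}| = |\mathcal V_h| = |\mathcal G^{MD}| = 1$ the cuts are vacuous and the inclusion is not proper.
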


\begin{proof}
    For the sake of simplicity, we only establish the proof for the symmetry-breaking constraints \eqref{SB-1} of the LDs in echelon 1. The proof for the other constraints follows a similar procedure. Let $F$ be the set of feasible solutions of the original problem without symmetry-breaking constraints (i.e., \textit{BaM} defined in Eqs. \eqref{eq:obj} - \eqref{E2-Const65}), and $F'$ as the set of feasible solutions after adding symmetry-breaking constraints \eqref{SB-1}. As multiple LDs with the same characteristics exist, there can be multiple optimal solutions that differ only in the LDs used. Let $d_1, d_2 \in \mathcal D$ be the indices of two LDs such that $d_1 < d_2$, and suppose there exist two optimal solutions $x_1, x_2 \in F$ where ($i$) in $x_1$, LD $d_1$ follows a route and delivery pattern while LD $d_2$ is not used, and ($ii$) in $x_2$, LD $d_2$ follows the same route and delivery pattern while $d_1$ is not used. As the two LDs share the same characteristics and have the same routing and delivery patterns, we conclude that $C(x_1) = C(x_2)$, i.e., both solutions result in the same objective value, creating redundant symmetry in the solution space. By enforcing the symmetry-breaking constraints \eqref{SB-1}, any solution $x_2$ where a higher-index LD is used but a lower-index LD is not, becomes infeasible. The remaining feasible solutions are only those where the LDs are used in an increasing order of indices, making $F' \subseteq F$. However, as the symmetry-breaking constraints \eqref{SB-1} do not eliminate distinct optimal solutions that involve different routing, delivery pattern, and the required number of LDs, but only remove the symmetric equivalents (i.e., differ only in the index of the LDs), the optimal solution and optimal objective value of the original problem remain unchanged. Therefore, symmetry-breaking constraints \eqref{SB-1} are valid inequalities that result in a reduction in the search space without affecting the optimal solution, improving the computational efficiency of the \textit{BaM}.  
\end{proof}

\renewcommand{\theprop}{\ref{prop:Last_Hub}}

\begin{prop}
    Let $\tilde{K}_h^{dg}:= \{p\in\mathcal P: \sum_{k\in\mathcal H \cup \{0\}} f_{khp}^{dg}=1\}$ and $\tilde{D}_h^{dg}:= \{p\in \tilde{K}_h^{dg}: w_{ph}^{dg}=1\}$ denote the set of packages carried and delivered to LH $h$, respectively, by LD-trip $dg$. Then, in every integer-feasible solution of the \textit{BaM}, the following two conditions hold: $(i)$ $\sum_{k\in\mathcal H} x_{hk}^{dg}=1 \Rightarrow \tilde{D}_h^{dg} \subset \tilde{K}_h^{dg}$; and $(ii)$ $\tilde{D}_h^{dg} = \tilde{K}_h^{dg} \Rightarrow \sum_{k\in\mathcal H} x_{hk}^{dg}=0,\, x_{hS}^{dg}=1,\, \sum_{k\in\mathcal H} \sum_{p\in\mathcal P} f_{hkp}^{dg}=0$.
\end{prop}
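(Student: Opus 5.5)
The plan is to work with an arbitrary integer-feasible solution of the \textit{BaM}, including the network flow reformulation \eqref{NF-1}--\eqref{NF-6}. Fix an LD-trip $dg$ and an LH $h$ that the trip visits, i.e., $r_h^{dg}=1$. The first step is to pin down the combinatorial structure of the trip. By \eqref{eq:E1-Const1}--\eqref{eq:E1-Const2}, the binary arcs $x^{dg}$ form a single path $0\to h_1\to\cdots\to h_m\to S$, possibly together with subtours among the LHs. The timing constraints \eqref{eq:E1-Const8}--\eqref{eq:E1-Const9}, with $T_U^{LD}+T_{hk}^{LD}>0$ and $M_1$ large, exclude such subtours in the usual MTZ way. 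So every visited $h$ has exactly one incoming arc from $\mathcal H\cup\{0\}$ and exactly one outgoing arc into $\mathcal H\cup\{S\}$.

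Next I would characterize the commodity flows. For each package $p$, \eqref{NF-1}--\eqref{NF-3} and integrality make $f_{\cdot\cdot p}^{dg}$ a unit flow along the trip path. It starts at $0$ whenever $p$ is assigned to the trip and leaves the path exactly at the hub $h_j$ with $w_{ph_j}^{dg}=1$; by \eqref{eq:E1-Const6} there is at most one such hub. Flow can never enter $S$: summing \eqref{NF-2} along the path gives that what leaves $0$ equals $\sum_h w_{ph}^{dg}$, so nothing remains for $S$. Hence package $p$ reaches $h$, giving $\sum_{k\in\mathcal H\cup\{0\}} f_{khp}^{dg}=1$, exactly when its delivery hub is $h$ or a later hub on the path. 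This gives the description $\tilde K_h^{dg}=\tilde D_h^{dg}\,\dot\cup\,\{p$ carried on from $h$ to a later hub$\}$. Applying \eqref{NF-2} at $h$ then yields the identity $\sum_k f_{hkp}^{dg}=\mathbf 1[p\in\tilde K_h^{dg}]-w_{ph}^{dg}$ for every $p$.

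Statement (i) follows from this. If $\sum_{k\in\mathcal H}x_{hk}^{dg}=1$, the outgoing arc $(h,k)$ of $h$ goes to an LH $k$. Constraint \eqref{NF-4} forces some $p$ with $f_{hkp}^{dg}=1$, and by the identity this $p$ lies in $\tilde K_h^{dg}$ but not in $\tilde D_h^{dg}$. So the inclusion $\tilde D_h^{dg}\subseteq\tilde K_h^{dg}$, which holds by definition, is strict.

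For (ii), assume $\tilde D_h^{dg}=\tilde K_h^{dg}$. Then the identity gives $\sum_k f_{hkp}^{dg}=0$ for all $p$, which is the flow part of the conclusion. If there were an arc $(h,k)$ with $k\in\mathcal H$ and $x_{hk}^{dg}=1$, \eqref{NF-4} would require positive flow on it, which is a contradiction. So $\sum_{k\in\mathcal H}x_{hk}^{dg}=0$, and since $r_h^{dg}=1$, \eqref{eq:E1-Const2} forces $x_{hS}^{dg}=1$.

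The hard part will be the case $r_h^{dg}=0$ in (ii). There both sets are empty, and the hypothesis of (ii) holds trivially while $x_{hS}^{dg}=0$. I expect to state (ii) for visited hubs, or equivalently for $\tilde K_h^{dg}\neq\emptyset$; this is the reading the paper uses in its cut \eqref{eq:Last_Hub_Cut}. I would also note the extra assumption that every visited hub receives at least one package or forwards one, which is guaranteed by \eqref{NF-4} on the incoming arc. Justifying subtour elimination from the time constraints is the other delicate step, and I would handle it by summing the time inequalities around a putative cycle to get $0\ge$ a positive quantity.
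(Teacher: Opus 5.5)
Your proposal is correct and follows the paper's argument: conservation \eqref{NF-2} at $h$ together with \eqref{NF-4} on the outgoing LH-arc gives (i), and vanishing outflow at $h$ together with \eqref{NF-4} and \eqref{eq:E1-Const2} gives (ii). Your caveat that (ii) requires $r_h^{dg}=1$ is a genuine correction that the paper's proof uses tacitly, since for an unvisited hub $\tilde K_h^{dg}=\tilde D_h^{dg}=\emptyset$ but $x_{hS}^{dg}=0$; on the other hand, your path and subtour analysis is unnecessary, because your identity $\sum_k f_{hkp}^{dg}=\mathbf 1[p\in\tilde K_h^{dg}]-w_{ph}^{dg}$ already follows from \eqref{NF-2}, \eqref{NF-3} and $\sum_{k'} x_{k'h}^{dg}=r_h^{dg}\le 1$.
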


\begin{proof}
    We begin the proof for the aforementioned condition ($i$). Suppose $h$ is not the last visited LH in the delivery sequence of LD-trip $dg$. By definition, this means there exists exactly one outbound arc from LH $h$ to another arbitrary LH, $k\in\mathcal H$ for this LD-trip, i.e., $\sum_{k\in\mathcal H}x_{hk}^{dg}=1$. From the flow conservation constraint~\eqref{NF-2}, the following two properties hold.
    \begin{enumerate}
       \item If package $p$ is delivered to LH $h$, the flow of package $p$ stops at LH $h$: $w_{ph}^{dg}=1 \Rightarrow \sum_{k\in\mathcal H} f_{hkp}^{dg}=0$.
        \item If package $p$ is carried to LH $h$ but not delivered to this LH, this package $p$ flows to another subsequent LH $k$: $ \sum_{k\in\mathcal H \cup \{0\}} f_{khp}^{dg}=1,\,w_{ph}^{dg}=0 \Rightarrow \sum_{k\in\mathcal H} f_{hkp}^{dg}=1$.
    \end{enumerate}
    As the drone departs LH $h$ to another LH $k$ (i.e., $\sum_{k\in\mathcal H}x_{hk}^{dg}=1$), constraints~\eqref{NF-4} enforce that at least one package must flow from LH $h$ to another subsequent LH $k$. Therefore, the following condition holds:
    \begin{equation*}
        \sum_{k\in\mathcal H}x_{hk}^{dg}=1 \Rightarrow \exists \, p\in\mathcal P: \sum_{k\in\mathcal H \cup \{0\}} f_{khp}^{dg}=1,\, \sum_{k\in\mathcal H} f_{hkp}^{dg}=1, \, w_{ph}^{dg}=0
    \end{equation*}
    This, implies that $p \in \tilde{K}_h^{dg}$, and $p \notin \tilde{D}_h^{dg}$. Therefore, we conclude: $\sum_{k\in\mathcal H}x_{hk}^{dg}=1 \Rightarrow \tilde{D}_h^{dg} \subset \tilde{K}_h^{dg}$.
    We now continue the proof for the aforementioned condition ($ii$). Suppose that all the packages carried to LH $h$ are delivered to this LH: $\tilde{D}_h^{dg} = \tilde{K}_h^{dg}$. This means that $w_{ph}^{dg}=1,\, \sum_{k\in\mathcal H}f_{hkp}^{dg}=0, \forall p\in \tilde{K}_h^{dg}$, and therefore, $\sum_{k\in\mathcal H} \sum_{p\in \tilde{K}_h^{dg}}f_{hkp}^{dg}=0$. As no package flows from LH $h$ to another subsequent LH $k\in\mathcal H$, the drone does not depart LH $h$ to any other LHs, as enforced by constraints~\eqref{NF-4}. Therefore, $\sum_{k\in\mathcal H} x_{hk}^{dg}=0$, which implies $\sum_{k\in\mathcal H}\sum_{p\in\mathcal P}f_{hkp}^{dg}=0$, represented by constraints~\eqref{NF-3}. Moreover, according to constraints~\eqref{eq:E1-Const2}, the only feasible outbound arc from LH $h$ is the return to the CD arc, which implies $x_{hS}^{dg}=1$, i.e., LH $h$ is the last visited LH in the delivery sequence of LD-trip $dg$. Therefore, we conclude that: $\tilde{D}_h^{dg} = \tilde{K}_h^{dg} \Rightarrow \sum_{k\in\mathcal H} x_{hk}^{dg}=0,\, x_{hS}^{dg}=1,\, \sum_{k\in\mathcal H} \sum_{p\in\mathcal P} f_{hkp}^{dg}=0$.
\end{proof}

\renewcommand{\theprop}{\ref{prop:Conservative}}

\begin{prop}
    Denoting $PU$ be the package weight unit in discretized demand, any solution obtained by Algorithm~\ref{alg:Heuristic} is a feasible solution to the \textit{HDDNM} problem restricted to the fixed delivery destinations (i.e., CLs) if the following conditions hold: $(i)$ The release times of all the packages are earlier than the departure time of the LDs in the simplified problem variant, i.e., $T_E \geq \max_{i\in\mathcal I} R_i$. $(ii)$ The discretized package weight quantities of the CLs in the simplified problem variant overestimate the actual package weights, i.e., $\hat{P}_i = PU \cdot \lceil \frac{P_i}{PU} \rceil$.
\end{prop}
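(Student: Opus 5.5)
The plan is to take an arbitrary output $(\beta_{ih}, \mathcal R^{MD}, \mathcal R^{LD})$ of Algorithm~\ref{alg:Heuristic}, computed with $T_E \geq \max_{i\in\mathcal I} R_i$ and $\hat{P}_i = PU \cdot \lceil P_i/PU \rceil$. From it we build an assignment of the \textit{BaM} variables for the CL-only instance, taking $\mathcal M = \mathcal L = \emptyset$ and setting all SD variables to zero. We then check each constraint block in turn. The construction is:
\begin{itemize}
\item Each pattern-path $q$ with $x_q > 0$ becomes $x_q$ separate single-trip LD-trips, each using its own LD index.
\item $x_{hk}^{dg}$ follows the hub sequence of $q$.
\item The departure time is $dt_0^{dg}=T_E$.
\item Packages are assigned so that $w_{ih}^{dg}=1$ only for LHs $h$ with $\beta_{ih}=1$ that the route visits. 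This is exactly the assignment that model~\eqref{MILP-LD} produces.
\item Each MD route in $\mathcal R^{MD}$ becomes an MD-trip $vg'$, with $pt_h^{vg'}$ equal to the departure time $\text{Dep}(T_h)$ found in Step~3.
\end{itemize}
The remaining variables ($\pi$, $\delta$, $pw$, $e$, $q$, $r$, $u$, $\mu$) are then fixed by their defining equalities.

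Echelon~1 is checked next.
\begin{itemize}
\item Flow balance \eqref{eq:E1-Const1}--\eqref{eq:E1-Const2} holds because each pattern-path is an elementary route from $0$ through the LHs to $S$.
\item Release constraints \eqref{eq:E1-Const11} follow from condition $(i)$, since $dt_0^{dg}=T_E\geq R_p$.
\item Constraint \eqref{eq:E1-Const12} is vacuous because every LD does a single trip.
\item For payload, the set covering constraint \eqref{SC-Const} together with the packing in model~\eqref{MILP-LD} gives $\sum_i P_i w_{ih}^{dg}\le W_h^q$.
\item Condition $(ii)$ gives $P_i\le \hat P_i$. So the actual load on every arc is at most the discretized load that the pattern-path was built for. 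The PWCC check \eqref{eq:E1-Const7} therefore carries over.
\item Because $E^{1,LD}_{hk}\geq 0$, the energy \eqref{eq:E1-Const18} is monotone in payload, so the battery check \eqref{eq:E1-Const19} also carries over.
\item Arrival times $at_h^{dg}$ are computed from $T_E$ along the path and match $AT_h^q\le\pi_h$. The $\pi_p$ are set via \eqref{eq:Int-Const2}.
\end{itemize}

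Echelon~2 is handled the same way.
\begin{itemize}
\item Each CL is assigned to exactly one LH by \eqref{Heuristic_Set_Partition} and is served by exactly one MD-trip, as produced by \textsc{TripAssignment}. This gives \eqref{E2-Const9_0}--\eqref{E2-Const10}.
\item Capacity \eqref{E2-Const21}, \eqref{E2-Const23} and energy \eqref{E2-Const46} hold because the greedy Algorithm~\ref{alg:Greedy} checked them with $\hat P_i \ge P_i$. The same monotonicity argument as in echelon~1 applies.
\item Packages are available at the LH before the MDs leave: $\pi_i\le\pi_h=\min \text{Dep}(T_h)\le pt_h^{vg'}$, which gives \eqref{E2-Const34}.
\item Single MD trips make \eqref{E2-Const36} vacuous.
\item \textsc{LatestDeparture} keeps every due time satisfied along the route, which gives \eqref{E2-Const28}, \eqref{E2-Const30} and \eqref{E2-Const37}. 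The 2-OPT post-processing in Algorithm~\ref{alg:Full_Heuristic} retains only permutations that preserve both properties.
\end{itemize}

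The main obstacle is the link between the LH due time $\pi_h$ and the individual package times $\pi_i$. A package's LD may reach $h$ later than other LDs serving the same hub, and each LD may visit several hubs. We would therefore argue hub by hub: every LD-trip that delivers to $h$ satisfies $AT_h^q\le\pi_h$. Then every $\pi_i$ with $\beta_{ih}=1$ is at most $\pi_h$, and so at most every MD departure time at $h$. This requires the assumption that MDs leave only after all LD deliveries to that hub are complete.

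A second subtlety is that the routes in model~\eqref{MILP-LD} must remain feasible with actual rather than discretized weights. Condition $(ii)$ handles this through the monotonicity of payload, energy and capacity in weight, which we invoke explicitly. With these two points in place, every \textit{BaM} constraint restricted to CLs holds, so the output is feasible.
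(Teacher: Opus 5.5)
Your proposal follows the same route as the paper. Release times come from condition $(i)$, package availability at the hub from $AT_h^q\le\pi_h\le\mathrm{Dep}(T_h)$, and payload and battery from $\hat P_i\ge P_i$ together with monotonicity of energy in payload. You are more explicit than the paper about building the \textit{BaM} variables and about the chain $\pi_i\le\pi_h\le pt_h^{vg'}$. There is, however, a genuine gap in the echelon-1 payload step, and the paper's proof makes the same leap.

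Algorithm~\ref{alg:Heuristic} returns only pattern-paths with aggregate per-hub amounts $W_h^q$. It contains no assignment of individual packages to LD-trips. Model~\eqref{MILP-LD}, which belongs to Algorithm~\ref{alg:Full_Heuristic}, is not guaranteed to be feasible. Constraint~\eqref{SC-Const} gives only the aggregate bound $\sum_q W_h^q x_q\ge s_h\ge\sum_{i:\beta_{ih}=1}P_i$, because the simplified variant treats the commodity as divisible across LD-trips. By contrast, \eqref{eq:E1-Const6} requires each package to ride on exactly one LD-trip. Whether the packages fit into the chosen copies is therefore a bin-packing question, and aggregate capacity does not settle it.

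A concrete counterexample: one hub, $PU=1$ kg, $P_{max}^{LD}=3$ kg, three CLs with $P_i=\hat P_i=2$ kg, and all battery, MD and due-time data non-binding. Then $s_h=6$, and the only set-covering optima are two copies of $0\to h\to S$ with $W_h^q=3$. Yet each LD-trip can carry only one 2-kg package, so \eqref{MILP-LD} is infeasible and the returned LD routes cannot deliver all three packages. A finer $PU$ does not help, since two copies with $W_h^q=3$ still cover $s_h=6$.

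The paper's proof asserts the same step (``when mapping the discretized commodity amounts to their respective actual packages \ldots'') without justification. So the statement needs an extra hypothesis, and you should supply one.

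One hypothesis that makes your argument go through is $PU\ge\max_{i\in\mathcal I}P_i$.
\begin{itemize}
\item Then $\hat P_i=PU$ for every $i$.
\item With the $W_h^q$ multiples of $PU$, each copy of $q$ offers $W_h^q/PU$ unit slots at $h$.
\item Constraint \eqref{SC-Const} guarantees at least as many slots as there are packages assigned to $h$.
\item Filling slots arbitrarily then gives $\sum_i P_i w_{ih}^{dg}\le W_h^q$.
\end{itemize}
From there your monotonicity argument for \eqref{eq:E1-Const7} and the LD battery constraint goes through. Alternatively, you can assume explicitly that \eqref{MILP-LD} is feasible for the returned pattern-paths.
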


\begin{proof}
    We show that, under conditions ($i$) and ($ii$), any solution obtained by Algorithm~\ref{alg:Heuristic} can be mapped to a feasible solution of the \textit{HDDNM} problem restricted to the fixed delivery destinations.

    (a) Time window feasibility: According to condition ($i$), no LD departs before all the packages are released at the CD. Therefore, any LD containing any subset of packages does not violate the package release times. This is equivalent to satisfying constraints~\eqref{eq:E1-Const11}, i.e., $dt_0^{d} \geq R_i \cdot \sum_{h\in\mathcal H} w_{ih}^{d}, \forall i\in\mathcal I,\, d\in\mathcal D$. Moreover, MD operations at each LH are scheduled only after all LD deliveries to that LH are completed, as ensured by $AT_h^p \leq \pi_h,\forall h\in\mathcal H$. Therefore, no MD departs an LH before the assigned packages are available at the corresponding LH. This is equivalent to satisfying constraints~\eqref{E2-Const34}, i.e., $pt_h^v \geq \pi_i - M_1 (1 - y_{ih}^v), \forall h\in\mathcal H,\, i\in\mathcal I,\, v\in\mathcal V_h$. Additionally, Algorithm~\ref{alg:Greedy} ensures that all the packages are delivered before their respective delivery due times. Therefore, constraints~\eqref{E2-Const37} are satisfied, i.e., $\delta_i \leq L_i,\forall i\in\mathcal I$.

    (b) PWCC and battery capacity feasibility: Algorithm~\ref{alg:Greedy} ensures that for each MD route $T$, $\sum_{i\in T} P_i \leq P_{max}^{MD}$, and $G_T \leq B_{max}^{MD} - B_{min}^{MD}$. Therefore, the feasibility of each MD route with respect to the PWCC and the battery capacity is trivial. Additionally, $\hat{P}_i = PU \cdot \lceil \frac{P_i}{PU} \rceil$ implies that $\hat{P}_i \geq P_i,\forall i\in\mathcal I$. Therefore, condition ($ii$) ensures that the required amount of commodity in each LH is overestimated, i.e., $s_h = \sum_{i\in\mathcal I, \beta_{ih}=1} \hat{P}_i \geq \sum_{i\in\mathcal I, \beta_{ih}=1} P_i$. As a result, when mapping the discretized commodity amounts to their respective actual packages, the total weight each LD delivers to each LH is greater than the actual package weights: $W_h^p \geq \sum_{i\in\mathcal I} P_i \cdot w_{ih}^d$. This implies that constraints~\eqref{eq:E1-Const7} are satisfied, which ensures the satisfaction of the PWCC of the LDs. Furthermore, as the energy consumption function of the LDs is non-decreasing with respect to package weight, replacing discretized commodity amounts by smaller actual package weights ($P_i \leq \hat{P}_i$) cannot violate battery constraints. Therefore, any route that is feasible with respect to the battery capacity of the LDs in the simplified problem variant remains feasible when using actual package weights. 
    
    We conclude that the solution to the simplified problem variant under conditions ($i$) and ($ii$) satisfies all constraints of the original problem for fixed CLs.
\end{proof}

\renewcommand{\theprop}{\ref{prop:Exact_Heuristic}}

\begin{prop}
    The solution obtained by Algorithm~\ref{alg:Full_Heuristic} is the exact optimal solution to the simplified variant of the \textit{HDDNM} problem if the following conditions hold: $(i)$ $\mathcal{H}_i \cap \mathcal{H}_j = \emptyset, \forall i,j \in \mathcal I$; $(ii)$ $L_i \gg T_E + \sum_{(h,k) \in \mathcal E} T_{hk}^{LD} + |\mathcal H|T_U^{LD} + T_L^{LD} + \sum_{(i,j) \in \mathcal A}T_{ij}^{MD} + P_N^{MD} T_U^{MD} + T_L^{MD}$; $(iii)$ $\max_{\mathcal{S} \subseteq \mathcal{I}, |\mathcal{S}| = P_N^{MD}} \left( \sum_{i \in \mathcal{S}} P_i \right) \leq P_{max}^{MD}$; and $(iv)$ $P_i/PU \in \mathbb{Z}^+, \forall i \in \mathcal I$ (i.e., $PU$ is the common divisor of all package weights).
\end{prop}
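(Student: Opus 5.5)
Under conditions $(i)$--$(iv)$, the simplified variant splits into independent pieces, and I would show that each step of Algorithm~\ref{alg:Full_Heuristic} solves its piece exactly. The objective of the simplified variant counts fleet size only (LDs plus MDs), so it suffices to show that no feasible solution uses fewer MDs or fewer LDs than the algorithm does.

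\textbf{Step 1: the assignment is forced.} Condition $(i)$, read as pairwise disjointness of the feasible hub sets $\mathcal H_i$, means each CL can be feasibly served from essentially one LH: any feasible assignment must have $\beta_{ih}=1$ only for $h\in\mathcal H_i$. In particular, the set-partitioning model~\eqref{Heuristic_Set_Partition} has a unique feasible solution, and every feasible solution of the simplified variant uses the same CL-to-LH assignment $\beta_{ih}$. This decouples the problem into one echelon-2 subproblem per LH and one echelon-1 problem whose hub demands $s_h$ are fixed. I expect to have to state the interpretation of $(i)$ explicitly, since literally $\mathcal H_i\cap\mathcal H_i=\mathcal H_i$, so the condition must be meant for $i\neq j$.

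\textbf{Step 2: echelon 2 is optimal.} Condition $(ii)$ makes every due time slack enough that any grouping of the CLs at an LH into MD trips of at most $P_N^{MD}$ stops meets $L_i$, even after the LD has toured all hubs. Condition $(iii)$ makes the PWCC never binding once at most $P_N^{MD}$ packages are carried. Each MD performs a single trip, so the minimum number of MDs at LH $h$ is $\lceil |\{i:\beta_{ih}=1\}|/P_N^{MD}\rceil$, and the greedy Algorithm~\ref{alg:Greedy}, which fills trips up to $P_N^{MD}$, attains this count. Two further points need checking:
\begin{itemize}
\item The intra-route 2-OPT only changes the order of stops within a route, so it leaves the MD count unchanged.
\item The battery constraint must also be non-binding. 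I would either absorb it into $(ii)$ or note that the restricted sets already guarantee feasibility of single CLs. This is the main obstacle: as stated, $(ii)$ bounds times, not energy, so I would add a remark that the battery restriction is assumed slack or is implied by the pre-processing.
\end{itemize}

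\textbf{Step 3: echelon 1 is optimal.} By Step 1 the demands $s_h$ are fixed across all feasible solutions. By $(iv)$, $\hat P_i=P_i$, so discretization loses nothing and $s_h$ is exactly the true demand. By $(ii)$, the due times $\pi_h$ derived in Steps 3--4 of Algorithm~\ref{alg:Heuristic} never exclude a pattern-path. Hence $\mathcal Q$ contains every feasible LD route-and-load pattern, and model~\eqref{Set_Covering}, solved exactly, returns the minimum number of LD trips, each of which is one LD under the single-trip assumption.

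\textbf{Step 4: packages can be assigned without extra LDs.} Model~\eqref{MILP-LD} then assigns the actual packages to the selected pattern-paths. Because $(iv)$ makes all weights integer multiples of $PU$, each $W_h^r$ can be filled exactly by packages. I would argue this by a greedy split, or by taking the covering solution as a witness and trimming the slack in the $\geq$ constraints. Model~\eqref{MILP-LD} is therefore feasible with $\sum_r y^r=|\tilde{\mathcal R}^{LD}|$, and no additional LDs are needed.

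Combining Steps 2--4, the algorithm's fleet size equals the lower bound on every feasible solution, which proves the proposition.
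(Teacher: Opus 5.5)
You follow the same three-part route as the paper's proof: the CL-to-LH assignment is forced by $(i)$, the greedy MD fleet is optimal under $(ii)$--$(iii)$, and echelon~1 is solved exactly via $(iv)$ and model~\eqref{Set_Covering}. However, your Step~4 has a genuine gap. Condition $(iv)$ only makes all loads integer multiples of $PU$. Packing indivisible packages into the amounts $W_h^r$ chosen by the cover is still a bin-packing problem, and neither a greedy split nor trimming slack repairs fragmentation.

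Here is a concrete failure. Take three LHs, each the unique feasible hub of one CL with $P_i=2PU$, let $P_{max}^{LD}=3PU$, and make time, MD capacity and batteries slack. The patterns $(2,1,0)$ on $0\to h_1\to h_2\to S$ and $(0,1,2)$ on $0\to h_2\to h_3\to S$ form an optimal cover with two LDs. Yet no LD can carry two whole packages, so model~\eqref{MILP-LD} is infeasible on these routes, and indeed on any two routes. Every unsplit solution needs three LDs.

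Hence $(i)$--$(iv)$ cannot close Step~4. You need either an extra hypothesis that makes every optimal cover realizable by whole packages (e.g., $P_i=PU$ for all $i$), or you must state the claim for the single-commodity output of model~\eqref{Set_Covering} rather than for the package assignment of model~\eqref{MILP-LD}. The paper's part (c) gives no help here: it simply asserts that exactness of model~\eqref{Set_Covering} makes model~\eqref{MILP-LD} optimal, without ever addressing the latter's feasibility.

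Two smaller points. First, in Step~1 your repair of $(i)$ to $i\neq j$ does not yield the uniqueness you then claim. Disjointness of $\mathcal H_i$ and $\mathcal H_j$ for distinct CLs says no LH serves two CLs; it does not make each $\mathcal H_i$ a singleton. Suppose $\mathcal H_i=\{h_1,h_2\}$, $\mathcal H_j=\{h_3\}$, and $T_{h_1 i}^{MD}<T_{h_2 i}^{MD}$, so model~\eqref{Heuristic_Set_Partition} picks $h_1$. If one LD battery suffices for a route through $h_2$ and $h_3$ but not for any route through $h_1$ and $h_3$, the heuristic uses two LDs where one suffices. What your Step~1 and the paper's part (a) (``each CL is feasibly reachable from exactly one LH'') actually use is $|\mathcal H_i|=1$. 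Equivalently, the service areas $\{i\in\mathcal I: h\in\mathcal H_i\}$ of distinct LHs are disjoint, so state $(i)$ that way.

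Second, in Step~2 you are right that $(ii)$--$(iii)$ do not control MD battery use, which the paper's part (b) ignores. But the restricted sets cannot supply it, since they only certify single-stop round trips $h\to i\to h'$, not multi-stop circular routes. This must be an explicit added hypothesis.
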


\begin{proof}
    To prove the optimality of Algorithm~\ref{alg:Full_Heuristic} under conditions ($i$) to ($iv$), we show that (1) the decoupling of echelons in Step 1 of Algorithm~\ref{alg:Heuristic} does not remove the optimal solution from the search space, and (2) the decisions made in each sequential step of Algorithm~\ref{alg:Full_Heuristic} are optimal solutions.

    (a) Optimality of CL to LH assignment: According to condition ($i$), each CL is feasibly reachable from exactly one LH. This ensures that the decoupling in Step 1 of Algorithm~\ref{alg:Heuristic} (i.e., set partitioning model~\eqref{Heuristic_Set_Partition}) determines the same assignment of CLs to LHs compared to the assignment obtained from the exact solution of the simplified variant of the \textit{HDDNM} problem. 

    (b) Optimality of routing and scheduling MDs in echelon 2: In echelon 2, the goal is to minimize the fleet size of MDs. Condition ($ii$) represents that the delivery due times of CLs are sufficiently large such that any permutation of CLs can be served by a single MD without violating delivery due times. Furthermore, condition ($iii$) represents that the total weight of any set of $P_N^{MD}$ packages does not exceed the PWCC of the MDs. This ensures that PWCC is not a binding restriction for the MDs. Therefore, the greedy assignment of CLs to the MDs in greedy Algorithm~\ref{alg:Greedy} in Appendix~\ref{Appendix_Algorithms} is mathematically guaranteed to determine the minimum required number of MDs. Consequently, these two conditions result in the optimal routing and scheduling of MDs in echelon 2.

    (c) Optimality of routing and scheduling LDs in echelon 1: According to condition ($iv$), the package weight unit $PU$ is chosen such that the discretized package weights $\hat{P}_i$ are exactly equal to the actual package weights $P_i$. This ensures that the discretization of package weights is no longer a conservative overestimation for echelon 1. As the discretized echelon 1 routing and scheduling problem is solved using an exact set covering model~\eqref{Set_Covering} with full enumeration of pattern-paths, the solution obtained by solving model~\eqref{MILP-LD} provides the optimal routing and scheduling of LDs in echelon 1.

    As each decoupled component in \textit{M-H} (CL-to-LH assignment, MD routing and scheduling, and LD routing and scheduling) achieves its optimal solution under conditions $(i)-(iv)$, and each CL has only one feasible LH, we conclude that under conditions $(i)-(iv)$, the combined solution $(\mathcal{R}^{LD}, \mathcal{R}^{MD})$ obtained by Algorithm~\ref{alg:Full_Heuristic} is the optimal solution for the simplified variant of the \textit{HDDNM} problem (discussed in Section~\ref{subsec:Heuristic}).
\end{proof}

\setcounter{equation}{0}
\setcounter{figure}{0}
\setcounter{table}{0}
\setcounter{algorithm}{0}
\section{Detailed Explanation of the Restricted Sets}
\label{Appendix_Restricted_Sets}

For the CLs, $\mathcal I_h^{a} = \{ i\in\mathcal I: P_i \leq P_{max}^{a}, \; 2E_{ih}^{0, a} + E_{ih}^{1, a} P_i \leq B_{max}^{a} - B_{min}^{a}, \; T_L^{a}+T_{hi}^{a}+T_U^{a}+T_L^{LD}+T_{0h}^{LD}+T_U^{LD} \leq L_i - R_i\}$. Here, $P_i \leq P_{max}^{a}$ restricts the set $\mathcal I_h^{a}$ by ensuring that drone type $a$ is capable of carrying the package weight for CL $i$. The condition $2E_{ih}^{0, a} + E_{ih}^{1, a} P_i \leq B_{max}^{a} - B_{min}^{a}$ ensures that a drone of type $a$ can safely (i.e., without exceeding the battery capacity) deliver package $i$ from LH $h$ to CL $i$ and return. Furthermore, $T_L^{a}+T_{hi}^{a}+T_U^{a}+T_L^{LD}+T_{0h}^{LD}+T_U^{LD} \leq L_i - R_i$ ensures that the minimum travel time to pick up package $i$ from the CD and deliver it to the corresponding CL via LH $h$ does not exceed the time window (i.e., $L_i - R_i$). Unlike CLs that have fixed locations and pre-defined packages, the allocation of the AMBs to the ALs is decided endogenously within the optimization routine. Therefore, we adopt an optimistic feasibility check for the ALs, avoiding the exclusion of any AL that can be served under at least one admissible AMB allocation. Accordingly, we define $\mathcal L_h^{a} = \{ l\in\mathcal L: \bar{P}_m \leq P_{max}^{a}, \; 2E_{lh}^{0, a} + E_{lh}^{1, a} \bar{P}_m \leq B_{max}^{a} - B_{min}^{a}, \; T_L^{a}+T_{hl}^{a}+T_U^{a}+T_L^{LD}+T_{0h}^{LD}+T_U^{LD} \leq \max_{m\in\mathcal M} (L_m - R_m)\}$, where $\bar{P}_m = \min_{m\in\mathcal M}\{P_m\}$.

\setcounter{equation}{0}
\setcounter{figure}{0}
\setcounter{table}{0}
\setcounter{algorithm}{0}
\section{Variable Fixing for Infeasible Hub Origins}
\label{Appendix_VF}

Constraints \eqref{VF-HO-CL-1} - \eqref{VF-HO-CL-4}, and \eqref{VF-HO-AL-1} - \eqref{VF-HO-AL-4} represent the variable fixings for the CLs and the ALs, respectively.

\begin{align}
    & r_{ih}^t, s_{ih}^t, \rho_{ih}^t=0, \quad \forall h\in\mathcal H,\, t\in\mathcal T_h,\, i \notin \mathcal I_h^t \label{VF-HO-CL-1}\\
    & \theta_{ij}^t= 0, \quad \forall h\in\mathcal H,\, t\in\mathcal T_h,\, (i,j) \notin \mathcal A_h^t \label{VF-HO-CL-2}\\
    & y_{ih}^{vg'}=0, \quad \forall h\in\mathcal H,\, v\in\mathcal V_h, \, g'\in\mathcal G^{MD},\, i \notin \mathcal I_h^{MD} \label{VF-HO-CL-3}\\
    & z_{ij}^{vg'}, pw_{ij}^{vg'}, q_{ij}^{vg'} =0, \quad \forall h\in\mathcal H,\, v\in\mathcal V_h, \, g'\in\mathcal G^{MD},\, (i,j) \notin \mathcal A_h^{MD} \label{VF-HO-CL-4}\\
    & r_{lh}^t, s_{lh}^t, \rho_{lh}^t=0, \quad \forall h\in\mathcal H,\, t\in\mathcal T_h,\, l \notin \mathcal L_h^t \label{VF-HO-AL-1}\\
    & \theta_{lo}^t= 0, \quad \forall h\in\mathcal H,\, t\in\mathcal T_h,\, (l, o) \notin \mathcal B_h^t \label{VF-HO-AL-2}\\
    & y_{lh}^{vg'}=0, \quad \forall h\in\mathcal H,\, v\in\mathcal V_h, \, g'\in\mathcal G^{MD},\, l \notin \mathcal L_h^{MD} \label{VF-HO-AL-3}\\
    & z_{lo}^{vg'}, pw_{lo}^{vg'}, q_{lo}^{vg'} =0, \quad \forall h\in\mathcal H,\, v\in\mathcal V_h, \, g'\in\mathcal G^{MD},\, (l, o) \notin \mathcal B_h^{MD} \label{VF-HO-AL-4} 
\end{align}

\setcounter{equation}{0}
\setcounter{figure}{0}
\setcounter{table}{0}
\setcounter{algorithm}{0}
\section{Algorithms}
\label{Appendix_Algorithms}

\begin{algorithm}[H]
\caption{Infeasibility of a given path of a CL pair by a given drone type from a given LH.}
\label{alg:Feasibility_Check}
\begin{algorithmic}[1]
\Require Partial $path = (v_1, v_2)$, LH $h$, drone type $a\in \{\mathcal T_h\cup MD\}$
\Ensure Infeasibility status
\Procedure{InfeasibilityCheck}{$path, a, h$}
\State $Pick_{v_i} = R_{v_i} + TL^{LD} + T_{0h}^{LD} + T_U^{LD}, \quad \forall i=1, 2$ \Comment{Earliest possible arrival to LH $h$}
\If{$a=MD$}
    \State{\textbf{Step 1: Time Check}}
    \State $f_{v_0} \gets \max \left\{Pick_{v_1}, Pick_{v_2}\right\} + T_L^{MD}$
    \Comment{Departure time from LH $h$}
    \State $f_{v_1} \gets f_{v_0} + T_{h,v_1}^{MD} + T_U^{MD}$ \Comment{Earliest possible delivery time for $v_1$}
    \State $f_{v_2} \gets f_{v_1} + T_{v_1,v_2}^{MD} + T_U^{MD}$ \Comment{Earliest possible delivery time for $v_2$}
    \State{\textbf{Step 2: Energy Check}}
    \State $G \gets B_{max}^{MD}$ \Comment{Initial fully charged battery}
    \State $G \gets G - E_{h v_1}^{0, MD} - E_{h v_1}^{1,MD}(P_{v_1} + P_{v_2})$ \Comment{Battery energy after first delivery}
    \State $G \gets G - E_{h v_2}^{0, MD} - E_{h v_2}^{1,MD}(P_{v_2})$ \Comment{Battery energy after second delivery}
    \State $G \gets G - E_{v_2 h}^{0, MD}$ \Comment{Battery energy after returning to LH $h$}
    \If{$f_{v_1} \leq L_{v_1}$ and $f_{v_2} \leq L_{v_2}$ and $G \geq B_{min}^{MD}$}
        \State \Return Feasible
    \Else
        \State \Return Infeasible
    \EndIf
\Else
    \State $n_{v_1} \gets T_L^{t} + Pick_{v_1}$ \Comment{Earliest possible pickup time for $v_1$}
    \State $G_{v_1} \gets B_{max}^{t} - 2E_{h v_1}^{0, t} - E_{h v_1}^{1, t} P_{v_1}$
    \Comment{Battery energy after first delivery}
    \State $s_{v_1} \gets 0$
    \Comment{No battery swap for first delivery}
    \State $f_{v_1} \gets n_{v_1} + T_{h,v_1}^t + T_U^t$ \Comment{Earliest possible delivery time for $v_1$}
    \State $G_{v_2} \gets G_{v_1} - 2E_{h v_2}^{0, t} - E_{h v_2}^{1, t} P_{v_2}$
    \If{$G_{v_2} < B_{min}^{t}$} \Comment{Endogenous battery swap for $v_2$}
        \State $s_{v_2} \gets 1$
        \State $G_{v_{2}} \gets B_{max}^{t} - 2E_{h v_2}^{0, t} - E_{h v_2}^{1, t} P_{v_2}$
    \Else
        \State $s_{v_2} \gets 0$
    \EndIf
    \State $n_{v_2} \gets T_L^t + s_{v_2} T_{bat}^t + \max \left\{ Pick_{v_2} , f_{v_1} + T_{v_1,h}^t \right\}$
    \State $f_{v_2} \gets n_{v_2} + T_{h,v_{2}}^t + T_U^t$ \Comment{Earliest possible delivery time for $v_2$}
    \If{$f_{v_1} \leq L_{v_1}$ and $f_{v_2} \leq L_{v_2}$}
        \State \Return Feasible
    \Else
        \State \Return Infeasible
    \EndIf
\EndIf
\EndProcedure
\end{algorithmic}
\end{algorithm}

\begin{algorithm}[H]
\caption{Heuristic separation of capacity cuts.}
\label{alg:Capacity_Cut}
\begin{algorithmic}[1]
\Require Fractional solution $sol$, threshold $\epsilon$
\Ensure Added cut
\Procedure{CapacityCut}{$sol$}
\State Determine the values of $w^*, u^*$ from the solution $sol$
\For{$d\in\mathcal D$}
    \For{$g\in\mathcal G^{LD}$}
        \If{$u^{*dg} > \epsilon$}
            \State $C^{dg} \gets \{p\in\mathcal P: \sum_{h\in\mathcal H}w_{ph}^{*dg} > \epsilon\}$
            \If{$\sum_{p\in C^{dg}} P_p > PW_{max}^{LD}$}
                \If{$\sum_{h\in\mathcal H} \sum_{p\in C} w_{ph}^{*dg} > u^{*dg}\Big(|C^{dg}|-1\Big)$}
                    \State Add cut~\eqref{Cap_Cut} for $C^{dg}$ 
                \EndIf
            \EndIf
        \EndIf
    \EndFor
\EndFor
\EndProcedure
\end{algorithmic}
\end{algorithm}

\begin{algorithm}[H]
\caption{Heuristic separation of last hub cuts.}
\label{alg:LastHubCut}
\begin{algorithmic}[1]
\Require Fractional solution $sol$, threshold $\epsilon$
\Ensure Added cut
\Procedure{LastHubCut}{$sol$}
    \State Determine the values of $w^*, x^*, f^*, u^*, r^*$ from the solution $sol$
    \For{$d \in \mathcal D$}
        \For{$g \in \mathcal G^{LD}$}
            \If{$u^{* dg} > \epsilon$}
                \For{$h \in \mathcal H$}
                    \If{$r_h^{* dg} > \epsilon$}
                        \State $K_h^{dg} \gets \{p \in \mathcal P : \sum_{k \in \mathcal H \cup \{0\}} f_{khp}^{*dg} > \epsilon\}$
                        \If{$K_h^{dg} \neq \emptyset$}
                            \State $D_h^{dg} \gets \{p \in K_h^{dg} : w_{ph}^{*dg} > \epsilon\}$
                            \If{$\Big(\sum_{k\in\mathcal H}x_{hk}^{* dg} > \epsilon\Big)$ $\land$ $\Big((\nexists \, p\in K_h^{dg}:  w_{ph}^{* dg} > \epsilon)$ $\lor$ ($D_h^{dg}=K_h^{dg})\Big)$}
                                \State \textbf{Violation} $\gets$ \textbf{TRUE}
                            \Else
                                \If{$\Big(x_{hS}^{* dg} > \epsilon \Big)$ $\land$ $\Big( (\nexists \, p\in K_h^{dg}:  w_{ph}^{* dg} > \epsilon)$ $\lor$ $(D_h^{dg} \subset K_h^{dg}) \Big)$}
                                    \State \textbf{Violation} $\gets$ \textbf{TRUE}
                                \Else
                                    \State \textbf{Violation} $\gets$ \textbf{FALSE}
                                \EndIf
                            \EndIf
                            \If{\textbf{Violation}}
                                \If{$\sum_{k : (k,h) \in \mathcal A} \sum_{p \in K_h^{dg}} f_{khp}^{*dg} - \sum_{p \in D_h^{dg}} w_{ph}^{* dg} < \sum_{k \in \mathcal H} x_{hk}^{* dg}$}
                                    \State Add cut~\eqref{eq:Last_Hub_Cut} for $K_h^{dg}$ and $D_h^{dg}$
                                \EndIf
                            \EndIf
                        \EndIf
                    \EndIf
                \EndFor
            \EndIf
        \EndFor
    \EndFor
\EndProcedure
\end{algorithmic}
\end{algorithm}

\begin{algorithm}[H]
\caption{Fast greedy heuristic for single hub fleet minimization.}
\label{alg:Greedy}
\begin{algorithmic}[1]
\Require LH $h$, $\beta_{ih}$
\Ensure $ActiveTrips_h$
\Procedure{TripAssignment}{$h, \beta_{ih}$}
    \State $CL_h \gets \{i: \beta_{ih}=1\}$
    \State $SCL_h \gets$ Sort all CLs in $CL_h$ in an ascending order of $L_i$
    \State $Trips_h = \{T_1^h=\emptyset, T_2^h=\emptyset, ..., T_{|CL_h|}^h=\emptyset\}$ \Comment{All trips have no assigned CL}
    \State $pw_T = 0, \forall T\in Trips_h$ 
    \For {$i\in SCL_h$}
        \For {$T \in Trips_h$}
            \If{$(pw_T+ P_i \leq P_{max}^{MD}) \land (|T| < P_N^{MD})$}
                \If{$\Call{Energy}{T, i, h}$ and $\Call{TimeWindow}{T, i, h}$}
                    \State $T \gets \{T, i\}$ and $pw_T \gets pw_T + P_i$
                    \State Break the inner for loop
                \EndIf
            \EndIf
        \EndFor
    \EndFor
    \State \Return{$ActiveTrips_h \gets \{T_j^h: j=1, \cdots, |CL_h|; T_j^h \neq \emptyset\}$}
\EndProcedure
\Procedure{Energy}{$T, i, h$}
    \State $Seq = (v_0, v_1, \cdots, v_m, v_{m+1}):\{h, T, i, h\}$
    \State $P_{m+1} \gets 0$ \Comment{$m+1$ is the dummy sink}
    \State $G \gets \sum_{j=0}^{m} E_{v_j, v_{j+1}}^{0, MD} + E_{v_j, v_{j+1}}^{1, MD} \cdot \sum_{k=j+1}^{m+1} P_{v_k}$
    \If{$G \leq B_{max}^{MD} - B_{min}^{MD}$}
        \State \Return \textbf{TRUE}
    \Else
        \State \Return \textbf{FALSE}
    \EndIf
\EndProcedure
\Procedure{TimeWindow}{$T, i, h$}
    \State $Seq = (v_0, v_1, \cdots, v_m, v_{m+1}):\{h, T, i, h\}$
    \State $Dep \gets T_L^{LD} + T_{0h}^{LD} + T_U^{LD} + T_L^{MD}$ 
    \State \textbf{Status:} $\gets$ \textbf{Feasible}
    \For{$j\in \{1, \cdots, m \}$}
        \State $f_{v_j} \gets Dep + \sum_{k=0}^{j-1} T_{v_k, v_{k+1}} + T_U^{MD} \cdot \sum_{i'=1}^j i'$
        \If{$f_{v_j} > L_{v_j}$}
            \State \textbf{Status} $\gets$ \textbf{Infeasible}
        \EndIf
    \EndFor
    \If{\textbf{Status} $=$ \textbf{Feasible}}
        \State \Return \textbf{TRUE}
    \Else
        \State \Return \textbf{FALSE}
    \EndIf
\EndProcedure
\end{algorithmic}
\end{algorithm}

\begin{algorithm}[H]
\caption{Latest feasible departure time for a single MD-trip.}
\label{alg:Late_Departure}
\begin{algorithmic}[1]
\Require Trip $T_h = (i_1, i_2, \ldots, i_k)$ from LH $h$, due times $L_i$, travel times $T_{ij}^{MD}$
\Ensure $\text{Dep}(T_h)$: latest feasible departure time for the trip

\Procedure{LatestDeparture}{$T_h$}

    \State $t \gets T_L^{MD}$ 
    \State $latest\_dep \gets +\infty$
    \State $prev \gets h$ 

    \For{$i$ in $T_h$}
        \State $t \gets t + T_{prev,i}^{MD} + T_U^{MD}$ 
        \State $latest\_dep \gets \min(latest\_dep,\; L_i - t)$
        \State $prev \gets i$
    \EndFor

    \State \Return $latest\_dep$

\EndProcedure
\end{algorithmic}
\end{algorithm}

\setcounter{equation}{0}
\setcounter{figure}{0}
\setcounter{table}{0}
\setcounter{algorithm}{0}
\section{Exact Separation for Capacity Cuts}
\label{Appendix_Separation}

Let $w_{ph}^{*dg}$ and $u^{*dg}$ denote the optimal values obtained for each LD-trip $dg$ in a given fractional solution. Using these values, we determine whether there exists a subset of packages that: ($i$) exceeds the PWCC of the LDs, and ($ii$) violates the capacity cut~\eqref{Cap_Cut} for the LD-trip $dg$. This separation procedure requires solving model~\eqref{Exact_Cap_Cut_Separation} to find the maximally violated subset, if one exists.\vspace{-10pt}

\begin{subequations}
\begin{align}
    \max \quad & V^{dg}(\mathbf{z}) = \sum_{p \in \mathcal P} \left( \sum_{h \in \mathcal H} w_{ph}^{*dg} \right) z_p - u^{*dg} \cdot \left( \sum_{p \in \mathcal P} z_p - 1 \right) \label{Ex_CC_Obj}\\
    \text{s.t.} \quad & \sum_{p \in \mathcal P} P_p \cdot z_p > PW_{max}^{LD}  \label{Ex_CC_Cnst}\\
    & z_p \in \{0, 1\}
\end{align}\label{Exact_Cap_Cut_Separation}
\end{subequations}

In model~\eqref{Exact_Cap_Cut_Separation}, $z_p$ is a binary variable representing whether package $p$ is included in the potential subset. The objective function~\eqref{Ex_CC_Obj} maximizes the violation of the capacity cut~\eqref{Cap_Cut}, while constraints~\eqref{Ex_CC_Cnst} ensure the violation of the PWCC of the LDs. A violated capacity cut exists, if and only if, the optimal objective value of the model~\eqref{Exact_Cap_Cut_Separation} is non-negative, i.e., $V^{*dg}(\mathbf{z}) > 0$. Then the maximally violated subset of the packages is determined as $C^{dg} = \{p: z_p^* = 1 \}$. We then update $\tilde{\mathcal C}$ by adding all detected maximally violated subsets, and add the corresponding capacity cut~\eqref{Cap_Cut} to the \textit{BaM}.

\end{document}